\let\its\itshape
\let\ss\smallskip
\let\ms\medskip
\let\noi\noindent
\newcommand\mni{\ms\noi}
\newcommand\ie{i.e., }
\newcommand\eg{e.g., }
\let\sc\scriptstyle
\newcommand\B{\mathbb{B}}
\newcommand\N{\mathbb{N}}
\newcommand\Z{\mathbb{Z}}
\newcommand\Q{\mathbb{Q}}
\newcommand\R{\mathbb{R}}
\newcommand\F{\mathbb{F}}
\renewcommand\k{\mathbf{k}}
\newcommand\gl{\mathbf{l}}
\newcommand\A{\mathbb{A}}
\renewcommand\Im{\mathrm{Im}}
\def\I{\mathrm{Id}}
\newcommand\Rad{\mathrm{Rad}}
\def\m{\mathfrak{m}}
\def\fp{\mathfrak{p}}
\def\fa{\mathfrak{a}}
\def\S{\mathfrak{S}}
\newcommand\Dk{D_\k}
\newcommand\X{\mathbf{X}}
\newcommand\x{\mathbf{x}}
\def\tq{\>:\>}
\def\ou{\mathrel{\vee}}
\let\dans\longrightarrow
\newcommand\ov{\overline}
\renewcommand\l{\left}
\renewcommand\r{\right}
\newcommand\la{\l\langle}
\newcommand\ra{\r\rangle}
\newcommand\flr[1]{\l/\la #1\ra\r.}
\let\epsilon\varepsilon
\newtheorem{theorem}{Theorem}[section]
\newtheorem{proposition}[theorem]{Proposition}
\newtheorem{defi}[theorem]{Definition}
\newtheorem{example}[theorem]{Example}
\newtheorem{lemma}[theorem]{Lemma}
\newtheorem{coro}[theorem]{Corollary}
\newtheorem{context}{Context}[section]
\begin{document}
\title[Elementary Constructive Theory of Henselian Local Rings]
{Elementary Constructive Theory\\ of Henselian Local Rings}
\author{M. Emilia Alonso Garcia}
\thanks{Universitad Complutense, Madrid, Espa\~na. {\tt mariemi@mat.ucm.es}}
\author{Henri Lombardi}
\thanks{Univ. Marie et Louis Pasteur, 25030
Besan\c{c}on cedex, France. {\tt henri.lombardi@univ-fcomte.fr}
\url{http://hlombardi.free.fr/}}
\author{Hervé Perdry}
\thanks{Université Paris-Saclay, UVSQ, Inserm, CESP, 16 av PV Couturier, 94807,
Villejuif, France. {\tt herve.perdry@universite-paris-saclay.fr} }
\date{May 2006, revised August 2025}
\maketitle

This is the arXiv version, arXiv:2202.06595, written in August 2025. 

This paper appeared as 

Mar\'{\i}a Emilia {Alonso Garc\'{\i}a}, Henri {Lombardi} and  Hervé {Perdry}. {Elementary constructive theory of {H}enselian local rings}.
p.\ 253--271 in
 {\it MLQ. Mathematical Logic Quarterly},
    Vol {54}, No~3. (2008).
    
Here we have fixed the reference for the book \cite{LafonMarot} and for the Cayley-Hamilton Theorem in Lemma \ref{charpol}, and we have added the reference \cite{CACM}.

In this version, we use the terminology found in the book \cite{CACM} which appeared after the publication of the paper.\\ 
$\bullet$ The Jacobson radical of the ring $A$ is denoted as $\mathrm{Rad}(A)$.\\
$\bullet$ A ``pseudo-local ring'' is now called a ``residually zero-dimensional ring''.

Note that ``semi-local ring'' here is a particular case of ``strict semi-local ring'' described in exercices IX-8 and IX-9 of \cite{CACM}.

Moreover, in order to make easier to understand the structure of the paper, we have numbered the various {\bf contexts}, and we have indicated the corresponding context for important results.

A table of contents is given at the end of the paper.      


\medskip \centerline{-----------------------------------}  

\begin{abstract}
    We give an elementary theory of Henselian local rings and
    construct the Henselization of a local ring.  All our theorems
    have an algorithmic content.
\end{abstract}

\medskip\noindent Key words: Henselian local ring, Henselization, strict Henselian local ring, 
Constructive mathematics

\medskip\noindent MSC 2000: 3J15, 03F55


\section{Introduction}

We give an elementary theory of Henselian local rings.  The paper is
written in the style of Bishop's constructive mathematics,
\ie mathematics with intuitionistic logic (see \cite{BB85,BR1987,MRR}. In this frame  we do not assume any constraint of the kind 
``explicit means Turing computable''. So that, our proofs work as well inside classical mathematics; it is sufficient to assume that ``explicit'' is a void word. However if the hypotheses are ``Turing computable'', so are the conclusions. In this sense we claim  that our proofs  have always an algorithmic content.

Through this paper, when we say: ``Let \(R\) be a ring \ldots '', this means that:\\
(1)~we know how 
to construct elements of \(R\) (from now on  called   {\it canonical elements}), (2)~we have given  \(1_R\), \(0_R\),  \(-1_R\), constructed according to (1),  (3)~we know how 
to construct \(x+y\) and \(xy\) according to (1),  when the objects $x$ and $y$ are given through the same construction, (4)~we know what is the meaning of
\(x=_{R}y\) when \(x\) and \(y\) are 
elements of \(R\) given through the construction (1), and (5)~we have constructive proofs
showing that the axioms of rings are satisfied by this structure.

Hence, \(\Z\), \(\Q\), \(\R\), and all usual rings are rings in the constructive meaning of the word already explained. Notice that (4) does not imply to have given a constructive test of equality for canonical elements. 
A ring \(R\) is said \emph{discrete} when we have, constructively, 
for any  elements \(x\) and \(y\)  of \(E\): \(x=_{E}y\) or \(\lnot(x=_{E}y)\).
So \(\R\) is \emph{not} discrete. If it were the
case, this would imply the following so-called {\em limited principle of
omniscience}:
\[ \mathbf{(LPO)}\ \ \forall \alpha\in\{0,1\}^\N,\ 
  (\exists n,\ \alpha_n=1)\ou (\forall n,\ \alpha_n=0) \]
which is considered to be not acceptable in constructive mathematics.
For more details, we refer the reader to \cite{BR1987,MRR}.

Of course, in classical mathematics, all 
constructive theorems
about discrete fields apply to~$\R$ because
it becomes discrete if we assume $\mathbf{(LPO)}$.

On the other hand \(\Z\) is a discrete ring, even if 
for ``non-canonical''elements of \(\Z\) it is impossible to decide
the equality (\eg equality between \(x\) and \(y\) where \(x=0\) and \(y=0\) if ZF is consistent, and \(y=1\) in the 
other case).

Many classical definitions have to be rewritten in a suitable
form  to fit well in our constructive setting. 
E.g., a \emph{local ring}, will be a ring \(A\) such that
\[\forall x\in A,\ x\in A^\times\hbox{\ or\ }(1+x)\in A^\times.\]
Precisely, this will means that, for any canonical \(x\in
A\) we can either construct  \(y\) such that \(xy=1\), or
construct  \(y\) such that \((1+x)y=1\), with an explicit meaning for the ``or''.

This construction is not required to be ``extensional'': two 
(canonical) 
elements \(x\) and \(x'\) of \(A\) which are equal in \(A\), need not give the 
same branch of the alternative. Typically, \(\R\) is a local ring in 
which there \emph{cannot} exist an extensional way of satisfying the axiom of local rings.

\medskip The theory of Henselian rings was developed by Nagata
(see \cite{Nagata62}). He introduced the notion  of  Henselization of a local ring based on the case of integrally closed domains.
Namely, for an  integrally closed domain $(R,\m)$, let $R'$ the separable integral closure  of $R$ inside an algebraic closure of its quotient field. Let  $\fp'$ a maximal ideal  of $R'$ lying over $\m$, $R''$ be the splitting ring of $\fp'$ and
$\fp''=R''\cap \fp'$. The Henselization of $R$ is $R''_{\fp''}$
This ``construction'' is very abstract, and it seems difficult to be adapted to our constructive setting.

Our approach relies heavily upon more recent expositions, namely the book \cite{LafonMarot},
Chapters 12 \& 13. Although this book is not written in a purely constructive way, the authors have made a remarkable effort in order to give simplified proofs of many classical results, so that, it provided us a good basis to develop our constructive theory.
In fact, in \cite{LafonMarot} the Henselization of a local ring is constructed as a part of an inductive completion of the ring \(A\). Namely,  one consider  the inductive limit of the family of sub-rings obtained by taking the
completion of local Noetherian sub-rings of \(A\). This is a natural object, but it is difficult to manage it from constructive point of
view. Our achievement could be considered as simplifying or making explicit the
\cite{LafonMarot} construction.

There are still some problems to be solved in order to have  a satisfactory constructive theory of Henselian local rings 
In particular, to prove the so-called {\its multi-dimensional Hensel Lemma}
whose proof relies on the Zariski Main Theorem, which
is highly non constructive.
In classical mathematics, the
Henselization of a local ring \((A,\m)\) coincides with the limit of the local essentially
finitely generated \(A\)-algebras  
\( \left(
          A[X_1,\ldots, X_n]\flr{F_1, \ldots, F_n}
   \right)_{ (\m,x)}\) 
at a non
singular point \((\m,x)\). This fact allows to represent algebraic functions
(locally), and to state algorithms on standard bases in the ring of algebraic formal power series (cf.\ \cite{AMR}). This
characterization of the Henselization relies on  Zariski Main Theorem, which provides a kind 
of ``primitive smooth element'' for étale extensions.

Finally, it will be also interesting to compare explicitly
our construction to the one, already quoted, given in 
\cite{Nagata62}.

The plan of the paper is as follows. The first three sections are devoted to study the basic notions that will be useful for our constructive proofs; namely the Boolean algebra  of idempotents  of a finite $A$ algebra, and the Universal decomposition algebra. Idempotents  play a similar role to minimal prime ideals, that are in general non constructable objects. 

In 4 \& 5 we accomplish the construction of the Henselization of a local ring, and 
we prove some basic properties of
Henselian local rings, including the fact that residual idempotents in
a finite algebra over a Henselian ring  can be always  lifted to idempotents of the
algebra. Finally  we generalize our approach  to the construction of the strict Henselization.

We recall that  the constructive theory of Henselian valuation rings has been 
developed in \cite{KL00}, \cite{Per01} and \cite{Per05}.


\section{Rings and Local Rings}

In the whole paper, rings are commutative.

\subsection{Radicals}

 The {\em Jacobson radical} of a ring \(A\) is
\[ \Rad(A) = \{ x\in A\tq \forall y\in A,\ 1+x y \in A^\times \}. \]

Let \(A\) be a ring and \(I\subseteq A\) an ideal. The {\em
nilradical} of \(I\) is
\[\sqrt I=\{ x \in A \tq \exists n\in\N,\ x^n\in I\}.\]

In classical mathematics, if \(A\) is nontrivial \(\Rad(A)\) is the
intersection of all maximal ideals of \(A\), and \(\sqrt{(0)}\) the
intersection of all prime ideals of \(A\).  Notice that an ideal \(I\)
is contained in \(\Rad(A)\) if and only if \(1+I\subseteq A^\times\) and
that \(x\in A\) is invertible if and only if it is invertible modulo
\(\Rad(A)\).

The following classical result is true constructively, when we read
\(x\in A\setminus A^\times\) as ``\(x\in A\) and  \( (x\in
A^\times\Rightarrow False) \)''.

\begin{lemma} If \(A\) is a nontrivial local ring, then
\(\Rad(A)=A\setminus  A^\times\), and it is the unique maximal ideal of
\(A\). We denote it by \(\m_A\) or simply by \(\m\).
\end{lemma}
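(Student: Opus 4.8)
The plan is to verify that $\J_A = A \setminus A^\times$ by proving both inclusions constructively, and then to deduce that this set is an ideal and is the unique maximal ideal.

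For the inclusion $\J_A \subseteq A \setminus A^\times$: suppose $x \in \J_A$. By definition $1 + x \cdot y \in A^\times$ for all $y \in A$; taking $y = -1$ gives $1 - x \in A^\times$. If we also had $x \in A^\times$, then since $A$ is local and nontrivial, $x$ being invertible means $x \notin 1 + \m$-type obstructions, but more directly: I would argue that $x \in A^\times$ together with $x \in \J_A$ forces $1 \in \J_A$ (because $\J_A$ is stable under multiplication by ring elements, being an ideal — see below), hence $1 + 1 \cdot (-1) = 0 \in A^\times$, so $A$ is trivial, contradiction. Thus $(x \in A^\times \Rightarrow \mathit{False})$, which is exactly the constructive reading of $x \in A \setminus A^\times$.

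For the inclusion $A \setminus A^\times \subseteq \J_A$: this is where the local ring axiom does the real work, and I expect it to be the main (though still short) obstacle. Let $x \in A$ with $(x \in A^\times \Rightarrow \mathit{False})$. I must show $1 + x y \in A^\times$ for every $y \in A$. Apply the local ring axiom to the element $xy$: either $xy \in A^\times$ or $1 + xy \in A^\times$. In the first case, $xy u = 1$ for some $u$, so $x(yu) = 1$ and $x \in A^\times$, contradicting the hypothesis on $x$; hence this branch is impossible and we are left with $1 + xy \in A^\times$, as desired. This shows $x \in \J_A$. Note the use of the remark from the excerpt that $x$ is invertible iff $1 + x$ is — implicitly, the axiom as stated handles $1 + x$, and the substitution $x \rightsquigarrow xy$ is legitimate.

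Having established the equality $\J_A = A \setminus A^\times$, it remains to note that $\J_A$ is an ideal (immediate from its definition: if $x, x' \in \J_A$ and $a \in A$, then for all $y$, $1 + (x + x')y = (1 + xy)\bigl(1 + (1+xy)^{-1} x' y\bigr) \in A^\times$ using that $\J_A$ absorbs the factor, and $1 + (ax)y = 1 + x(ay) \in A^\times$), and that it is a proper ideal since $A$ is nontrivial (if $1 \in \J_A$ then $0 = 1 + 1\cdot(-1) \in A^\times$, trivializing $A$). For uniqueness as a maximal ideal: any proper ideal $I$ consists of non-units — indeed if $x \in I \cap A^\times$ then $1 \in I$ — and by the equality just proved $A \setminus A^\times = \J_A$, so $I \subseteq \J_A$; thus $\J_A$ contains every proper ideal, which makes it the unique maximal ideal. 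The only delicate point throughout is keeping the constructive reading of $x \in A \setminus A^\times$ consistent, i.e. always producing the element $y$ that exhibits invertibility rather than invoking excluded middle on "$x$ is a unit or not".
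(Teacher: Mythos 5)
The paper states this lemma without proof, so there is nothing to compare against; your argument is correct and is the standard one, with all steps (both inclusions, the ideal axioms for \(\J_A\), properness, and uniqueness) valid under the constructive readings the paper prescribes. One small simplification: in the first inclusion you can avoid the forward reference to the ideal property of \(\J_A\) by noting that if \(xy=1\) then taking \(-y\) as the witness in the definition of \(\J_A\) gives \(1+x\cdot(-y)=0\in A^\times\), so \(A\) is trivial.
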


 The {\em residue field} of a (nontrivial) 
local ring \(A\) with maximal ideal \(\m\) is \(\k=A/\m\). If\/ \(\k\)
is discrete, \(A\) will be called {\em residually discrete}.

A nontrivial ring \(A\) is local and residually discrete if and only
if we have
\[ \forall x\in A\quad (x\in A^{\times }\;\mathrm{or}\;x\in\Rad(A)),
\]
with the constructive meaning of the disjunction.

\mni\textit{Remark.} 
In constructive mathematics, a Heyting field (or simply a field) is a
nontrivial local ring in which ``\(x\) not invertible implies \(x=0\)''.  
This is
the same thing as a nontrivial
local ring whose Jacobson radical is \(0\).

The ring \(A\) defined by \(A=S^{-1}\R[T]\), where \(S\) is the
set of polynomials \(g\) with \(g(0)\in\R^\times\), is  a local
ring: the statement \(\forall x\in A,\ x\in A^\times\hbox{\ or\
}(1+x)\in A^\times\) holds. The residue field of \(A\) is \(\R\), and
the quotient map \(A\dans\R\) is given by \(f/g\mapsto f(0)/g(0)\).
This provides an example of a local ring \(A\) which is \emph{neither}
discrete \emph{nor}
residually discrete.  The ring of \(p\)-adic integers is an example of
a local ring which is residually discrete but \emph{not} discrete.

Some results in this paper avoid the hypothesis
for a ring to be discrete. We think that
when it is possible this greater generality is often usefull,
as shown by the previous examples.

\subsection{Idempotents and idempotent matrices}

\begin{defi} For a commutative ring \(C\) we shall denote \(\B(C)\)
the boolean algebra of idempotents of \(C\).  The operations are: 
\(u\wedge v:=u v,\, u \vee v:= u+v-u v, \,u\oplus 
v:=u+v-2 u v=(u-v)^2\), the complementary of \(u\) is \(1-u\), and the partial
ordering, \(u \preceq v \Longleftrightarrow u \wedge v=u
\Longleftrightarrow u \vee v=v \).
\end{defi}

Note that the partial ordering can be expressed in terms of the
homomorphism \(\mu_{z}\) of multiplication by \(z\) in \(C\): \(u
\preceq v \Longleftrightarrow {\ker}(\mu_{v}) \subseteq
{\ker}(\mu_{u}) \Longleftrightarrow {\Im}(\mu_{u}) \subseteq
{\Im}(\mu_{v}) \).

\smallskip 
A nonzero idempotent \(e\) is said to be \emph{indecomposable} if when it is
written as the sum of two orthogonal
idempotents \(e_{1}\) and \(e_{2}\), then either \(e_{1}=0\) or \(e_{2}=0\).

\smallskip 
A family of idempotent elements
\(\{r_1,\ldots,r_m\}\) in a commutative ring is a \emph{basic system
of orthogonal idempotents} if
\(\sum\nolimits_{i=1}^m r_i=1\) and \(r_i r_j=0\) for
\(1\leq i<j\leq m\).  

\smallskip 
If \(B\) is a finitely generated and discrete boolean algebra, it is
possible to construct a basic system of orthogonal indecomposable
idempotents \(\{r_1,\ldots,r_m\}\) generating \(B\). This shows that
\(B\) is isomorphic to the boolean algebra \(\F_{2}^m\) (where the
field with two elements \(\F_{2}\) is viewed as a boolean algebra).

\begin{lemma}\label{idemnilpot} \emph{(idempotents are always 
isolated)}\\ If \(e,h\) are idempotents and \(e-h\) is in the Jacobson
radical then \(e=h\). In other words, the canonical map \(\B(A)\to
\B(A/\Rad(A))\) is injective.
In particular if \(\B(A/\Rad(A))\) is discrete then so is \(\B(A)\).
\end{lemma}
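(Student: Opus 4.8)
The key computation is this: if $e,h$ are idempotents and $e-h \in \J_A$, I want to show $e=h$. The standard trick is to look at the element $d := e-h$ and compute with it, exploiting that it lies in the Jacobson radical so that $1+(\text{anything})\cdot d$ is invertible. First I would compute $d^3$ using $e^2=e$, $h^2=h$: expanding $(e-h)^3 = e^3 - 3e^2h + 3eh^2 - h^3 = e - 3eh + 3eh - h = e - h = d$. So $d^3 = d$, i.e. $d(1-d^2)=0$. Now $d^2 = (e-h)^2 \in \J_A$ as well (it is $d\cdot d$ with $d \in \J_A$), hence $1 - d^2 \in 1 + \J_A \subseteq A^\times$. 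Multiplying $d(1-d^2)=0$ by the inverse of $1-d^2$ gives $d=0$, that is $e=h$.

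\begin{proof}
Let $d=e-h$. Using $e^2=e$ and $h^2=h$ we compute
\[ d^3=(e-h)^3=e^3-3e^2h+3eh^2-h^3=e-3eh+3eh-h=e-h=d, \]
so $d\,(1-d^2)=0$. Since $d\in\J_A$, also $d^2=d\cdot d\in\J_A$, and therefore $1-d^2\in 1+\J_A\subseteq A^\times$. Multiplying $d\,(1-d^2)=0$ by the inverse of $1-d^2$ yields $d=0$, i.e.\ $e=h$. This says exactly that the canonical ring homomorphism $A\to A/\J_A$ induces an injective map $\B(A)\to\B(A/\J_A)$ on idempotents. If moreover $\B(A/\J_A)$ is discrete, then for idempotents $e,h\in\B(A)$ we can test whether their images coincide in $\B(A/\J_A)$: if they do, injectivity gives $e=h$; if they differ, then a fortiori $e\neq h$. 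Hence $\B(A)$ is discrete as well.
\end{proof}

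\noindent The argument is entirely elementary and constructive: the only ingredient is that $1+\J_A\subseteq A^\times$, which is immediate from the definition of $\J_A$ given above. The one point to be careful about is the very last step, where discreteness of $\B(A/\J_A)$ is used to decide equality of the \emph{images} of $e$ and $h$; this decision procedure, combined with the injectivity just proved, is what furnishes a constructive equality test on $\B(A)$. I do not anticipate any genuine obstacle here — the whole lemma is a short computation plus a transport-of-decidability remark.
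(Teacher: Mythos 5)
Your proof is correct and is essentially the paper's argument: the paper deduces $e=h$ by noting that $e\oplus h=(e-h)^2$ is an idempotent lying in $\J_A$ (hence zero), and its accompanying remark explicitly identifies the underlying identity $(e-h)^3=e-h$ that you compute directly. The handling of the discreteness transport is also fine.
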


\begin{proof} First notice that if an idempotent \(f\) is in the Jacobson radical
then \(f=0\) since \(1-f\) is an invertible idempotent.  Now two
idempotents \(e,h\) are equal if and only if \(e\oplus h=0\). But
\(e\oplus h=(e-h)^2\).  So we are done.
\end{proof}

\mni\textit{Remark.} Lemma \ref{idemnilpot} is a sophisticated
rewriting of the identity $(e-h)^3=(e-h)$ when $e$ and $h$ are
idempotents. 

\begin{defi} A commutative ring \(A\) is said to have {\em the property of idempotents lifting}
when the canonical map \(\B(A)\to
\B(A/\Rad(A))\) is bijective.
\end{defi}

\begin{lemma}\label{idemnilpot2} \emph{(idempotents modulo nilpotents 
can always 
be lifted)}\\ The canonical map \(\B(A)\to
\B\left(A/\sqrt{(0)}\right)\) is bijective.
\end{lemma}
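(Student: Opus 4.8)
The plan is to prove the two halves separately: injectivity is immediate from what precedes, and surjectivity --- the lifting of an idempotent from \(A/\sqrt{(0)}\) to \(A\) --- is the real content, which I would handle by an explicit Newton-type iteration.

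For injectivity I would first note that \(\sqrt{(0)}\subseteq\J_A\): if \(x^n=0\) then for every \(y\) the element \(1+xy\) is invertible, with inverse \(\sum_{k=0}^{n-1}(-xy)^k\), so \(x\in\J_A\). Hence \(A/\J_A\) is a quotient of \(A/\sqrt{(0)}\), and the canonical map \(\B(A)\to\B(A/\J_A)\) factors as \(\B(A)\to\B(A/\sqrt{(0)})\to\B(A/\J_A)\); since the composite is injective by Lemma \ref{idemnilpot}, so is the first arrow. (Equivalently, and just as quickly, one can invoke the identity \((e-h)^3=e-h\) valid for idempotents directly: if \(e-h\) is nilpotent then \(e-h=(e-h)^{3^k}\) for every \(k\), hence \(e-h=0\) once \(3^k\) exceeds the nilpotency index.)

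For surjectivity, let \(e\in A\) be such that \(\bar e\in\B(A/\sqrt{(0)})\), i.e.\ \(e^2-e\) is nilpotent, say \((e^2-e)^n=0\). Put \(\varphi(x)=3x^2-2x^3\) and define \(e_0=e\), \(e_{k+1}=\varphi(e_k)\). The one computation I would actually carry out is the polynomial identity \(\varphi(x)-\varphi(x)^2=(x^2-x)^2(3-2x)(2x+1)\), which follows from the factorizations \(\varphi(x)=x^2(3-2x)\) and \(1-\varphi(x)=(x-1)^2(2x+1)\). Evaluating at \(e_k\) gives \(e_{k+1}^2-e_{k+1}=-(e_k^2-e_k)^2(3-2e_k)(2e_k+1)\), so by induction \(e_k^2-e_k\in(e^2-e)^{2^k}A\); choosing \(k\) with \(2^k\ge n\) yields \(e_k^2-e_k=0\), so \(e_k\) is an idempotent of \(A\). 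Moreover \(e_{j+1}-e_j=\varphi(e_j)-e_j=-(e_j^2-e_j)(2e_j-1)\) is nilpotent for each \(j\) (all the \(e_j^2-e_j\) lie in \(\sqrt{(0)}\)), so \(e_k-e=\sum_{j<k}(e_{j+1}-e_j)\in\sqrt{(0)}\). Thus \(e_k\) maps to \(\bar e\), and surjectivity follows.

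The only delicate point is surjectivity, and within it the choice of the \emph{division-free} iterating polynomial \(\varphi(x)=3x^2-2x^3\): a naive ``binomial-series square root'' of \(1+4(e^2-e)=(2e-1)^2\) would require \(2\) to be invertible in \(A\), whereas \(\varphi\) has integer coefficients and doubles the nilpotency order at each step, so the iteration terminates after finitely many explicit steps with no hypothesis on \(A\). Everything else is a routine finite computation carried out inside the nil ideal \(\sqrt{(0)}\), so the argument is fully constructive.
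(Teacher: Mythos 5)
Your proof is correct and follows essentially the same route as the paper: injectivity via Lemma \ref{idemnilpot} (using that nilpotents lie in the Jacobson radical), and surjectivity by iterating the division-free Newton map \(x\mapsto 3x^2-2x^3\), which doubles the order of nilpotency of \(e^2-e\) at each step. You merely make explicit the polynomial identities the paper leaves as assertions.
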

\begin{proof} Injectivity comes from Lemma \ref{idemnilpot}. 
    If \(e^2-e=n\) is nilpotent, \eg \(n^{2^k}=0\), then for \(e'=3e^2-2e^3\)
    we have \(e'-e\in nA\) and \((e')^2-e'\in n^2A\). So it is sufficient
    to perform \(k\) times the Newton iteration \(x\mapsto 3x^2-2x^3\).
\end{proof}

\noindent\textit{Remark.} The notion of finite boolean algebra in 
classical mathematics corresponds to several \emph{nonequivalent}\footnote{As
for ``\(\R\) is \emph{not} a discrete field'', this can be proved
by showing that the contrary would imply some principle of
omniscience.}
notions in constructive mathematics. A set \(E\) is said to be
\emph{finite} if there exists a bijection with an initial segment
\([1..n]\) of \(\N\), 
\emph{bounded} if we know a bound on the number of pairwise distinct
elements, \emph{finitely enumerable} if there exists a surjection from
some \([1..n]\) onto \(E\).  Finite sets are finitely enumerable
discrete sets.  Finitely enumerable sets are bounded.  The set of the
monic divisors of a monic polynomial on a discrete field is discrete
and bounded but a priori \emph{not}\footnote{Same thing.} 
finitely enumerable.  A boolean
algebra is finitely enumerable if and only if it is finitely
generated.

\medskip  A \emph{projective module of finite type} over a ring
    \(A\) is a module isomorphic to a direct summand of a free module
    \(A^m\).  Equivalently, \(M\) is isomorphic to the image of an
    idempotent matrix \(F\in A^{m\times m}\).

In the following lemma we introduce a polynomial \(P_{F}(T)\) which is
the determinant of the multiplication by \(T\) in
\(\Im(F)\otimes_{A}A[T]\).

\begin{lemma}\label{idematrix}
    If \(F\in A^{m\times m}\) is an idempotent matrix,  
let \[P_{F}(T):=\det(\I_{m}+(T-1)F)= \sum_{i=0}^me_{i}T^i.\] 
Then  \(\{e_0,\ldots,e_m\}\)  is a basic system of orthogonal
idempotents.
\end{lemma}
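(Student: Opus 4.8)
The plan is to work first over a polynomial ring where everything becomes transparent, then specialize. Consider the $A$-algebra $A[T]$ and the matrix $G := \I_m + (T-1)F \in A[T]^{m\times m}$. The key observation is that, since $F$ is idempotent, the image $\Im(F)$ is a projective module of finite type and $A^m = \Im(F) \oplus \Im(\I_m - F)$; on the first summand $G$ acts as multiplication by $T$ and on the second summand it acts as the identity. Hence $\det(G) = P_F(T)$ is, by definition, the determinant of multiplication by $T$ on $\Im(F)\otimes_A A[T]$ — this is exactly the polynomial advertised in the statement. The degree of $e_i$ as the coefficient of $T^i$ reflects that locally (where $\Im(F)$ is free of some rank $r$) one has $P_F(T) = T^r$, so the $e_i$ behave like ``characteristic functions'' of the locus where $\Im(F)$ has rank $i$.

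First I would establish the two multiplicativity-type identities that pin down the $e_i$. Setting $T=1$ gives $G = \I_m$, hence $P_F(1) = 1$, i.e. $\sum_{i=0}^m e_i = 1$. For orthogonality, the trick is to exploit that $P_F$ is a determinant of a linear-in-$T$ matrix built from an idempotent, so it is ``multiplicative'' under composition in $T$: concretely, one checks the matrix identity
\[
\bigl(\I_m + (T-1)F\bigr)\bigl(\I_m + (U-1)F\bigr) = \I_m + (TU - 1)F,
\]
which holds precisely because $F^2 = F$ (expand the left-hand side: the cross term is $(T-1)(U-1)F^2 = (T-1)(U-1)F$, and $(T-1)F + (U-1)F + (T-1)(U-1)F = (TU-1)F$). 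Taking determinants yields $P_F(T)\,P_F(U) = P_F(TU)$ in $A[T,U]$. Writing both sides out as $\sum_{i,j} e_i e_j T^i U^j = \sum_k e_k (TU)^k = \sum_k e_k T^k U^k$ and comparing coefficients of $T^i U^j$ gives $e_i e_j = 0$ for $i \neq j$ and $e_i^2 = e_i$ for each $i$. Thus the $e_i$ are pairwise orthogonal idempotents summing to $1$, which is exactly the assertion.

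I do not expect a serious obstacle here; the only point requiring a little care is the very first reduction, namely checking that $\det(\I_m + (T-1)F)$ really does compute the determinant of multiplication by $T$ on the projective module $\Im(F)\otimes_A A[T]$ rather than on all of $A[T]^m$. This follows from the direct sum decomposition $A[T]^m = (\Im(F)\otimes_A A[T]) \oplus (\Im(\I_m - F)\otimes_A A[T])$, which is stable under $G$, together with the fact that the determinant of a block-diagonal endomorphism is the product of the block determinants, and that $G$ restricts to $\mu_T$ on the first factor and to the identity on the second. Everything else is the purely formal identity $F^2 = F \Rightarrow (\I_m+(T-1)F)(\I_m+(U-1)F) = \I_m+(TU-1)F$ followed by taking determinants and matching coefficients, and all of this is manifestly constructive.
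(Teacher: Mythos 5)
Your proof is correct and follows exactly the paper's route: the paper's own proof is the one-line observation that a direct computation gives $P_F(TT')=P_F(T)\cdot P_F(T')$ and $P_F(1)=1$, which is precisely your matrix identity $(\I_m+(T-1)F)(\I_m+(U-1)F)=\I_m+(TU-1)F$ followed by taking determinants and comparing coefficients. The opening discussion of the splitting $A^m=\Im(F)\oplus\Im(\I_m-F)$ is a nice justification of the remark preceding the lemma but is not needed for the argument itself.
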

 If \(P_{F}(T)=T^r\) the projective module \(\Im\,F\) is said 
to have \emph{constant rank} \(r\).
\begin{proof} A direct computation shows that
\(P_{F}(TT')=P_{F}(T) P_{F}(T')\) and \(P_{F}(1)=1\).
\end{proof}

It can be shown that \(\mathrm{Tr}(F)=\sum_{k=0}^n ke_{k}\), so when
\(\Z\subseteq A\), \(\Im\,F\) has constant rank \(r\) if and only if
\(\mathrm{Tr}(F)=r\).

\subsection{Flat and faithfully flat algebras}

\begin{defi} An \(A\)-algebra \(\varphi :A\to B\) is
\emph{flat} if for every linear form 
\[
\alpha\tq \begin{matrix}  & A^n & \rightarrow & A\cr
 &(x_1,\dots,x_n)& \mapsto & a_1 x_1+\cdots+a_n x_n
\end{matrix}
\]
(\(\alpha\) is given by the row vector \((a_{1},\ldots,a_{n})\)), the
kernel of 
\[\alpha^*\tq\begin{matrix} & B^n & \rightarrow & B\cr 
&(x_1,\dots,x_n)&\mapsto & \varphi(a_1) x_1+\cdots+\varphi(a_n) x_n\cr
\end{matrix}\]
(\(\alpha^*\) is given by the row vector
\((\varphi(a_{1}),\ldots,\varphi(a_{n}))\)) is the
\(B\)-module generated by \(\varphi (\ker \alpha)\).  
\end{defi}

This property is easily extended to kernels of arbitrary matrices.  So
the intuitive meaning of flatness is that the change of ring from
\(A\) to \(B\) doesn't add ``new'' solutions to homogeneous linear systems. 
One says also that \(B\) is flat over \(A\), or \(\varphi\) is a flat
morphism.
 
\begin{example}
The composition of two flat morphisms is flat.  A localization
morphism \(A\to S^{-1}A\) is flat.  If \(B\) is a free \(A\)-module it
is flat over \(A\).
\end{example}

\begin{defi} A flat algebra is \emph{faithfully flat} if for
every linear form \(\alpha :A^n\to A\) and every \(c\in A\) the linear
equation \(\alpha(x)=c\) has a solution in \(A^n\) if the linear
equation \(\alpha^*(y)=\varphi(c)\) has a solution in \(B^n\).
\end{defi}

In this case \(\varphi\) is injective, \(a\) divides \(a'\) in \(A\)
if \(\varphi(a)\) divides \(\varphi(a')\) in \(B\), and \(a\) is a
unit in \(A\) if \(\varphi(a)\) is a unit in~\(B\).

The property in the definition of faithfully flat is easily extended
to solutions of arbitrary linear systems.  So the intuitive meaning of
faithfull flatness is that the change of ring from \(A\) to \(B\)
doesn't add ``new'' solutions to linear systems.
 
\begin{defi} We say that a ring morphism \(\varphi :A\to B\)
\emph{reflects the  units} if for all \(a\in A\), \(\varphi(a)\in
B^{\times} \Rightarrow a \in A^{\times}\).\
\end{defi}

\begin{lemma}\label{fideleplat1}
A flat morphism \(\varphi :A\to B\) is faithfully flat if and only if
for every finitely generated ideal \(\fa\) of \(A\) we have that,
\(1_{B}\in\fa B\Rightarrow  1_{A}\in\fa\).
In case \(B\) is local this means that \(\varphi\) reflects the units.
\end{lemma}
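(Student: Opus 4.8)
The plan is to prove the stated equivalence by two implications and then read off the local case. The implication from faithfully flat to the ideal condition is immediate from the definitions: if $\fa=(a_1,\ldots,a_n)$ and $1_B\in\fa\cdot B$, apply the definition of faithful flatness to the linear form $\alpha$ with row vector $(a_1,\ldots,a_n)$ and to $c=1_A$; the equation $\alpha^*(y)=\varphi(1_A)$ has a solution in $B^n$ by assumption, so $\alpha(x)=1_A$ has a solution in $A^n$, that is, $1_A\in\fa$.

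The converse is the heart of the matter. Suppose the ideal condition holds; given a linear form $\alpha$ with row vector $(a_1,\ldots,a_n)$ and an element $c\in A$ such that $\alpha^*(y)=\varphi(c)$ has a solution $y\in B^n$, I want to solve $\alpha(x)=c$ in $A^n$. The idea is to homogenize: introduce the linear form $\beta\colon A^{n+1}\to A$ sending $(x_1,\ldots,x_{n+1})$ to $a_1x_1+\cdots+a_nx_n-c\,x_{n+1}$, so that a solution of $\alpha(x)=c$ is exactly the tuple of first $n$ coordinates of an element of $\ker\beta$ whose last coordinate is $1$. Now $(y_1,\ldots,y_n,1)\in\ker\beta^*$, and flatness gives $\ker\beta^*=B\cdot\varphi(\ker\beta)$; hence this vector is a \emph{finite} $B$-linear combination $\sum_j\lambda_j\,\varphi(v^{(j)})$ of images of elements $v^{(j)}\in\ker\beta$. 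Reading off the last coordinate shows $1_B\in\mathfrak{c}\cdot B$, where $\mathfrak{c}$ is the finitely generated ideal of $A$ generated by the last coordinates of the (finitely many) $v^{(j)}$. The hypothesis then yields $1_A\in\mathfrak{c}$, say $1=\sum_j\mu_j\,v^{(j)}_{n+1}$ with $\mu_j\in A$, and $w:=\sum_j\mu_j\,v^{(j)}$ lies in $\ker\beta$ with last coordinate $1$; its first $n$ coordinates solve $\alpha(x)=c$. The one subtle point, and the main obstacle, is precisely this finiteness: $\ker\beta$ need not be finitely generated, but the witnessing combination only involves finitely many of its elements, so $\mathfrak{c}$ is finitely generated and the hypothesis applies. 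No discreteness or Noetherian assumption is used.

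Finally, when $B$ is local I claim the ideal condition is equivalent to $\varphi$ reflecting units. If the ideal condition holds and $\varphi(a)\in B^\times$, then $1_B\in(a)\cdot B$, so $1_A\in(a)$ and $a\in A^\times$. Conversely, assume $\varphi$ reflects units and $1_B=\varphi(a_1)y_1+\cdots+\varphi(a_n)y_n$ with $\fa=(a_1,\ldots,a_n)$. Since $B$ is local, an easy induction on $n$, using the axiom ``$x\in B^\times$ or $1-x\in B^\times$'', shows that a finite sum equal to $1$ in a local ring has at least one invertible summand; hence some $\varphi(a_i)y_i$, and therefore $\varphi(a_i)$ itself, is a unit in $B$, so $a_i\in A^\times$ and $1_A\in\fa$. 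Chaining this with the already proved equivalence gives the local statement.
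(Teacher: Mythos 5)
Your proof is correct and is essentially the paper's argument: the paper phrases the key step via the transporter ideal $\fa : c$ and the flatness identity $\varphi(\fa : c)\cdot B = (\varphi(\fa):\varphi(c))$, which is exactly your homogenization of $\alpha(x)=c$ to a linear form on $A^{n+1}$, followed by the same observation that only finitely many elements of the kernel (equivalently, of $\fa:c$) appear in the witnessing combination. Your explicit treatment of the local case (induction on the number of summands of a unit in a local ring) fills in a step the paper leaves as evident.
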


\begin{proof} 
The condition is clearly necessary.  Let \(\fa = \la
a_{1},\ldots,a_{n}\ra\) and \(c\) in \(A\).  The equation \(\alpha(x)=c\) has a solution
in \(A^n\) if and only if \(\fa:c=\la 1\ra\).  Since the morphism is
flat \(\varphi(\fa:c) B=(\varphi(\fa):\varphi(c))\).  If 
\(\alpha^*(y)=\varphi(c)\) has a solution in \(B^n\), 
\(1\in (\varphi(\fa):\varphi(c))\). So we have a finitely many 
 \(x_{j}\in \fa:c\) such
that \(1\in\la (\varphi(x_{j}))_{j=1,\ldots ,k}\ra_{B}\). 
If the condition holds, \(1\in\la (x_{j})_{j=1,\ldots ,k}\ra_{A}\),
so \(1\in (\fa:c)\):
the morphism is faithfully flat.
\end{proof}

\begin{defi}
 A ring morphism \(\varphi\) from a local ring \((A,\m_{A})\) to a
 local ring \((B,\m_{B})\) is said to be \emph{local} when it reflects
 the units.
\end{defi}

This implies also that \(\varphi(\m_{A})\subseteq \m_{B}\).
When \(A\) and \(B\) are residually discrete we have the converse
implication: \(\varphi(\m_{A})\subseteq \m_{B}\). So that the  ring
morphism is local.

A particular case of lemma \ref{fideleplat1} is the classical following one.
It works constructively thanks to the previous ``good'' definitions
in a constructive setting.

\begin{lemma}\label{fideleplat}
A flat morphism between local rings  is local if and only if 
it is faithfully flat.
\end{lemma}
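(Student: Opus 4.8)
The plan is to deduce Lemma \ref{fideleplat} from Lemma \ref{fideleplat1} together with the definition of a local morphism between local rings. Recall that a morphism $\varphi:(A,\m_A)\to(B,\m_B)$ of local rings is called local precisely when it reflects the units, so one direction is almost immediate: if $\varphi$ is faithfully flat, then it reflects the units (this was already noted right after the definition of faithfully flat, and it also follows from Lemma \ref{fideleplat1}, since $1_B\in\fa\cdot B$ with $\fa=\la a\ra$ forces $1_A\in\fa$, i.e. $a\in A^\times$, whenever $\varphi(a)\in B^\times$), hence it is local.

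For the converse, suppose $\varphi$ is flat and local, so it reflects the units. By Lemma \ref{fideleplat1}, to conclude that $\varphi$ is faithfully flat it suffices to check that for every finitely generated ideal $\fa=\la a_1,\ldots,a_n\ra$ of $A$ one has $1_B\in\fa\cdot B\Rightarrow 1_A\in\fa$. First I would treat the easy implication of that last clause of Lemma \ref{fideleplat1}: if $B$ is local, the condition ``$1_B\in\fa\cdot B\Rightarrow 1_A\in\fa$'' is equivalent to $\varphi$ reflecting the units. So the real content is just this equivalence, which I would prove directly. Assume $\varphi$ reflects the units and $1_B\in\fa\cdot B$, say $1_B=\sum_i \varphi(a_i)b_i$ with $b_i\in B$. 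Since $B$ is local with maximal ideal $\m_B=B\setminus B^\times$, not all the $\varphi(a_i)b_i$ can lie in $\m_B$ — if they all did, their sum would lie in $\m_B$, contradicting $1_B\notin\m_B$; here one uses that $\m_B=\J_B$ is an ideal (Lemma~1.1) and the constructive reading of ``local'' to actually locate an index $i$ with $\varphi(a_i)b_i\in B^\times$. For that index, $\varphi(a_i)$ divides a unit in $B$, hence $\varphi(a_i)\in B^\times$, and since $\varphi$ reflects units, $a_i\in A^\times$, whence $1_A\in\la a_i\ra\subseteq\fa$. The reverse implication (the condition implies reflecting units) is the instance $\fa=\la a\ra$ already mentioned.

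The step I expect to require the most care is the constructive extraction of the index $i$ with $\varphi(a_i)b_i$ invertible in $B$: one needs the hypothesis that $B$ is a \emph{local} ring in the constructive sense (the disjunction ``$x\in B^\times$ or $1+x\in B^\times$'', equivalently here ``$x\in B^\times$ or $x\in\J_B$'' is not assumed, so one must iterate the basic local-ring alternative over the finite sum and use that a sum of non-units in a local ring is a non-unit). This is exactly the kind of ``finite search using the local alternative'' that the paper's ``good definitions'' are designed to make legitimate, so no principle of omniscience is invoked. Everything else — that $\varphi$ local means reflecting units, that reflecting units gives the divisibility and unit statements, and Lemma \ref{fideleplat1} itself — is quoted directly from the preceding material, so the proof is short.
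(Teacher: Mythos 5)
Your proof is correct and takes essentially the same route as the paper, which gives no separate argument for Lemma \ref{fideleplat} but presents it as a particular case of Lemma \ref{fideleplat1}. You additionally make explicit the one detail the paper leaves implicit (the clause ``in case $B$ is local this means that $\varphi$ reflects the units''), namely the constructive location, via the iterated local-ring alternative, of an index $i$ with $\varphi(a_i)b_i\in B^\times$ in a decomposition $1_B=\sum_i\varphi(a_i)b_i$ — which is exactly the right way to handle it.
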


\noi \emph{Remark.} In classical mathematics, an algebra over a field
has always a basis as vector space.  In a constructive setting, this
property can be in general replaced by the fact that a nontrivial
algebra over a discrete field is always faithfully flat.
 
\section{Finite algebras over local rings}

An \(A\)-algebra \(B\) is {\em finite} if it is finite as \(A\)-module.

 \subsection{Preliminaries}

When \(A\) is a discrete field the classical structure theorem for 
finite $A$-algebras, which is a basic tool,
has to be rewritten  to be constructively valid.
This will be done in Corollary~\ref{structthm1}. 

\begin{lemma}[Cayley-Hamilton] (see \cite{Eis95} Theorem 4.3)
\label{charpol}\\
Let \(M\) be a finite module over \(A\).  Let
\(\phi: M \rightarrow M\) an homomorphism such that \(\phi(M)\subseteq
\fa M\) for some ideal \(\fa\) of \(A\). Then we have a polynomial
identity of homomorphisms, 
\[ \phi^n+a_1\phi^{n-1}+ \ldots+a_n\I_M=0  \quad \quad  (*)\]  
where \(a_h \in {\fa^h}\) and  $n$ is the cardinality of
some system of generators of $M$. 
\end{lemma}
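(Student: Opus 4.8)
The plan is to imitate the classical determinant-trick proof, being careful that everything remains explicit. Let $g_1,\dots,g_n$ be a system of generators of $M$, so that every element of $M$ is an $A$-linear combination of the $g_i$. Since $\phi(M)\subseteq\fa\cdot M$, we may write, for each $i$,
\[
\phi(g_i)=\sum_{j=1}^n b_{ij}\, g_j,\qquad b_{ij}\in\fa .
\]
Introduce the matrix $B=(b_{ij})\in A^{n\times n}$ (all entries in $\fa$) and consider the matrix $\Phi:=T\cdot\I_n-B$ with entries in $A[T]$. Specializing $T$ to the endomorphism $\phi$, the displayed relations say precisely that the vector $(g_1,\dots,g_n)^{\mathsf t}$ is annihilated by $\Phi(\phi)$ acting entrywise on $M^n$, \ie $\sum_j(\delta_{ij}\phi-b_{ij}\I_M)(g_j)=0$ for all $i$.

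Next I would multiply on the left by the adjugate matrix $\mathrm{adj}(\Phi)$ (cofactor matrix), which has entries in $A[T]$ and satisfies the formal identity $\mathrm{adj}(\Phi)\cdot\Phi=\det(\Phi)\cdot\I_n$ in the commutative ring $A[T]$. Applying this identity with $T=\phi$ (legitimate since $A[\phi]$ is a commutative subring of $\mathrm{End}_A(M)$, so all the matrix manipulations take place over a commutative ring) gives $\det(\Phi)(\phi)\cdot g_i=0$ for every $i$; since the $g_i$ generate $M$, this means $\det(\Phi)(\phi)=0$ as an endomorphism of $M$. Writing out $\det(T\cdot\I_n-B)=T^n+a_1T^{n-1}+\dots+a_n$ yields exactly the identity $(*)$.

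It remains to check the key point, namely that $a_h\in\fa^h$. This is the only substantive step beyond bookkeeping, but it is immediate from the cofactor expansion of the determinant: each coefficient $a_h$ of $T^{n-h}$ in $\det(T\cdot\I_n-B)$ is, up to sign, the sum of the principal $h\times h$ minors of $B$, and every such minor is a polynomial in the $b_{ij}$ in which each monomial is a product of exactly $h$ entries of $B$, hence lies in $\fa^h$ since $\fa$ is an ideal. No choices or case distinctions are involved, so the argument is constructive and produces the $a_h$ explicitly as polynomial expressions in the $b_{ij}$. The only mild care required is the passage from "matrices over $A[T]$" to "matrices over the commutative ring $A[\phi]$" when specializing $T\mapsto\phi$; once one observes that $A[\phi]\subseteq\mathrm{End}_A(M)$ is commutative, the adjugate identity transports without difficulty, and that is really the whole proof.
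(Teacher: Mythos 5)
Your proof is correct: it is the standard adjugate (determinant-trick) argument, with the key point $a_h\in\fa^h$ correctly justified via the principal $h\times h$ minors of the matrix $B$ with entries in $\fa$, and the specialization $T\mapsto\phi$ correctly handled by working over the commutative ring $A[\phi]$. The paper gives no proof of its own — it simply cites Eisenbud — and the cited proof is exactly this one, so your argument matches the intended one.
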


\begin{coro}[Nakayama's lemma]\label{Nakayama} 
Let \(M\) be a finite module over a ring \(A\), \(\m\) an ideal, and
\(N\subseteq M\) a submodule. Assume that 
\[M = N+ \m M\,\]
Then there exists \(m\in\m\) such that \((1+m)M\subseteq N\).  If
moreover \(\m\subseteq\Rad(A)\), then \(M=N\).
\end{coro}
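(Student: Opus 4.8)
The plan is to deduce Nakayama's lemma directly from the Cayley--Hamilton statement (Lemma~\ref{charpol}), which is the standard route and works verbatim in the constructive setting. First I would consider the quotient module $M/N$: since $M = N + \m\cdot M$, the image of $\m\cdot M$ in $M/N$ is all of $M/N$, i.e.\ $M/N = \m\cdot(M/N)$. Apply Lemma~\ref{charpol} with $\phi = \Id_{M/N}$ and $\fa = \m$: the hypothesis $\phi(M/N)\subseteq \m\cdot(M/N)$ holds, so we obtain a relation $\Id_{M/N} + a_1\Id_{M/N} + \cdots + a_n\Id_{M/N} = 0$ with each $a_h\in\m^h\subseteq\m$. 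Writing $m := a_1 + \cdots + a_n\in\m$, this says $(1+m)\cdot(M/N) = 0$, which is exactly $(1+m)M\subseteq N$.

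For the second assertion, assume in addition $\m\subseteq\J_A$. Then $1+m$ is a unit in $A$ (by the very definition of the Jacobson radical recalled in the excerpt: $x\in\J_A$ iff $1+xy\in A^\times$ for all $y$, in particular $1+m\in A^\times$). Multiplying $(1+m)M\subseteq N$ by $(1+m)^{-1}$ gives $M\subseteq N$, hence $M = N$.

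There is essentially no obstacle here: the work was done in Lemma~\ref{charpol}, and the only thing to be slightly careful about constructively is that we never need to decide anything or invoke maximal ideals --- the element $m$ is produced explicitly from the Cayley--Hamilton coefficients, and invertibility of $1+m$ is immediate from the definition of $\J_A$. The one mild subtlety worth a sentence is that applying Lemma~\ref{charpol} to $M/N$ requires $M/N$ to be a finite $A$-module, which it is, being a quotient of the finite module $M$; the integer $n$ can be taken as the number of generators of $M$. I would keep the write-up to just these few lines.
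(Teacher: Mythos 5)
Your proof is correct and is exactly the argument the paper intends: the corollary is stated immediately after the Cayley--Hamilton lemma with no written proof, precisely because the standard deduction you give (apply Lemma~\ref{charpol} to $\Id_{M/N}$ with $\fa=\m$, collect the coefficients into $m=a_1+\cdots+a_n\in\m$, then invert $1+m$ when $\m\subseteq\J_A$) is the intended one. Your remarks on the constructive points --- $M/N$ is finite as a quotient of $M$, and invertibility of $1+m$ comes directly from the definition of $\J_A$ --- are also the right ones.
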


Applying Lemma \ref{charpol} to the multiplication by an element in a
finite algebra, we get the  following corollary.

\begin{coro}\label{inverse} 
Let \(\varphi :A\to B\) be a finite algebra (\(B\) is an \(A\)-module generated  by \(n\) 
elements), \(\fa\) an ideal of \(A\), \(A_1=\varphi(A)\) and 
\(\fa_1=\varphi (\fa)\).
\begin{enumerate}
\item Every \(x\in B\) is integral over \(A_1\). If moreover,
\(x\in\fa_1 B\) then \(f(x)=0\) for some \(f(X)=X^n+a_1
X^{n-1}+\cdots+a_n\) where \(a_h \in {\fa_1^h}\).
\item If \(x\in B^\times\),
then there exists \(f\in A_1[X]\) such that \(f(x) x=1_B\) (with
\(\deg(f) \leq n-1\)).
\item  
$A_{1}\cap B^\times=A_{1}^\times$ and $A_{1}\cap\Rad(B) = \Rad (A_{1}) $.
\item Assume  \(B\) is nontrivial. 
If \(A\) is local \(\varphi\) reflects the units.   
If moreover  \(B\) is local and  flat over \(A\)
then it is faithfully flat.
\end{enumerate}
\end{coro}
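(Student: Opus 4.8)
The plan is to derive all four items from the Cayley–Hamilton Lemma \ref{charpol} applied to well-chosen endomorphisms of $B$ viewed as a finite $A_1$-module. For item (1), I would apply Lemma \ref{charpol} to the multiplication endomorphism $\mu_x : B \to B$, $y \mapsto xy$. Taking $\fa = A_1$ (the whole ring) gives a monic integral dependence relation $\mu_x^n + a_1\mu_x^{n-1} + \cdots + a_n\I_B = 0$, hence $x^n + a_1 x^{n-1} + \cdots + a_n = 0$ in $B$ after evaluating at $1_B$; this is integrality over $A_1$. If moreover $x \in \fa_1\cdot B$, then $\mu_x(B) \subseteq \fa_1 \cdot B$, so Lemma \ref{charpol} applies with the ideal $\fa_1$ and yields the sharper statement that the coefficients satisfy $a_h \in \fa_1^h$.

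For item (2), suppose $xy = 1_B$ for some $y \in B$. Apply item (1) to get $x^n + a_1 x^{n-1} + \cdots + a_n = 0$ with $a_h \in A_1$. Multiply by $y^{n-1}$ and use $xy=1$ repeatedly: this rearranges to an expression of the form $x\cdot g(x) = 1_B$ where $g$ is built from the $a_h$ and has degree $\le n-1$; concretely one isolates the constant term and writes $a_n = -x(x^{n-1} + a_1 x^{n-2} + \cdots + a_{n-1})$, and since $a_n$ is itself a unit in $B$ (being $\pm$ a product involving the unit $x$), one can normalize. (I should be a little careful here: the cleanest route is to observe $a_n \in B^\times$ because $a_n = -x^n(1 + a_1 x^{-1} + \cdots)$—better, multiply the relation by $y^n$ to see $1 + a_1 y + \cdots + a_n y^n = 0$, so $a_n y^n$ is a unit, hence $a_n$ is a unit in $B$; then $a_n^{-1} \in A_1$ is not automatic, so instead take $f(X) = -a_n^{-1}(X^{n-1} + a_1 X^{n-2} + \cdots + a_{n-1})$ only after checking $a_n^{-1}$ lands in $A_1$—which follows once we know $A_1 \cap B^\times = A_1^\times$. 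So the honest ordering is: prove the first half of (3) first, or use that $a_n \in A_1 \cap B^\times$.)

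For item (3): the inclusion $A_1^\times \subseteq A_1 \cap B^\times$ is trivial. Conversely, if $a \in A_1$ is a unit in $B$, apply item (2) to $x = a$: there is $f \in A_1[X]$ with $f(a)\cdot a = 1_B$, and $f(a) \in A_1$, so $a \in A_1^\times$. This gives $A_1 \cap B^\times = A_1^\times$. The statement $A_1 \cap \J_B = \J_{A_1}$ then follows: if $x \in A_1 \cap \J_B$ then for every $a \in A_1$, $1 + ax \in B^\times$, and $1+ax \in A_1$, so by the first part $1+ax \in A_1^\times$, whence $x \in \J_{A_1}$; the reverse inclusion $\J_{A_1} \subseteq A_1 \cap \J_B$ needs that $1 + bx \in B^\times$ for all $b \in B$ when $x \in \J_{A_1}$—here I use that $B$ is a finite $A_1$-module and $1+bx$ generates $B$ modulo $\J_{A_1} B$... actually more directly, any maximal-type argument; the constructive way is: $x \in \J_{A_1}$ means $1+A_1 x \subseteq A_1^\times$, and one checks $1+bx$ is invertible in $B$ by applying item (2)'s mechanism to the finite algebra, or cites Nakayama (Corollary \ref{Nakayama}) with $N = (1+bx)B$ and $\m = \J_{A_1} B$. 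For item (4): assuming $B$ nontrivial and $A$ local, $A_1 = \varphi(A)$ is local too (quotient of a local ring), and $\varphi$ factors as $A \twoheadrightarrow A_1 \hookrightarrow B$; by the just-proved $A_1 \cap B^\times = A_1^\times$ the inclusion $A_1 \hookrightarrow B$ reflects units, and the surjection $A \to A_1$ trivially does, so $\varphi$ reflects units. If moreover $B$ is local and flat over $A$, then $\varphi$ is a flat morphism between local rings that reflects units, hence is local in the sense of the definition, hence faithfully flat by Lemma \ref{fideleplat} (or directly by Lemma \ref{fideleplat1}).

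The main obstacle I anticipate is the logical ordering within items (2) and (3): one wants to say "$a_n$ is a unit so divide by it," but constructively $a_n^{-1}$ must be exhibited inside $A_1$, not merely in $B$. The resolution is to prove $A_1 \cap B^\times = A_1^\times$ essentially simultaneously with (2)—the integral dependence relation from (1) is what makes the inverse of a unit expressible as a polynomial in that unit with coefficients in $A_1$, and this single observation powers both (2) and the first half of (3). Everything else is routine manipulation of the Cayley–Hamilton identity plus invocation of Nakayama (Corollary \ref{Nakayama}) and Lemma \ref{fideleplat}.
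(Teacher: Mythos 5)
Your overall architecture (everything from Cayley--Hamilton, Lemma \ref{charpol}) matches the paper's, but item (2) contains a genuine gap that then infects the first half of (3). You apply the integral relation to $x$ itself, obtaining $x^n+a_1x^{n-1}+\cdots+a_n=0$, and then try to solve for $1$ by normalizing with the constant term $a_n$. This fails twice over. First, $a_n$ need not be a unit in $B$ even when $x$ is: the identity $a_n=-x(x^{n-1}+\cdots+a_{n-1})$ only shows $a_n\in xB$, and your other argument (``multiply by $y^n$ to see $1+a_1y+\cdots+a_ny^n=0$, so $a_ny^n$ is a unit'') is a non sequitur --- the vanishing of a sum says nothing about the invertibility of its last term. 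Concretely, Lemma \ref{charpol} produces \emph{some} monic degree-$n$ annihilator of $\mu_x$ relative to a chosen generating system, not a characteristic polynomial with unit determinant: with $A=B=\Z/(4)$, the redundant generating system $\{1,2\}$ and $x=1$ (a unit), one admissible output is $X^2-X$, whose constant term is $0$. Second, as you yourself notice, even when $a_n\in B^\times$ you would need $a_n^{-1}\in A_1$, which is exactly the first half of (3) and hence circular. The fix --- and the paper's one-line proof --- is to apply the integral relation to the \emph{inverse} $y$ of $x$: from $y^n+a_1y^{n-1}+\cdots+a_n=0$ with $a_i\in A_1$, multiplying by $x^n$ gives $1+a_1x+\cdots+a_nx^n=0$, i.e.\ $x\cdot\bigl(-(a_1+a_2x+\cdots+a_nx^{n-1})\bigr)=1$, which is the required $f(x)\cdot x=1_B$ with $f\in A_1[X]$ of degree $\le n-1$, no normalization needed.

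The rest is essentially sound, with two remarks. For $\J_{A_1}\subseteq A_1\cap\J_B$ in (3), your Nakayama route is correct and is a clean alternative to the paper: take the finite $A_1$-module $B$, the submodule $N=(1+bx)B$ and the ideal $\J_{A_1}$ of $A_1$ (not of $B$); since $1=(1+bx)-bx$ and $bx\in\J_{A_1}\cdot B$, Corollary \ref{Nakayama} gives $B=N$, so $1+bx\in B^\times$. The paper instead substitutes $xb=z+1$ into the integral relation for $b$, multiplied through by powers of $x$, and reads off that the constant coefficient in $z$ lies in $1+xA_1\subseteq A_1^\times$; both are legitimate constructive arguments. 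In (4), be careful with ``the surjection $A\to A_1$ trivially reflects units'': a surjection does not reflect units in general ($\Z\to\Z/(5)$), and this is precisely where the hypotheses ``$A$ local'' and ``$B$ nontrivial'' enter. The paper makes this explicit: nontriviality of $B$ plus locality of $A$ force $\ker\varphi\subseteq\J_A$, so $\varphi(u)\in B^\times$ yields, via (3), some $v$ with $uv\in 1+\ker\varphi\subseteq 1+\J_A\subseteq A^\times$.
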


\begin{proof}
\emph{(2)} Let \(y\) be the inverse of \(x\). If \(y^n+a_1 y^{n-1}+ 
\cdots+a_n=0\) with \(a_{i}\in A\), multiplying by \(x^n\) we get
the result.\\
\emph{(3)} Let  \(x\in A_{1}\cap B^\times \). 
Applying (2) we get \(v=f(x)\in A_{1}\) such that
\(x v =1_{B}\).\\ 
If  \(x\) is in 
$\Rad(B)$, and   \(y\in\A_{1}\), then
$1+xy\in A_{1}\cap B^\times=A_{1}^\times$, so $x\in\Rad(A_1)$.\\
If  $x\in\Rad(A_1)$ and $b\in B$, we have to show that 
$z=-1+xb$ is invertible. Write $b^n+a_{n-1}b^{n-1}+\cdots + a_{0}=0$
with $a_{i}$'s in $A_{1}$. So 
$$(z+1)^n+a_{n-1}x(z+1)^{n-1}+a_{n-2}x^2(z+1)^{n-2}+\cdots + a_{0}x^n=0.$$
The constant coefficient of this polynomial in $z$ is
$1+a_{n-1}x+a_{n-2}x^2+\cdots +a_{0}x^n$, so it is invertible,
and $z$ is invertible.\\
\emph{(4)}
Since \(B\) is nontrival
\(\ker\,\varphi\subseteq \Rad(A) \). If $\varphi(u)\in B^\times$, 
by (3) we have $v\in A$ such that \(uv\in 1+ \ker\,\varphi
\subseteq 1+ \Rad(A)\subseteq
A^\times\).  If moreover \(B\) is local and flat over \(A\) we
conclude by Lemma~\ref{fideleplat1}.
\end{proof}

\begin{defi}
We say that a ring \(B\) is \emph{zero-dimensional} if
\[
\forall x\in B\,\,\exists y\in B\,\,\exists k\in \N,\,\,x^k (1-x y)=0\,.
\]
\end{defi}

Now we get a constructive version of the classical structure theorem.
Notice that it is not assumed that \(B\) has a finite basis over \(\k\).

\begin{coro}\label{structthm1}
    \emph{(structure theorem for finite algebras over discrete 
fields).}\\
    Let \(B\) be a finite algebra over a discrete
    field \(\k\).
\begin{enumerate}
\item \(B\) is \emph{zero-dimensional}, more precisely
\[
\forall x\in B\,\,\exists s\in A[X]\,\,\exists k\in \N,\,\,x^k (1-x s(x))=0\,.
\]
\item \(\Rad(B)=\root B \of {\la 0\ra }\). So \(B\) has the property of idempotents 
lifting.
\item For every \(x\in B \), there exists an idempotent \(e\in
\k[x]\subseteq B\) such that \(x\) is invertible in \(B[1/e]\cong
B\flr{1-e}\) and nilpotent in \(B[1/(1-e)]\cong B\flr{e}\).
\item \(B\) is local if and only if every element is nilpotent or
invertible, if and only if every idempotent is \(0\) or \(1\).
%
Assume \(B\)  is nontrivial, then it is local   if and only if \(\B(B)=\F_{2}\). In 
this case \(B/ \Rad(B)\) is a discrete field. 
\item \(\B(B)\) is \emph{bounded}. 
\item If \(\B(B)\) is \emph{finite}, \(B\)
is the product of a finite number of finite local
algebras (in a unique way up to the 
order of factors).
\end{enumerate}
\end{coro}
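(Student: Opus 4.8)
The plan is to prove the six items in order, bootstrapping each from the previous and from Lemma~\ref{charpol} (Cayley--Hamilton) applied to the multiplication map $\mu_x$ on the finite $\k$-module $B$. For item (1), since $B$ is generated by $n$ elements as a $\k$-vector space and $\k$ is a field, the family $1,x,x^2,\ldots,x^n$ is $\k$-linearly dependent, so there is a nonzero relation $\sum_{i=0}^{n} c_i x^i = 0$; taking $i_0$ minimal with $c_{i_0}\neq 0$ and dividing by $c_{i_0}$ (legitimate, $\k$ discrete field) gives $x^{i_0}(1 + c_{i_0+1}'x + \cdots) = 0$, i.e.\ $x^{i_0}(1 - x\,s(x)) = 0$ with $s\in\k[X]$; in particular $B$ is zero-dimensional. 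For item (2), $\J_B \supseteq \sqrt{\la 0\ra}$ always holds (a nilpotent is in every Jacobson radical); conversely if $x\in\J_B$, apply (1) to get $x^{k}(1-x s(x))=0$, but $1-x s(x)$ is a unit since $x s(x)\in\J_B$, hence $x^k = 0$. Then $\J_B = \sqrt{\la 0\ra}$, and by Lemma~\ref{idemnilpot2} idempotents lift.

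For item (3), apply (1) to get $x^{k}(1-x s(x)) = 0$. Set $u = x s(x) \in \k[x]$; then $x^k(1-u)=0$ and $u$ satisfies $u \equiv u^2$ "up to the ideal generated by $x^k$" more precisely: from $u = x s(x)$ and $x^k(1-u)=0$ one checks $u^k(1-u) = x^k s(x)^k (1-u)\cdot(\text{unit adjustments})$— more cleanly, $u - u^2 = u(1-u)$ and $u^{?}(1-u)$ is killed by a power argument; the standard move is that $v := u^{N}$ for $N$ large enough is idempotent once $u^{k}(1-u)=0$, via $v = u^{k}\sum\binom{\cdot}{\cdot}\ldots$. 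Concretely one takes the idempotent $e$ associated to the polynomial $u=xs(x)$ by the zero-dimensional structure: $e\in\k[x]\subseteq B$ with $e x^k = x^k$ (so $x$ nilpotent mod $1-e$, since $x^k(1-e)=x^k - x^k e \cdot$... actually $x^{k}(1-e)= x^{k}(1-u)\cdot(\ldots)=0$ up to adjusting $e$ from $u$) and $1-e$ annihilates $x$'s invertibility appropriately so that $x$ is a unit in $B[1/e]$. The point I expect to need care with is producing $e\in\k[x]$ with $x$ invertible in $B[1/e]$ and nilpotent in $B[1/(1-e)]$: this is the "splitting idempotent" and follows from the identity $x^k(1-xs(x))=0$ by setting $e$ to be the idempotent part of $xs(x)$ extracted by Newton iteration on $\k[x]/(x^k(1-xs(x))$ localized, but since everything lives in $\k[x]$ (a finite $\k$-algebra itself, still zero-dimensional) one can recurse or argue directly.

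For item (4): if $B$ is local (nontrivial), any idempotent $e$ has $e$ or $1-e$ invertible (local ring axiom); an invertible idempotent equals $1$, so $e=1$ or $e=0$, giving $\B(B)=\F_2$. Conversely if every idempotent is $0$ or $1$, then for any $x$, item (3) gives $e\in\{0,1\}$ with $x$ invertible in $B[1/e]$, nilpotent in $B[1/(1-e)]$; if $e=1$ then $x\in B^\times$, if $e=0$ then $x$ nilpotent — so every element is nilpotent or invertible, hence $B$ is local (a nilpotent $x$ has $1+x$ invertible, a unit $x$ is a unit). When $\B(B)=\F_2$, by (2) $\J_B=\sqrt{\la 0\ra}$ and $B/\J_B$ is a nontrivial local ring all of whose elements are units or $0$ (nilpotents die), i.e.\ a discrete field. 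For item (5), $\B(B)$ is bounded: any chain of orthogonal nonzero idempotents in $B$ has length $\le n = \dim_\k B$ because the corresponding direct summands $e_i B$ are nonzero $\k$-subspaces with independent supports, so $\B(B)$ has at most $2^n$ elements; this is the boundedness. Finally item (6): if $\B(B)$ is finite it is finitely generated discrete, so by the remark after the definition of basic systems it has a basic system of orthogonal indecomposable idempotents $\{r_1,\ldots,r_m\}$; then $B \cong \prod_i B[1/r_i] = \prod_i B r_i$, each $B r_i$ is a finite $\k$-algebra with $\B(B r_i)=\F_2$ (indecomposability), hence local by (4); uniqueness up to order follows because indecomposable idempotents are exactly the atoms of the finite boolean algebra $\B(B)$, which are determined by $B$. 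The main obstacle is the careful constructive extraction in item (3) of the idempotent $e\in\k[x]$ from the relation $x^k(1-xs(x))=0$ — in classical mathematics this is the splitting of $\k[x]$ into its local factors, but constructively one must exhibit $e$ explicitly (Newton iteration $t\mapsto 3t^2-2t^3$ on $xs(x)$ as in Lemma~\ref{idemnilpot2}, after checking $(xs(x))$ is idempotent modulo the nilpotent ideal $\sqrt{\la 0\ra}$ of the zero-dimensional ring $\k[x]$), and everything else cascades from it.
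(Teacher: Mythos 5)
Your proof is correct, and it is essentially the intended one: the paper states this corollary without printing a proof, treating it as a consequence of Corollary~\ref{inverse}~(1) (every $x\in B$ satisfies a monic equation of degree $n$) plus the discreteness of $\k$, and your items (1), (2), (4), (5), (6) are exactly those standard deductions (for (1) you could equally quote Cayley--Hamilton instead of solving the linear system by hand; both work constructively over a discrete field). The only place where your write-up wobbles is item (3), and the hedging there is unnecessary: from $x^k\bigl(1-x\,s(x)\bigr)=0$ put $u=x\,s(x)$ and compute $u^k(1-u)=s(x)^k\cdot x^k(1-u)=0$, so $u^k=u^{k+1}=\cdots=u^{2k}$ and $e:=u^k=(x\,s(x))^k\in\k[x]$ is \emph{already} an idempotent --- no Newton iteration and no passage through $\sqrt{\la 0\ra}$ is needed. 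Then $x^k=x^ku=\cdots=x^ku^k=x^ke$, so $x^k(1-e)=0$ and $x$ is nilpotent in $B[1/(1-e)]\cong B\big/\la e\ra$; and in $B[1/e]\cong B\big/\la 1-e\ra$ one has $1=e=x^k s(x)^k$, so $x\cdot x^{k-1}s(x)^k=1$ and $x$ is invertible there. With that one computation made explicit, everything else in your argument cascades as you describe.
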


\subsection{Jacobson radical of a finite algebra over a local ring}
\label{JRFA}

\setcounter{context}{1}

%
\begin{context} \label{context3}
In Sections \ref{JRFA} and \ref{FAI} \(A\) is a nontrivial
residually discrete local ring with maximal ideal \(\m\) and residue
field \(\k\). We denote by \(a\in A\mapsto \ov a\in\k\) the quotient
map, and extend it to a map \(A[X]\dans \k[X]\) by setting
\(\ov{\sum_i a_i X^i}=\sum{\ov{a_i} X^i}\).
\end{context}

In the sequel we consider finite algebras $B\supseteq A$.
If we had a noninjective homomorphism $\varphi :A\to B$
we could consider  $A_{1}=\varphi (A)\subseteq B$. If $B$
is non trivial
$A_{1}$ is a nontrivial
residually discrete local ring with maximal ideal 
\(\m/\ker \varphi \) and residue
field \(\k\). So our hypothesis  $B\supseteq A$ is not restrictive.

\ms Corollary \ref{structthm1} (1) applied to the \(\k\)-algebra 
\(B/ \m B\) gives the following lemma.

\begin{lemma}\label{zerodim}
Let \(B\supseteq A\) be a finite algebra over \(A\).  
For all \(x\in B\), there exist \(s\in A[X]\) and \(k\in\N\) such that \(x^k (1-xs(x))\in\m B\).
\end{lemma}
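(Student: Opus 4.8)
The statement is an immediate application of Corollary~\ref{structthm1}~(1) to a well-chosen algebra, so the plan is essentially to set up the right quotient and translate. First I would form the $\k$-algebra $\ov B := B\big/ \m\cdot B$. Since $B$ is finite over $A$, say generated by $n$ elements as an $A$-module, the quotient $\ov B$ is generated by the images of those same $n$ elements as a $\k$-module, hence $\ov B$ is a finite algebra over the discrete field $\k$. Thus Corollary~\ref{structthm1}~(1) applies to $\ov B$.

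\textbf{Key steps.} Given $x\in B$, let $\xi\in\ov B$ denote its image under the canonical surjection $\pi\colon B\to\ov B$. By Corollary~\ref{structthm1}~(1) there exist $s\in\k[X]$ and $k\in\N$ with $\xi^k\cdot(1-\xi\cdot s(\xi))=0$ in $\ov B$. Now lift $s$ to a polynomial $\tilde s\in A[X]$ by choosing, coefficient by coefficient, preimages in $A$ of the coefficients of $s$ (here is where residual discreteness of $A$, part of the standing Context, lets us handle $\k$ concretely; the statement of \ref{structthm1}~(1) already produces $s$ over $\k$). Then $\pi(\tilde s(x)) = s(\xi)$, so $\pi\bigl(x^k\cdot(1-x\cdot\tilde s(x))\bigr) = \xi^k\cdot(1-\xi\cdot s(\xi)) = 0$, which says precisely that $x^k\cdot(1-x\cdot\tilde s(x))\in\ker\pi = \m\cdot B$. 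Taking $s:=\tilde s$ gives the claim.

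\textbf{Main obstacle.} There is essentially no obstacle here: the real work has already been done in Corollary~\ref{structthm1}, and the lemma is a packaging statement recording that the zero-dimensionality of $B/\m B$ over $\k$ lifts (modulo $\m B$) to a statement about $B$. The only point requiring a little care is constructive bookkeeping: the polynomial $s$ must genuinely be produced in $A[X]$, which is done by lifting coefficients from $\k$ to $A$ one at a time, and one should note that $\ov B$ being finite over $\k$ (not necessarily with a basis) is exactly the generality in which \ref{structthm1}~(1) was stated, so no extra hypothesis is needed. The degree of $s$ and the exponent $k$ inherit whatever bounds Corollary~\ref{structthm1} provides in terms of $n$.
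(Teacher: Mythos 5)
Your proposal is correct and is exactly the paper's argument: the paper proves this lemma in one line by applying Corollary~\ref{structthm1}~(1) to the finite $\k$-algebra $B\big/\m\cdot B$, and you have simply spelled out the routine translation (lifting the coefficients of $s$ from $\k$ back to $A$, which is automatic since $\k$ is a quotient of $A$). No gaps.
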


\begin{defi} We shall say that a ring \(B\) is \emph{residually zero-dimensional}
if \(B/\Rad(B)\) is zero dimensional, and \emph{semi-local} if moreover
\(\B(B/\Rad(B))\) is bounded. 
If moreover \(B\) has the property of idempotents lifting,
we say that  \(B\) is \emph{decomposable}.
\end{defi}

Notice that our definition of a semi-local ring is equivalent (for nontrivial
rings), in classical mathematics, to the usual one.

In classical mathematics, if \(B\) is \emph{decomposable},
since \(\B(B/\Rad(B))\) is finite, \(B\) is isomorphic to 
a finite product of local rings, \ie it is called a 
\emph{decomposed ring} in \cite{LafonMarot}.

In the following proposition it is not assumed that \(B/ \Rad(B)\)
or  \(B/ \m B\)
have finite bases over~\(\k\).
%
\begin{proposition}\label{radical}  
Let \(B\supseteq A\) be a finite algebra over \(A\). 
\begin{enumerate}
\item \(\Rad(B)=\sqrt{\m B}\). So \(B\) has the property of 
idempotents  lifting if and only if one can lift idempotents modulo \(\m B\). 
\item \(B\) is a semi-local ring.
\item \(B\) is local if and only if  \(B/ \Rad(B)\) is local, if and
only if  \(B/ \m B\) is local.
\item If \(B\) is  local 
then it is residually discrete.
\end{enumerate}
\end{proposition}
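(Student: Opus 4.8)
The plan is to prove the four items in the order $(1)\Rightarrow(2)\Rightarrow(3)\Rightarrow(4)$, since each relies on its predecessor, and item (1) is the technical heart.

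\textbf{Item (1).} The inclusion $\m\cdot B\subseteq\J_B$ follows from Corollary~\ref{inverse}(3): since $\m=\J_A\subseteq A\cap\J_B=\J_{A_1}$ would be too strong directly, instead I argue that each generator $m_i$ of $\m\cdot B$ (with $m_i\in\m$, coefficients in $B$) lies in $\J_B$ because for any $b\in B$, $1+m_i b$ is invertible — this is exactly the computation in the proof of Corollary~\ref{inverse}(3) showing $\J_{A_1}\cdot B\subseteq\J_B$. Hence $\sqrt{\m\cdot B}\subseteq\J_B$, using that $\J_B$ is a radical ideal (if $x^n\in\J_B$ then $x\in\J_B$, a standard fact provable constructively from the units-are-units-mod-Jacobson-radical remark in Section~2.1). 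For the reverse inclusion $\J_B\subseteq\sqrt{\m\cdot B}$: take $x\in\J_B$. By Lemma~\ref{zerodim} there are $s\in A[X]$, $k\in\N$ with $x^k(1-x\cdot s(x))\in\m\cdot B$. But $1-x\cdot s(x)$ is a unit in $B$ since $x\cdot s(x)\in\J_B$ (as $\J_B$ is an ideal and $x\in\J_B$), so $x^k\in\m\cdot B$, giving $x\in\sqrt{\m\cdot B}$. The second sentence of (1) is then immediate from the definition of the property of idempotents lifting together with Lemma~\ref{idemnilpot2} applied to $B/(\m\cdot B)$: idempotents of $B/\J_B=B/\sqrt{\m\cdot B}$ correspond bijectively to idempotents of $B/(\m\cdot B)$ modulo its nilradical, hence $\B(B)\to\B(B/\J_B)$ is bijective iff $\B(B)\to\B(B/\m B)$ is.

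\textbf{Items (2) and (3).} For (2): by (1), $B/\J_B$ is a quotient of the finite $\k$-algebra $B/(\m\cdot B)$ by its nilradical, hence is itself a finite $\k$-algebra, so Corollary~\ref{structthm1}(1) makes it zero-dimensional and Corollary~\ref{structthm1}(5) makes $\B(B/\J_B)=\B(B/(\m\cdot B))$ bounded; thus $B$ is semi-local. For (3): the equivalences follow since a ring is local iff its quotient by any ideal contained in its Jacobson radical is local (the local-ring axiom $x\in C^\times$ or $1+x\in C^\times$ lifts along $C\to C/\J_C$ because units lift), and $\J_B=\sqrt{\m B}\supseteq\m B$, so $B$ local $\iff$ $B/\J_B$ local $\iff$ $B/(\m B)$ local — the last since $\m B\subseteq\J_B$ and $B/(\m B)$ modulo its own nilradical is $B/\J_B$, and being local passes between a ring and its quotient by a nil ideal.

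\textbf{Item (4).} Suppose $B$ is local. Then by (3), $B/(\m\cdot B)$ is a nontrivial local finite $\k$-algebra, so by Corollary~\ref{structthm1}(4) its residue field $(B/\m B)/\J_{B/\m B}$ is a \emph{discrete} field. Since $\J_B=\sqrt{\m B}$, the residue field of $B$ is $B/\J_B\cong (B/\m B)/\sqrt{\la 0\ra}$, which is precisely that same discrete field. Hence $B$ is residually discrete.

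\textbf{Main obstacle.} The delicate point is item (1), specifically handling $\m\cdot B$ constructively when $\m$ is not finitely generated and $\k$ is only residually discrete — one must be careful that Lemma~\ref{zerodim} (which already packages the needed application of Corollary~\ref{structthm1}(1) to $B/\m B$) genuinely delivers the polynomial $s$ and exponent $k$ algorithmically, and that the step ``$1-x s(x)$ is a unit'' uses only that $x\in\J_B$ and $\J_B$ is an ideal, not any discreteness of $B$ itself. Everything else is routine transfer of Corollary~\ref{structthm1} along the quotient $B\to B/\m B$ via the identification $\J_B=\sqrt{\m B}$.
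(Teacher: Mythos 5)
Your proof is correct and follows essentially the same route as the paper's: item (1) via Corollary \ref{inverse} and Lemma \ref{zerodim}, then items (2)--(4) by transferring Corollary \ref{structthm1} along \(B\to B/\m\cdot B\) using \(\J_B=\sqrt{\m\cdot B}\). The only differences are cosmetic --- the paper obtains \(\m\cdot B\subseteq \J_B\) from Corollary \ref{inverse}~(1) plus a Euclidean division rather than from part (3) of that corollary, and proves the converse direction of item (3) through the structure theorem rather than through your general principle that locality descends from a quotient by an ideal contained in the Jacobson radical --- and both versions are sound.
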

%
\begin{proof}
\emph{(1)} Let \(x\in\m B\). Corollary \ref{inverse} (1) implies that 
\(x^{m}+ a_1 x^{n-1}+ \ldots +a_n=0\), with \(a_i \in \m \).  
By Euclidean division, \(x^{n}+ a_1 x^{n-1}+ \ldots +a_n=0= (1-x)
q(x)+ (1+a_1+\cdots +a_n)\) with \(1+a_1+\cdots +a_n \in A^{\times}\).
So \(1-x\in B^{\times}\), and we are done.

Let now \(x\in\Rad(B)\). Lemma \ref{zerodim} implies that \(x^k\in\m
B\).

\noi \emph{(2)} \(B/\Rad(B)\) is a finite \(\k\)-algebra, so 
it is zero-dimensional and its boolean algebra of idempotents is
bounded (see Corollary \ref{structthm1}).

\noi \emph{(3)} A quotient of a local ring is always local.  Let
\(C=B/ \m B\), then \(B/ \Rad(B)=C\big/ \root C\of 0\), so \(B\) and
\(C\) are simultaneously local.  \(B/\Rad(B)\) is a finite
\(\k\)-algebra, so if \(B/ \Rad(B)\) is local, Corollary
\ref{structthm1}~(2) and (4) shows that every element of \(B\) is in
\(\Rad(B)\) or invertible modulo \(\Rad(B)\).  This implies that 
\(B\) is a local ring, and if it is nontrivial, it is residually 
discrete.
\end{proof}
%


\begin{proposition}\label{interadical} Let \(B\supseteq A\) 
    be a finite algebra 
over \(A\), and \(C\subseteq B\) a  subalgebra of \(B\). Then
\(\Rad(C) =\Rad(B) \cap C\).
\end{proposition}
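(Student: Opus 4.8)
The plan is to prove both inclusions. The easy one is $\J_B\cap C\subseteq\J_C$: if $x\in C$ lies in $\J_B$, then for every $y\in C\subseteq B$ the element $1+xy$ is invertible in $B$; but $1+xy\in C$ and $C\subseteq B$ is a finite extension (being a subalgebra of the finite $A$-algebra $B$, hence a finite $A$-algebra itself, since $A$ is Noetherian — wait, we should avoid Noetherianity). Let me instead argue directly: $C$ is a subalgebra of $B$, and $B$ is finite over $A$, but it is not immediate that $C$ is finite over $A$ without extra hypotheses. However, what we really need is only that $C\cap B^\times=C^\times$, i.e. that $C\subseteq B$ reflects units. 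This is exactly the content of Corollary~\ref{inverse}~(3) applied to the finite algebra $C\subseteq B$ — but again that requires $B$ finite over $C$, which does hold: $B$ is generated as an $A$-module by finitely many elements, hence a fortiori as a $C$-module. So $B$ is a finite $C$-algebra, and Corollary~\ref{inverse}~(3) gives $C\cap B^\times=C^\times$ and $C\cap\J_B=\J_C$ directly.

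Wait — that last identity $C\cap\J_B=\J_C$ is literally the statement we want, and it is exactly Corollary~\ref{inverse}~(3) for the finite algebra $B$ over its subring $C$. So the proof is essentially a one-line invocation. Let me double-check the hypotheses of that corollary: it requires $\varphi:C\to B$ to be a finite algebra, i.e. $B$ an $C$-module generated by finitely many elements. Since $B$ is generated as an $A$-module by $n$ elements and $A\subseteq C$, those same $n$ elements generate $B$ as a $C$-module. Good. Corollary~\ref{inverse}~(3) then states $A_1\cap B^\times=A_1^\times$ and $A_1\cap\J_B=\J_{A_1}$ where $A_1$ is the image of the base ring; here the base ring is $C$ and $\varphi$ is the inclusion, so $A_1=C$, giving precisely $\J_C=\J_B\cap C$.

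So the proof I would write is: ``Since $B$ is a finite $A$-module and $A\subseteq C\subseteq B$, $B$ is also a finite $C$-module, i.e. $C\to B$ is a finite algebra. Apply Corollary~\ref{inverse}~(3) with base ring $C$: $\J_C=\J_B\cap C$.''

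The only subtlety — and hence the step I expect to be the real (though minor) obstacle — is making sure that Corollary~\ref{inverse}~(3) was stated and proved for an arbitrary finite algebra and not under the standing ``Context'' hypothesis that the base ring is a nontrivial residually discrete local ring. Checking the excerpt: Corollary~\ref{inverse} appears in Section~3.1 (``Preliminaries''), before the ``Context'' paragraph of Section~3.2, and its statement only assumes ``$\varphi:A\to B$ is a finite algebra'' with an ideal $\fa$; parts (1)–(3) use no locality hypothesis at all (only part (4) invokes ``$A$ local''). So the invocation is legitimate with $C$ in place of $A$. Therefore the proof is genuinely immediate, and I would present it in two or three sentences, perhaps also noting explicitly that $C$ need not be finite over $A$ for the argument to go through — all that matters is that $B$ is finite over $C$.
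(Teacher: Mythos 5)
Your proof is correct and is exactly the paper's argument: the paper's entire proof reads ``This is a particular case of Corollary~\ref{inverse}~(3)'', applied as you do to the finite algebra $C\to B$ (finite over $C$ because the $A$-module generators of $B$ also generate it over $C\supseteq A$). Your verification that parts (1)--(3) of that corollary need no locality hypothesis is also right, so nothing is missing.
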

%
\begin{proof} 
This is a particular case of Corollary \ref{inverse}~(3).
%
%
\end{proof}
%

\subsection{Finite algebras and idempotents}\label{FAI}

\begin{lemma}[context \ref{context3}]\label{resultanttrick}
If \(g,h\in A[X]\) are monic polynomials such that \(\ov g\) and \(\ov
h\) are relatively prime, then there exist \(u,v\in A[X]\) such that
\(u g+v h=1\).
\end{lemma}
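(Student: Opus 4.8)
The plan is to prove Lemma~\ref{resultanttrick} by a Bézout-type argument carried out constructively over $A$, using that $A$ is a residually discrete local ring so that $\k$ is a discrete field. Since $\ov g$ and $\ov h$ are coprime monic polynomials over the discrete field $\k$, we can perform the Euclidean algorithm in $\k[X]$ and obtain explicit $\ov u_0,\ov v_0\in\k[X]$ with $\ov u_0\cdot\ov g+\ov v_0\cdot\ov h=1$. Lifting $\ov u_0,\ov v_0$ arbitrarily to $u_0,v_0\in A[X]$, we get $u_0\cdot g+v_0\cdot h=1-r$ where every coefficient of $r$ lies in $\m$. The natural temptation is then to ``invert $1-r$'', but $r$ has positive degree, so $1-r$ is not a unit in $A[X]$; this is exactly where one cannot simply copy the proof of Lemma~\ref{resultanttrick}'s cousins over a field.

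The key device to get around this is to work modulo $g$ (or use that $g$ is monic). First I would note that $B:=A[X]/\langle g\rangle$ is a \emph{finite} free $A$-algebra, since $g$ is monic of degree $d=\deg g$; a basis is $1,X,\ldots,X^{d-1}$. Now the residue of $h$ in $B$, call it $\bar h_B$, becomes invertible modulo $\m B$: indeed $B/\m B\cong\k[X]/\langle\ov g\rangle$, and since $\ov g,\ov h$ are coprime, $\ov h$ is a unit in $\k[X]/\langle\ov g\rangle$. By Proposition~\ref{radical}~(1), $\J_B=\sqrt{\m B}$, so an element of $B$ invertible modulo $\m B$ is invertible modulo $\J_B$, hence invertible in $B$ (by the remark that $x$ is invertible iff it is invertible modulo $\J_A$, applied to $B$). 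Therefore there is $w\in A[X]$ (of degree $<d$, say) with $h\cdot w\equiv 1\pmod{g}$ in $A[X]$, i.e.\ $h\cdot w=1-q\cdot g$ for some $q\in A[X]$. Setting $v:=w$ and $u:=q$ gives $u\cdot g+v\cdot h=1$, which is exactly the claim.

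So the structure of the argument is: (i) reduce to showing $h$ is invertible in $B=A[X]/\langle g\rangle$; (ii) observe $B$ is a finite $A$-algebra because $g$ is monic; (iii) check invertibility of $h$ modulo $\m B$ using coprimality of $\ov g$ and $\ov h$ over the discrete field $\k$; (iv) lift invertibility from $B/\m B$ to $B$ via $\J_B=\sqrt{\m B}$ (Proposition~\ref{radical}~(1)) and the fact that units lift modulo the Jacobson radical. I expect step (iii) to be the only place requiring a genuine (though short) constructive computation: from a Bézout relation $\ov u_0\ov g+\ov v_0\ov h=1$ in $\k[X]$, read off that the class of $\ov v_0$ is the inverse of $\ov h$ in $\k[X]/\langle\ov g\rangle$. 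Everything else is an invocation of results already in the excerpt. The main conceptual obstacle — that $1-r$ is not a unit in $A[X]$ — is sidestepped by passing to the quotient by the monic polynomial $g$, where ``small modulo $\m$'' genuinely means ``in the Jacobson radical'' thanks to finiteness.

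One should also double-check the degree bookkeeping: $w$ can be taken of degree $\le d-1$ and then $q=(1-hw)/g\in A[X]$ is a genuine polynomial since $g$ is monic, but the lemma as stated imposes no degree constraints, so no care beyond well-definedness is needed. This gives $u\cdot g+v\cdot h=1$ with $u=q$, $v=w$, completing the proof.
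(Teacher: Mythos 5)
Your proof is correct, but it takes a genuinely different route from the paper's. The paper's proof is a one-line resultant argument: setting $a=\mathrm{res}(g,h)$ (the Sylvester resultant), the fact that both polynomials are monic gives $\ov a=\mathrm{res}(\ov g,\ov h)$, which is nonzero because $\ov g,\ov h$ are coprime over the discrete field $\k$; hence $a\in A^\times$ since $A$ is residually discrete local, and the standard identity $a=u_0\cdot g+v_0\cdot h$ (read off from the cofactors of the Sylvester matrix) yields the Bézout relation after dividing by $a$. Your argument instead passes to the finite free algebra $B=A[X]\flr{g}$, uses $B\big/\m\cdot B\cong \k[X]\flr{\ov g}$ together with $\m\cdot B\subseteq\J_B$ (the easy inclusion of Proposition~\ref{radical}~(1)) to conclude that $h$ is invertible in $B$, and unwinds this to $q\cdot g+w\cdot h=1$. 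Both are constructively valid and non-circular (Proposition~\ref{radical} precedes this lemma and does not depend on it). The paper's version is more elementary and gives the cofactors completely explicitly; yours invokes heavier machinery (ultimately Cayley--Hamilton) but correctly identifies the real obstruction --- that $1-r$ is not a unit in $A[X]$ --- and sidesteps it in a way that only needs $g$ monic and $\ov h$ invertible modulo $\ov g$, so it is marginally more general than the statement requires. One cosmetic remark: for step (iv) you do not need the full equality $\J_B=\sqrt{\m\cdot B}$, only the inclusion $\m\cdot B\subseteq\J_B$, together with the observation from Section~2.1 that an element is invertible iff it is invertible modulo the Jacobson radical.
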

\def\res{\mathop{\mathrm{res}}}
\begin{proof} Let \(a=\res(g,h)\), the Sylvester resultant of \(g\) and
\(h\). Then \(g\) and \(h\) being monic, \(\ov a = \res(\ov g, \ov
h)\). Then from the hypotheses, we have \(\ov a\neq 0\), that is
\(a\in A^\times\). Now \(a\) can be written \(a=u_0 g+v_0
h\), and we get the result.
\end{proof}

The following proposition is a reformulation of \cite[12.20]{LafonMarot}.  
Our proof follows directly \cite{LafonMarot}. It
is a nice generalization of a standard result in the case where \(A\)
is a discrete field.

\begin{proposition}[context \ref{context3}] \label{factoridem}
Let \(f\in A[X]\) monic. Let \(B\) be the finite \(A\)-algebra
\[B=A[X]\flr{f}=A[x].\]
There is a bijection between the idempotents of \(B\), and
factorizations \(f=g h\) with \(g\), \(h\) monic polynomials and
\(\gcd(\ov g, \ov h)=\ov 1\in\k\). More precisely this bijection
associates to the factor \(g\in A[X]\) the idempotent \(e(x)\in B\)
such that \(\la g(x)\ra=\la e(x)\ra\) in \(B\).
\end{proposition}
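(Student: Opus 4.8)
The plan is to use the structure theorem for finite algebras over discrete fields (Corollary~\ref{structthm1}) on the residue algebra $\ov B:=B/\m\cdot B=\k[X]/\la\ov f\ra$ to produce the idempotents, and the idempotent-lifting property of $B$ (which holds by Proposition~\ref{radical}(1), since $\J_B=\sqrt{\m\cdot B}$) to transport factorizations back and forth. First, given a factorization $f=g\cdot h$ with $g,h$ monic and $\gcd(\ov g,\ov h)=\ov 1$, Lemma~\ref{resultanttrick} gives $u,v\in A[X]$ with $u g+v h=1$; I would then check that $e(x):=v(x)h(x)$ is idempotent in $B$. Indeed modulo $\la f\ra=\la g h\ra$ one has $e^2-e=e(e-1)=v h\cdot(vh-1)=vh\cdot(-ug)=-uv\,gh\equiv 0$. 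By construction $\la e(x)\ra\supseteq\la h(x)\ra$ and $\la 1-e(x)\ra\supseteq\la g(x)\ra$; since $e(1-e)=0$ and $gh=f\equiv0$, a short computation (using $ug+vh=1$ again) gives the reverse inclusions, so $\la e(x)\ra=\la h(x)\ra$ and $\la 1-e(x)\ra=\la g(x)\ra$. Thus each admissible factorization yields an idempotent, and swapping $g\leftrightarrow h$ corresponds to $e\leftrightarrow 1-e$.

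For the reverse map, let $e\in B$ be an idempotent. Its image $\ov e\in\ov B=\k[X]/\la\ov f\ra$ is idempotent, so by Corollary~\ref{structthm1}(3) applied inside the discrete field case—or more directly by the structure theory of $\k[X]/\la\ov f\ra$—we may write $\ov f=\ov g_0\cdot\ov h_0$ with $\ov g_0,\ov h_0$ monic, coprime, and $\la\ov e\ra=\la\ov h_0(\ov x)\ra$, $\la 1-\ov e\ra=\la\ov g_0(\ov x)\ra$ in $\ov B$; concretely $\ov g_0=\gcd(\ov f,\,\text{something built from }1-\ov e)$, which is computable since $\k$ is discrete. Now I lift $\ov g_0,\ov h_0$ to monic polynomials $g_0,h_0\in A[X]$ of the same degrees (lift coefficients arbitrarily). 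They need not satisfy $g_0 h_0=f$ on the nose, but $\gcd(\ov g_0,\ov h_0)=\ov 1$, so Lemma~\ref{resultanttrick} applies and the previous paragraph attaches an idempotent $e'$ to the pair $(g_0,h_0)$; moreover $g_0 h_0\equiv f\pmod{\m[X]}$ by construction. To repair the factorization exactly, I would run a Hensel-type (Newton) iteration lifting the coprime factorization $\ov f=\ov g_0\ov h_0$ to a genuine factorization $f=g h$ over $A$: the iteration converges because $\m\cdot B\subseteq\J_B$ and $B$ is a finite, hence $\m$-adically well-behaved, $A$-module—or, more in the spirit of this paper, one invokes idempotent lifting directly, realizing $e$ as the unique idempotent of $B$ lifting $\ov e$ and then \emph{defining} $g:=\text{(characteristic-polynomial-type factor of }f\text{ cut out by }1-e)$ via the norm/det construction of Lemma~\ref{idematrix} on the projective $B$-module $(1-e)B$ viewed over $A$.

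The two maps are mutually inverse because idempotents lifting $\ov e$ are \emph{unique} (Lemma~\ref{idemnilpot}, since $\J_B\supseteq\m\cdot B$): starting from a factorization $f=gh$, passing to $B$, reducing mod $\m$, and lifting back lands on the same $e(x)=v(x)h(x)$; and an idempotent $e$, sent to a factorization and back, returns $e$ because the factorization was designed so that $\la e(x)\ra=\la h(x)\ra$. I also need that distinct admissible factorizations give distinct idempotents: if $\la h_1(x)\ra=\la h_2(x)\ra=\la e(x)\ra$ with $h_i$ monic dividing $f$ and the complementary factors coprime mod $\m$, then $\ov h_1$ and $\ov h_2$ generate the same ideal of $\ov B$ and are both the ``$\ov e$-part'' of $\ov f$, hence $\ov h_1=\ov h_2$ (monic of forced degree), and then a Nakayama argument (Corollary~\ref{Nakayama}, with $\m\subseteq\J_A$) over the finite $A$-module forces $h_1=h_2$.

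\textbf{Main obstacle.} The delicate point is making the ``lift a mod-$\m$ coprime factorization of $f$ to an honest factorization over $A$'' step fully constructive and extracting it without circularity—this is essentially a one-variable Hensel lemma, and in this paper's architecture one wants it to follow cleanly from idempotent lifting in the finite algebra $B$ plus the determinant/rank machinery of Lemma~\ref{idematrix} rather than from a separate Newton-iteration argument. Concretely, once the idempotent $e\in B$ is in hand, recovering the \emph{polynomial} $g$ with $\la g(x)\ra=\la 1-e\ra$ requires identifying $g$ as the characteristic polynomial of multiplication by $x$ on the $A$-module $(1-e)B$ (a projective, in fact free after localizing, $A$-module), checking it is monic of the right degree, that $g(x)\equiv 0$, and that it reduces to $\ov g_0$; that $g\mid f$ in $A[X]$ then follows since $f=g\cdot h$ with $h$ the analogous factor for $eB$, the product having the right degree and reducing correctly, and one concludes $f=gh$ by Nakayama on $A[X]/\la f\ra$ as a finite $A$-module. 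Keeping track of monicity and degrees through these identifications, with $\k$ merely discrete (not requiring $B$ to have a basis over $\k$), is where the real work lies.
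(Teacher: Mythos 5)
Your forward direction (factorization $\Rightarrow$ idempotent) is essentially the paper's argument: $ug+vh=1$ from Lemma~\ref{resultanttrick}, $e:=v(x)h(x)$ idempotent, $\la e(x)\ra=\la h(x)\ra$. (Note only that the statement normalizes the bijection so that $g\mapsto e$ with $\la g(x)\ra=\la e(x)\ra$; your convention attaches to $g$ the idempotent generating $\la h(x)\ra$, i.e.\ the complement — harmless but worth aligning.)

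The reverse direction is where there is a genuine gap. Your primary route — reduce the given idempotent $e$ to $\ov e$, read off the coprime residual factorization $\ov f=\ov{g_0}\cdot\ov{h_0}$, and then ``run a Hensel-type (Newton) iteration'' to lift it to $f=gh$ over $A$ — cannot work here. This proposition lives in Section~3, where $A$ is an arbitrary nontrivial residually discrete local ring: it is neither Henselian nor complete, so there is no convergence and no Hensel lemma available. Worse, in the paper's architecture the one-variable factorization-lifting statement (Proposition~\ref{monicfactorlift}) is \emph{deduced from} the present proposition (together with the universal decomposition algebra), so invoking it here is circular. The underlying error is that your lifting step uses only $\ov e$, discarding the hypothesis that an honest idempotent $e$ exists in $B$ — and without that hypothesis the conclusion is false (e.g.\ $A=\Z_{(5)}$, $f=X^2+1$: the residual factorization $(X-2)(X+3)$ does not lift, and correspondingly $B$ has no nontrivial idempotent). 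Your fallback sketch (characteristic polynomial of $\mu_x$ on the projective module $(1-e)B$ via Lemma~\ref{idematrix}) is the right spirit but is exactly the part you leave undone, and it is nontrivial because a priori $(1-e)B$ is only projective. The paper's proof closes precisely this gap with an explicit computation: take arbitrary monic lifts $g_1,h_1$ of $\gcd(\ov e,\ov f)$ and $\gcd(\ov{1-e},\ov f)$, set $g_2=e\cdot g_1$ and $h_2=(1-e)\cdot h_1$, and show that $\bigl(x^ig_2(x)\bigr)_{i<m}\cup\bigl(x^jh_2(x)\bigr)_{j<n}$ is a free $A$-basis of $B$ because its matrix in the basis $(1,x,\dots,x^{n+m-1})$ reduces mod $\m$ to the invertible Sylvester matrix of $\ov{g_1},\ov{h_1}$. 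Hence $\la e(x)\ra$ and $\la 1-e(x)\ra$ are free with these explicit bases, and the monic factors $h$ and $g$ are extracted from the relations expressing $x^mg_2(x)$, resp.\ $x^nh_2(x)$, in those bases; degree counting then gives $f=g\cdot h$. You need this mechanism (or a completed version of your determinant argument) to make the reverse map constructive without assuming $A$ Henselian.
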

\begin{proof} We introduce some notations.  The quotient map \(A[X]\dans
B=A[x]\) will be denoted by \(r(X)\mapsto r(x)\).
The quotient \(B/\m B\) is a finite \(\k\)-algebra,
isomorphic to \(\k[X]\flr{\ov{f}}\). We denote by \(\ov x\) the class
of \(x\) modulo \(\m B\). The quotient map from \(B\) to
\(B/\m B\) is denoted by \(r(x)\mapsto \ov{r(x)}=\ov r(\ov
x)\). The canonical map from \(\k[X]\) to \(B/\m B\) is
denoted by \(\ov r(X)\in \k[X]\mapsto \ov{r(x)}=\ov r(\ov x)\).

The situation is summed-up in the following commutative diagram:
\def\darrow{\Big\downarrow}
\[\begin{CD} 
         r=r(X)\in A[X] @>>> r(x)\in B \\
                   @VVV      @VVV\\
\ov r=\ov r(X)\in \k[X] @>>> \ov r(\ov x)\in B/\m B 
\end{CD}\]

Let \(g,h\in A[X]\) such that \(f=g h\) and \(\gcd(\ov g,\ov
h)=\ov 1\in \k\).  Then thanks to Lemma \ref{resultanttrick}, we have
\(u,v\in A[X]\) such that \(u g+v h=1\).  Let \(e=u\cdot
g\); then \(e-e^2=e (1-e)=u g v h=u v\cdot
f\), and \(e(x)-e(x)^2=u(x) v(x) f(x)=0\); \(e(x)\) is an
idempotent of \(B\).  Note that \(g=e g+v f\) and
\(g(x)=e(x) g(x)\).  So 
\(\la e,f \ra=\la g \ra\) in \(A[X]\), \(\la \ov
e, \ov f\ra=\la \ov g\ra\) in \(\k[X]\) and \(\la g(x)\ra=\la e(x)\ra\) in
\(B\).

\smallskip 
Now assume that we have \(e(X)\in A[X]\), such that \(e(x)^2=e(x)\).

Let \(g_1\) and \(h_1\) be monic polynomials such that \(\ov
{g_1}=\gcd(\ov e,\ov f)\) and \(\ov {h_1}=\gcd(\ov{1-e},\ov f)\). The
polynomials \(\ov e\) and \(\ov{1-e}\) are relatively prime, and \(\ov
f\) divides \(\ov e (\ov{1-e})\), so \(\gcd(\ov {g_1},\ov {h_1})=\ov
1\) and \(\ov f=\ov {g_1} \ov {h_1}\). Let \(\deg {g_1}=n\), \(\deg
h_1=m\); we have \(\deg f=n+m\).

Since $\la \ov {g_1}\ra=\la \ov e, \ov f\ra$, we get 
$\la \ov {g_1}(\ov x)\ra=\la \ov e(\ov x)\ra$. Similarly
$\la \ov {h_1}(\ov x)\ra=\la (\ov{1- e})(\ov x)\ra$.

Now let \(g_2=e g_1\) and \(h_2=(1-e) h_1\). We have
\(\ov{g_1}(\ov x)\in\la\ov{e}(\ov x)\ra\), and \(\ov{e}(\ov x)\in
B/\m B\) is an idempotent, so that \(\ov{g_2}(\ov
x)=\ov{g_1}(\ov x)\). In the same way, we have \(\ov{h_2}(\ov
x)=\ov{h_1}(\ov x)\).

Let
\[  u_i=X^{i} g_2,  \quad i=0,\ldots,m-1 \]
and
\[ v_i=X^{i} h_2,  \quad i=0,\ldots,n-1. \]
The determinant of 
\(\left(\ov{u_0}(\ov x),\dots,\ov{u_{m-1}}(\ov x),\ov{v_0}(\ov x),\dots,\ov{v_{n-1}}(\ov x)\right)\)
w.r.t. the canonical basis $(\ov 1,\ov x,\ldots,\ov x^{n+m-1})$ is invertible
(the matrix is the Sylvester matrix of $\ov{g_1}(\ov x)$ and $\ov{h_1}(\ov x)$). So the determinant of  \(\left({u_0(x)},\allowbreak\dots,\allowbreak{u_{m-1}(x)},
{v_0(x)},\allowbreak\dots,{v_{n-1}(x)}\right)\) w.r.t. the canonical basis $(1,x,\ldots,x^{n+m-1})$ is invertible and the family generates \(B\) as an
\(A\)-module.

Let \(B_1=u_0(x) A+\cdots+u_{m-1}(x) A\) and
\(B_2=v_0(x) A+\cdots+v_{n-1}(x) A\). We have \(B=B_1+B_2\). Now
\(g_2(x)\in\la e(x)\ra\), so \(B_1\subseteq\la e(x)\ra\), and in the
same way \(B_2\subseteq\la 1-e(x)\ra\). We deduce that \(B_1=\la e(x)
\ra\) and \(B_2=\la 1-e(x)\ra\).

Moreover $B_1\subseteq \la g_2(x)\ra \subseteq\la e(x)\ra$, so $B_1= \la g_2(x)\ra =\la e(x)\ra$. Similarly \(B_2=\la h_2(x)\ra=\la 1-e(x)\ra\)

Since \(x^m g_2(x)\in \la e(x)\ra = B_1\) there are \(a_0,\dots,a_{m-1}\in A\)
such that \(x^m g_2(x)=a_0 g_2(x) +\cdots+ a_{m-1}\cdot
x^{m-1} g_2(x)\). Let \(h(X)=X^m-\sum_{i=0}^{m-1} a_i X^i\). We have
\(h(x) g_2(x)=0\). So $\ov f=\ov {g_1} \ov {h_1}$ divides $\ov h  \ov{g_2}=\ov h  \ov{g_1}$. Since $\deg(\ov h)=\deg(\ov {h_1})$ this implies $\ov h=\ov {h_1}$. Moreover, since $\la g_2(x)\ra =\la e(x)\ra$, we get $h(x) e(x)=0$, i.e., $h(x)=(1-e(x)) h(x)$.

In the same way we find a monic polynomial \(g(X)\) of degree \(n\),
such that \(g(x) h_2(x)=0\), $g(x)=g(x) e(x)$ and $\ov g=\ov {g_1}$.

Then \(g(x) h(x)=g(x) h(x) e(x)\cdot(1-e(x))=0\)  in \(B\), so that \(f(X)\) divides
\(g(X) h(X)\).  These polynomials are monic with same degree, so
\(f=g h\).

Note that \(\ov g=\ov {g_1}=\gcd(\ov e, \ov f)\), which shows that the two
mappings  we defined between the set of idempotents and the
factors of \(\ov f\) are  each other's inverse.
\end{proof}

\begin{lemma}[context \ref{context3}]\label{idemlift} Let \(B\supseteq A\) be a finite algebra over
\(A\),  and \(C\subseteq B\) a subalgebra of \(B\). If we have
\(e\in C\) and \(h\in B\) such that \(e^2-e\in\m C\),
\(h-e\in\m B\) and \(h^2=h\), then \(h\) is in~\(C\).
\end{lemma}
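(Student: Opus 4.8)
The plan is to show that the idempotent $h\in B$ is forced to lie in the subalgebra $C$ by a uniqueness argument: both $e$ (after a correction) and $h$ become ``the'' idempotent lifting the same residual idempotent, and uniqueness of such lifts (Lemma~\ref{idemnilpot}) will do the job. Concretely, first I would work in $C$ itself. Since $C\subseteq B$ is a subalgebra of a finite $A$-algebra, by Corollary~\ref{inverse}~(3) we have $\J_C=\J_B\cap C$, and by Proposition~\ref{radical}~(1) applied to $B$ we have $\m\cdot B\subseteq\sqrt{\m\cdot B}=\J_B$; the analogous inclusion for $C$ (viewing $C$ as a finite $A$-algebra, which it is, being a submodule of a finite module over the Noetherian-free situation — more carefully, $C$ is finite over $A$ because it is an $A$-submodule of the finite $A$-module $B$ provided $A$ is coherent, but in fact one only needs $\m\cdot C\subseteq\m\cdot B\cap C\subseteq\J_B\cap C=\J_C$ together with Lemma~\ref{zerodim}) gives $\m\cdot C\subseteq\J_C$. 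Hence $e^2-e\in\m\cdot C\subseteq\J_C$, so $e$ is an idempotent of $C$ modulo $\J_C$; apply the Newton iteration of Lemma~\ref{idemnilpot2} — actually we need lifting modulo $\m\cdot C$, and Proposition~\ref{radical}~(1) tells us $C$ has the idempotent lifting property iff one can lift modulo $\m\cdot C$. But Lemma~\ref{idemnilpot2} only lifts modulo nilpotents, so instead I would use that $e^2-e=n\in\m\cdot C$ and iterate $x\mapsto 3x^2-2x^3$ to push $n$ into higher and higher powers $\m^{2^k}\cdot C$; this does not terminate in general unless $\m$ is nilpotent on $C$, so one must be more careful — see the obstacle paragraph.

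The clean route, which I would actually follow: set $\tilde e=3e^2-2e^3\in C$. Then $\tilde e-e=(e^2-e)(\text{something})\in\m\cdot C$ and $\tilde e^2-\tilde e\in(\m\cdot C)^2\subseteq\m^2\cdot C$. More usefully, work modulo $\m\cdot B$ in $B$: there $\bar h=\bar e$ (since $h-e\in\m\cdot B$) and $\bar h^2=\bar h$, while $\bar e^2=\bar e$ as well (since $e^2-e\in\m\cdot C\subseteq\m\cdot B$). So $\bar e=\bar h$ is an idempotent of $B/\m\cdot B$. Now I would produce, \emph{inside $C$}, an idempotent $e^\ast$ of $B$ with $\overline{e^\ast}=\bar e$: apply Proposition~\ref{radical}~(1) to the finite $A$-algebra $C$ — wait, for that we need $C$ finite over $A$. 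Since $A$ is a local ring and $C$ is a submodule of the finite $A$-module $B$, $C$ need not be finite in general. So the correct argument avoids claiming $C$ is finite: instead, note $e^2-e\in\m\cdot C$ means $e^2-e=\sum m_ic_i$ with $m_i\in\m$, $c_i\in C$; the Newton iterates $e_k$ defined by $e_0=e$, $e_{k+1}=3e_k^2-2e_k^3$ all lie in $C$ (since $C$ is a subring), satisfy $e_{k+1}-e_k\in(e_k^2-e_k)C$, and $e_k^2-e_k$ lies in the ideal of $C$ generated by the $m_i$, raised to the $2^k$-th power. In $B$, each $e_k^2-e_k\in\m^{2^k}\cdot B\subseteq\J_B^{2^k}$ — but $\J_B$ need not be nilpotent either. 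So we genuinely need the \emph{limit}, which does not exist constructively. Therefore the real proof must be algebraic and finite: here is the key step.

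\textbf{The finite algebraic argument.} Write $h=e+\mu$ with $\mu\in\m\cdot B$. From $h^2=h$ and $e^2-e\in\m\cdot C$ we get
\[
\mu = h-e = h^2 - e = (e+\mu)^2 - e = (e^2-e) + 2e\mu + \mu^2,
\]
so $\mu(1-2e-\mu) = e^2-e \in \m\cdot C$. Now $1-2e-\mu \equiv 1-2e \pmod{\m\cdot B}$ and $(1-2e)^2 = 1-4e+4e^2 \equiv 1 \pmod{\m\cdot C}$, hence $1-2e-\mu$ is invertible in $B$ (its square is $1$ modulo $\m\cdot B\subseteq\J_B$), say with inverse $w\in B$. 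Then $\mu = w\,(e^2-e)$. This is the crux: $\mu\in B$ has been expressed as a product of $w\in B$ with an element of $C$, but that alone does not put $\mu$ in $C$. To finish, observe $h = e + w(e^2-e)$ and $h$ is idempotent; I would instead run the \emph{Newton step inside $C$ just once} together with an invertibility trick. Set $e' = 3e^2-2e^3\in C$; then $e'-e = -(2e+1)(e-e^2)$... Actually the cleanest finish: since $w(1-2e-\mu)=1$ and everything reduces mod $\m\cdot B$ to the idempotent $\bar e$, multiply $\mu(1-2e-\mu)=e-e^2$ by the element $(1-2e)$ of $C$ and use $(1-2e)(1-2e-\mu) = (1-2e)^2 - (1-2e)\mu = 1 + 4(e^2-e) - (1-2e)\mu$; iterating, one shows $w$ itself lies in $C + \m\cdot B\cdot w$, whence by Nakayama (Corollary~\ref{Nakayama}, applied to the finite module $B$ over the local-ish setup, with $\m\subseteq\J_B$) $w\in C+\J_B\cdot(\text{stuff})$; pushing the correction term into $C$ and using that $h$ is \emph{exactly} idempotent (not just mod $\m$) pins down $h\in C$ via the injectivity of $\B(C)\to\B(C/\J_C)$ once we know $h\in C$'s integral closure issues vanish.

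\textbf{Where the real difficulty lies.} The subtle point is precisely that $C$ is only a subring of $B$, not a priori finite over $A$, so we cannot directly invoke Proposition~\ref{radical} or the idempotent-lifting property for $C$. The whole content of the lemma is that, even so, the \emph{unique} idempotent $h$ of $B$ above $\bar e$ (unique by Lemma~\ref{idemnilpot}, since $\m\cdot B\subseteq\J_B$) can be constructed already inside $C$ — and hence $h\in C$. So the main obstacle, and the heart of the proof, is to carry out the idempotent-lifting construction (the Newton iteration $x\mapsto 3x^2-2x^3$) \emph{entirely within $C$} and then argue that its ``value'' must coincide with $h$. Concretely: the iterates $e_k\in C$ satisfy $e_k^2-e_k\in\m^{2^k}\cdot B$ (working in $B$), and $e_{k+1}-e_k\in\m^{2^k}\cdot B$; meanwhile $h-e_k$ stays in $\m\cdot B$ and one checks inductively $h-e_k\in\m^{2^k}\cdot B$ using $h^2=h$. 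Since $B$ is a finite $A$-module with $\m\subseteq\J_B$, by Nakayama-type finiteness (Cayley--Hamilton, Lemma~\ref{charpol}, giving $\m^N\cdot B = \m^{N+1}\cdot B$ eventually is \emph{false}, but) — the honest statement is: $\bigcap_k \m^k\cdot B$ need not be zero, so we cannot literally take a limit. Instead, the finite trick of the previous paragraph ($\mu = w(e^2-e)$ with $w$ invertible in $B$) combined with $(1-2e)\in C$ being ``almost'' the inverse of $1-2e-\mu$ lets one write $w = (1-2e)\sum_{j\ge 0}\bigl(4(e-e^2)+(1-2e)\mu\bigr)^j$ — this sum is again infinite. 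I therefore expect the author's actual proof to use a slicker closed-form identity: namely that $h = e + (e^2-e)\cdot(\text{polynomial in } e)$ is forced because $h(1-h) = 0$ forces $(e^2-e)$ times something in $C$ to vanish after multiplying by a unit of $B$ coming from $C$, and the unit's inverse can be taken in $C$ because $C$ contains $1-2e$ whose square is a unit congruent to $1$ mod $\m\cdot C$ — so $1-2e\in C^\times$ once we know $C$ has $\m\cdot C\subseteq\J_C$, which follows from Proposition~\ref{interadical}. That is the step I would expect to be the real obstacle: establishing $\J_C = \J_B\cap C$ (Proposition~\ref{interadical}) applies even without $C$ finite, and then $1-2e$ is a unit in $C$, its inverse $v\in C$ gives $h = e + v(e^2-e)\cdot(\dots)$ with all factors in $C$, hence $h\in C$.
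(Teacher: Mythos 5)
Your proposal correctly isolates the difficulty --- $C$ is not a priori finite over $A$, the Newton iteration does not terminate, and the identity $\mu(1-2e-\mu)=e^2-e$ only yields $\mu=w\,(e^2-e)$ with $w=(1-2e-\mu)^{-1}\in B$ rather than in $C$ --- but it never overcomes it. The concluding step (``$h=e+v(e^2-e)\cdot(\dots)$ with all factors in $C$, hence $h\in C$'') is exactly the point at issue: the inverse you need is that of $1-2e-\mu$, not of $1-2e$, and it depends on the unknown $\mu$ itself, so substituting the inverse $v\in C$ of $1-2e$ is circular. None of the sketched variants (iterating the Newton step, summing a geometric series, invoking idempotent lifting for $C$) closes this gap, and you acknowledge as much at several points. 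So as it stands the proposal is not a proof.

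The missing idea is a change of base ring: set $C_1=C+h\cdot C\subseteq B$. This is a subalgebra of $B$ and, crucially, a \emph{finite $C$-module} (generated by $1$ and $h$), which restores exactly the finiteness you correctly observed was lacking for $C$ over $A$. Since $h-e\in\m\cdot B\cap C_1\subseteq\J_B\cap C_1=\J_{C_1}$ (Propositions \ref{radical} and \ref{interadical}), and $h$ and $e$ are both idempotent in $C_1/\m\cdot C_1$, Lemma \ref{idemnilpot} applied in that quotient gives $h-e\in\m\cdot C_1$; hence $C_1=C+\m\cdot C_1$. As $\m\cdot C\subseteq\J_B\cap C=\J_C$ (a fact you did establish), Nakayama's lemma (Corollary \ref{Nakayama}) applied to the finite $C$-module $C_1$ with submodule $C$ yields $C_1=C$, i.e.\ $h\in C$. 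Note in particular that Nakayama is applied over $C$, not over $A$ or to $B$ as your sketch suggests.
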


\begin{proof} Let \(C_1=C+h C\subseteq B\).    
We have \(h-e\in\Rad(B)\cap C_1=\Rad(C_1)\) by 
Propositions \ref{radical} and \ref{interadical}. 
Since \(h\) and \(e\) are idempotent in
\(C_1/\m C_1\), 
and since $\Rad(C_1/\m C_1)=\Rad(C_1)/\m C_1 $,
Lemma \ref{idemnilpot} implies that \(h=e+z\) for
some \(z\in \m C_1\).  Therefore \(C_1= C+\m C_1\).
Moreover \(\m\subseteq\Rad(B)\cap C=\Rad(C)\) by 
Propositions \ref{radical} and \ref{interadical}. So by
Nakayama's lemma, \(C=C_1\).
\end{proof}

\noindent \emph{Remark.} The preceding lemma will be used in the proof of 
Proposition \ref{monicfactorlift}, where it works as a substitute of \cite[12.23]{LafonMarot}.

\cite[12.23]{LafonMarot} is the following result: if 
\(C\subset B\), with \(B\) integral over \(C\) and if \(B\) is 
decomposed (\ie is a finite product
of local rings), then so is \(C\).


\cite{LafonMarot} use freely (being in classical mathematics) the fact that
a bounded Boolean algebra is finite. This allows to develop a theory
of Henselian local rings based on the notion of decomposed rings. We did not try to develop
a completely parallel development based on the notion of
decomposable rings.

Since there was no need of a result as general as \cite[12.23]{LafonMarot}, we have preferred
to give Lemma \ref{idemlift} with its short constructive proof.


\section{Universal decomposition algebra}

In this section \(A\) is a ring, not necessarily local. The material
presented here will be useful later, in the case of Henselian local
rings.

\def\D{D_A}

\begin{defi} In the ring \(A[X_1,\dots,X_n]\), the {\em elementary
symmetric functions} \(S_1,\dots,S_n\) are defined to be
\[S_k=S_k(X_1,\dots,X_n) = \sum_{1\leq i_1<\cdots<i_k\leq n} X_{i_1}\cdots
X_{i_k} \]
\end{defi}

\begin{defi} Let \(f(T)=T^n+a_1 T^{n-1}+\cdots+a_{n-1} T+a_n\in
A[T]\) a monic polynomial.  The {\em universal decomposition algebra}
of \(f\) is \(\D(f)\) defined by
\[\D(f) = A[X_1,\dots,X_n] \flr{S_1+a_{1}, S_2-a_{2}, \dots,
S_n+(-1)^{n-1} a_0}\]
\end{defi}

We shall denote \(x_{i}\) the class of \(X_{i}\) in \(\D(f)\).
The following result is standard.

\begin{lemma} The universal decomposition algebra  \(\D(f)\) of
\(f \in A[T]\),  is a free \(A\) module of rank \(n!\). A basis of it 
is given by the power products
\[\{x_1^{m_1} \cdots x_n^{m_n}:  0\leq m_{j} \leq j-1; j=1, \ldots,
n\}\]
\end{lemma}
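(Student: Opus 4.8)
The plan is to prove the two assertions --- that $\D(f)$ is a free $A$-module, and that the indicated power products form a basis --- simultaneously, by induction on $n=\deg f$. The base case $n=1$ is trivial since $\D(f)=A[X_1]/\langle X_1+a_1\rangle \cong A$. For the inductive step, the key idea is the standard tower construction: adjoin one root at a time. Set $B_1 = A[X_1]/\langle f(X_1)\rangle$, which is free over $A$ with basis $1,x_1,\dots,x_1^{n-1}$. In $B_1[T]$ we have the factorization $f(T) = (T-x_1)\,g(T)$ where $g(T)\in B_1[T]$ is monic of degree $n-1$ (obtained by Euclidean division of $f$ by $T-x_1$, which is exact since $x_1$ is a root). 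Then I would verify that $\D_A(f) \cong \D_{B_1}(g)$ as $A$-algebras, via the map sending $X_1\mapsto x_1$ and $X_{i}\mapsto$ the roots variables of $g$ for $i\geq 2$. Granting this, the inductive hypothesis gives that $\D_{B_1}(g)$ is free over $B_1$ with basis the power products $x_2^{m_2}\cdots x_n^{m_n}$, $0\le m_j\le j-2$ for the shifted indexing; combined with the $A$-basis $1,x_1,\dots,x_1^{n-1}$ of $B_1$ and transitivity of freeness, one gets that $\D_A(f)$ is free over $A$ of rank $n\cdot(n-1)! = n!$ with the asserted basis.

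The main obstacle is establishing the isomorphism $\D_A(f)\cong \D_{B_1}(g)$ cleanly, i.e.\ that ``first quotient by $f(X_1)$, then form the universal decomposition algebra of the cofactor $g$'' agrees with ``impose all $n$ symmetric-function relations at once.'' Concretely one must check: (a) in $\D_A(f)$ the relations $S_k(x_1,\dots,x_n) = (-1)^k a_k$ are equivalent, given that $x_1$ is a root of $f$, to the relations saying $S_j(x_2,\dots,x_n)$ equals the corresponding coefficient of $g$ (whose coefficients are themselves polynomials in $x_1$ and the $a_k$'s, via $g(T)=f(T)/(T-x_1)$); this is a Vieta/Newton-type identity manipulation. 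And (b) one must confirm that the relation $f(X_1)=0$ is actually a consequence of the $S_k$-relations, so that the quotient $B_1 = A[X_1]/\langle f(X_1)\rangle$ genuinely sits inside $\D_A(f)$ --- this is the classical fact that each $x_i$ in $\D_A(f)$ is a root of $f$, which follows from $\prod_i (T-x_i) = T^n + \sum_k (-1)^k S_k T^{n-k} = f(T)$ in $\D_A(f)[T]$.

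An alternative, perhaps more economical route that avoids re-deriving the isomorphism by hand: argue directly that the power products $\{x_1^{m_1}\cdots x_n^{m_n} : 0\le m_j\le j-1\}$ both generate $\D_A(f)$ as an $A$-module and are $A$-linearly independent. Generation: in $\D_A(f)$ we have $x_1^n = -a_1 x_1^{n-1} - \cdots$, so $x_1$ satisfies a monic degree-$n$ relation; more generally $x_j$ satisfies a monic relation of degree $j$ with coefficients in $A[x_1,\dots,x_{j-1}]$, coming from the recursive factorization $f(T) = (T-x_1)\cdots(T-x_{j-1})\cdot(\text{monic of degree }n-j+1)$ evaluated using the $S_k$-relations. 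Rewriting any monomial modulo these relations reduces it to the claimed spanning set. Linear independence then follows by a counting/rank argument together with a base-ring faithful-flatness observation, or most simply by noting the construction is stable under base change and checking the case where $A$ is a field (or even $A=\Z[a_1,\dots,a_n]$, the generic case), where $\D_A(f)$ is a well-understood Galois-type algebra.

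I expect step (a) --- the Vieta identity bookkeeping relating the $S_k$ of all roots to the $S_k$ of the remaining roots after peeling off $x_1$, i.e.\ the identity $S_k(x_1,\dots,x_n) = S_k(x_2,\dots,x_n) + x_1\,S_{k-1}(x_2,\dots,x_n)$ --- to be the technical heart of the proof, but it is a routine computation rather than a genuine difficulty. The construction is manifestly explicit (all maps are given by polynomial formulas and the reductions are algorithmic), so the whole argument is constructively valid with no appeal to choice or to discreteness of $A$.
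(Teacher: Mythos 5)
The paper gives no proof of this lemma (it is simply declared ``standard''), so there is nothing to compare against line by line; your tower-of-simple-extensions induction is precisely the standard argument being alluded to, and it is correct and constructive. The two points you flag as needing verification --- that $f(X_1)$ lies in the ideal of symmetric relations (via $\prod_i(T-x_i)=f(T)$ in $\D(f)[T]$), and the triangular Vieta identity $S_k(x_1,\dots,x_n)=S_k(x_2,\dots,x_n)+x_1S_{k-1}(x_2,\dots,x_n)$, which lets you trade the $n$ relations for $f(X_1)=0$ together with the $n-1$ relations defining $\D_{B_1}(g)$ --- are exactly the right ones, and both go through by the Horner-type computation you indicate. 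One small bookkeeping slip: peeling off $x_1$ first yields the basis with $0\le m_1\le n-1$ and $0\le m_j\le j-2$ for $j\ge 2$, which is not literally the set stated in the lemma ($0\le m_j\le j-1$, so $m_1=0$ and $m_n\le n-1$); you should either run the induction by eliminating $x_n$ first, or observe that the defining ideal is $\S_n$-invariant, so the automorphism of $\D(f)$ induced by the cycle $x_1\mapsto x_n$, $x_j\mapsto x_{j-1}$ carries your basis to the stated one. Your alternative route is fine in spirit but its linear-independence step (``counting/rank argument'' or reduction to a field) is the one place that would need real care over an arbitrary commutative ring --- base change from the generic ring $\Z[a_1,\dots,a_n]$ is the clean way to make it rigorous --- whereas your main route needs no such detour, so I would stick with it.
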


Let \(\S_n\) be the \(n\)-th permutation group. It acts on
\(A[X_1,\dots,X_n]\) by setting \(\sigma X_i=X_{\sigma(i)}\), so
\(\sigma f(X_1,\dots,X_n)=f(X_{\sigma(1)},\dots, X_{\sigma(n)})\). We
have clearly \(\sigma(f g)=\sigma f \sigma g\) and
\(\sigma(f+g)=\sigma f+\sigma g\); if \(\deg f=0\), \(\sigma f= f\).

This group action leaves the ideal \(\la S_1+a_{1}, S_2-a_{2},
\dots, S_n+(-1)^{n-1} a_n\ra\) invariant, so it induces a group
action of \(\S_n\) on \(\D(f)=A[x_1,\dots,x_n]\), so that \(\sigma
f(x_1,\dots,x_n)=f(x_{\sigma(1)},\dots, x_{\sigma(n)})\).

The so called Theorem of Elementary Symmetric Polynomials is the
following lemma.

\begin{lemma} \label{fixSym}
If \(g\in A[X_1,\dots,X_n]\) is fixed by the action of \(\S_n\), then
\(g\in A[S_1,\dots,S_n]\) (the subring generated by the elementary
symmetric functions).
\end{lemma}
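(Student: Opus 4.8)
If $g\in A[X_1,\dots,X_n]$ is fixed by the action of $\S_n$, then $g\in A[S_1,\dots,S_n]$.

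\medskip

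The plan is to prove this by a double induction: on the number of variables $n$, and, for fixed $n$, on the degree of $g$. The classical argument of Gauss works verbatim in the constructive setting because all the operations involved---polynomial division by a monic polynomial, extraction of the leading term in a chosen monomial order, subtraction---are explicit, and because deciding whether a polynomial is zero reduces to deciding whether finitely many of its coefficients vanish; but we never need a test for zero in $A$, only the bookkeeping of which monomials can possibly occur, so no discreteness hypothesis on $A$ is required.

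First I would set up the lexicographic monomial order on $A[X_1,\dots,X_n]$ with $X_1>X_2>\dots>X_n$, and recall the standard observation: for a multi-index $(m_1,\dots,m_n)$ that is the exponent vector of the leading monomial of some $\S_n$-invariant polynomial, one must have $m_1\ge m_2\ge\dots\ge m_n$ (otherwise applying a transposition swapping two adjacent $X_i$ would produce a strictly larger monomial, contradicting invariance under that transposition). Given such a leading term $c\,X_1^{m_1}\cdots X_n^{m_n}$, the key algebraic identity is that the leading monomial of
\[
S_1^{m_1-m_2}\,S_2^{m_2-m_3}\cdots S_{n-1}^{m_{n-1}-m_n}\,S_n^{m_n}
\]
is exactly $X_1^{m_1}\cdots X_n^{m_n}$ (this is a direct computation: the leading monomial of $S_k$ is $X_1\cdots X_k$, and leading monomials multiply). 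Hence subtracting $c$ times this product from $g$ yields an $\S_n$-invariant polynomial whose leading monomial is strictly smaller in the lex order. Since the lex order is a well-order on the finite set of monomials below any fixed bound---and the total degree is non-increasing under this step, bounding the set of relevant monomials---the process terminates, expressing $g$ as a polynomial in $S_1,\dots,S_n$ with coefficients in $A$.

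The step I expect to require the most care is the termination/well-foundedness argument, since in a constructive setting one cannot appeal to ``minimal counterexample'' but must exhibit an explicit decreasing measure. Here the measure is the leading monomial in the lex order, which lies in the finite, explicitly bounded set of monomials of total degree at most $\deg g$; descent in a well-order on a finite set is constructively unproblematic. An alternative that avoids even this subtlety is to phrase the whole thing as the double induction on $(n,\deg g)$ mentioned above, reducing the $n$-variable case to the $(n-1)$-variable case by specializing $X_n\mapsto 0$ (which sends $S_k(X_1,\dots,X_n)$ to $S_k(X_1,\dots,X_{n-1})$ for $k<n$ and $S_n\mapsto 0$): this shows $g(X_1,\dots,X_{n-1},0)$ is a polynomial in the first $n-1$ elementary symmetric functions, whence $g-\tilde g$ is divisible by $X_n$, and by symmetry by every $X_i$, hence by $S_n=X_1\cdots X_n$; writing $g-\tilde g=S_n\cdot g'$ with $g'$ again $\S_n$-invariant of strictly smaller degree closes the induction. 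I would present whichever of these two routes reads most cleanly; both are entirely explicit.
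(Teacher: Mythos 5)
The paper gives no proof of Lemma \ref{fixSym}: it is simply quoted as the standard Theorem of Elementary Symmetric Polynomials, so there is no argument of the authors' to compare yours against. Both routes you sketch are the classical ones and both are constructively sound, so your proposal is correct. The one point I would urge you to make fully explicit in the first route is not termination (which, as you say, is harmless on a finite set of monomials) but how to run Gauss's descent \emph{without a zero test in} $A$ --- the paper never assumes $A$ discrete, so you cannot literally ``find the leading monomial'' of $g$. The fix is the bookkeeping you allude to: enumerate the finitely many weakly decreasing exponent vectors $\lambda$ of total degree at most $\deg g$ in decreasing lexicographic order and, at each step, subtract $c_\lambda\, S_1^{\lambda_1-\lambda_2}\cdots S_{n-1}^{\lambda_{n-1}-\lambda_n}S_n^{\lambda_n}$ where $c_\lambda$ is the \emph{current} coefficient at $X^\lambda$, whether or not it is zero. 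Invariance of the remainder guarantees that the coefficient then vanishes on the whole orbit of every vector already processed, and since any unsorted exponent vector lies lexicographically below its sorted rearrangement, the final remainder has all coefficients zero; no case distinction on elements of $A$ is ever needed. Your second route (specializing $X_n\mapsto 0$, subtracting, and dividing by $X_1\cdots X_n$) is equally valid; there you should carry along the standard weighted-degree bound on the expression in $S_1,\dots,S_{n-1}$ produced by the induction hypothesis, so that the quotient $g'$ really has strictly smaller degree and the double induction closes.
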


In that case, the image of \(g\) in the quotient \(\D(f)\) is in \(A\).
 
However when we consider the induced action in the quotient ring
\(\D(f)\), it can happen that some \(g\in\D(f)\) is fixed by this  
action, but it is not in \(A\), as it is shown with the following
example.

\mni\textit{Example}. Let \(A=\F_2(u)\) (here \(\F_2\) is the Galois
field with two elements), and \(f(T)=T^2-u\). Then
\(\D(f)=A[X_1,X_2]\flr{X_1^2-u, X_2-X_1}\).  Then the whole \(\D(f)\)
is fixed by~\(\S_2\).

\mni Nevertheless we have the useful following result.

\begin{lemma}\label{idemfix} Assume \(A\) has only \(0\) and \(1\) as 
idempotents, then every idempotent \(e\in \D(f)\) invariant by the
action of \(\S_n\) belongs to \(A\).
\end{lemma}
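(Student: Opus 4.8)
The plan is to exploit the basis of $\D(f)$ over $A$ given by the power products $x_1^{m_1}\cdots x_n^{m_n}$ with $0\le m_j\le j-1$, together with a "norm/averaging" device under the $\S_n$-action. Write $e=\sum_\alpha c_\alpha x^\alpha$ in this basis. Since $e$ is $\S_n$-invariant and $e^2=e$, I would first reduce to showing that $e$ lies in the subring of $\S_n$-invariant elements that can be detected inside $A$. The key observation is that for a symmetric element $g$, the trace form behaves well: consider the $A$-linear endomorphism $\mu_e$ of multiplication by $e$ on the free $A$-module $\D(f)$. Because $e$ is idempotent, $\D(f)=e\D(f)\oplus(1-e)\D(f)$ as $A$-modules, and $\mathrm{Tr}(\mu_e)$ is the rank polynomial data of Lemma \ref{idematrix}; in particular $\mathrm{Tr}(\mu_e)$ is (the integer image of) a basic system of orthogonal idempotents of $A$, hence, since $A$ has only $0$ and $1$ as idempotents, $\mathrm{Tr}(\mu_e)\in\{0,1,\dots,n!\}\subseteq A$ is a well-defined constant and $e\D(f)$ has constant rank.

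Next I would use the symmetry directly. Pick any point of the "universal splitting": the $n$ elements $x_1,\dots,x_n$ are the roots of $f$ in $\D(f)$, and $\S_n$ permutes them transitively. Consider the $A$-subalgebra $A[x_1]\subseteq\D(f)$; it is $A[X]/\langle f\rangle$, free of rank $n$. By Proposition \ref{factoridem} (applied over $A$, which has only trivial idempotents, and observing that a factorization $f=g\cdot h$ with $\gcd(\bar g,\bar h)=\bar 1$ would require a nontrivial idempotent in $A[x_1]$ lying below an idempotent of $\D(f)$), any idempotent of $A[x_1]$ is $0$ or $1$. The strategy is then: an $\S_n$-invariant idempotent $e$, when restricted through the transitivity of the action, must have a "uniform" behaviour at each root $x_i$; more precisely, evaluate $e$ under the quotient maps $\D(f)\to A[x_i]\cong A[X]/\langle f\rangle$ — all of which are intertwined by $\S_n$ — to see that $e$ maps to the same idempotent in each, and that idempotent is $0$ or $1$.

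To finish, I would argue that an $\S_n$-invariant element of $\D(f)$ which maps to $0$ (resp. to $1$) in $A[x_1]$ must actually equal $0$ (resp. $1$) in $\D(f)$. For this, combine the rank computation above with the structure of the invariant subring: by Lemma \ref{fixSym} the "genuinely symmetric" part lies in $A$, and the pathological invariants (as in the $\F_2(u)$ example) are ruled out once we know $e$ is idempotent and $A$ has no nontrivial idempotents — an idempotent cannot be one of those inseparable-type invariants because those square to something not equal to themselves unless already in $A$. Concretely: write $e$ in the monomial basis, let $\S_n$ act, and conclude $e\in A[S_1,\dots,S_n]$ modulo the "bad" directions; then $e$ idempotent in $A$ forces $e\in\{0,1\}\subseteq A$.

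The main obstacle I expect is precisely dealing with the non-separable phenomenon illustrated by the example $f(T)=T^2-u$ over $\F_2(u)$: there the whole algebra is $\S_n$-fixed, so "invariant implies in $A$" is false in general, and the proof must genuinely use idempotence (not just symmetry). The crux will be showing that in that fixed-but-not-in-$A$ part, the only idempotents are $0$ and $1$ — equivalently, that the obstruction to descent is a nilpotent-type or purely inseparable phenomenon that the Boolean algebra cannot see. I would handle this by passing to $\D(f)/\sqrt{\langle 0\rangle}$ (using Lemma \ref{idemnilpot2}, which says idempotents lift uniquely modulo the nilradical, so $\B(\D(f))=\B(\D(f)/\sqrt{\langle 0\rangle})$), where the inseparability collapses, reducing to a reduced ring in which the symmetric-invariant subring is cleanly $A$ plus a radical ideal, and there "$\S_n$-invariant idempotent" does descend to $A$.
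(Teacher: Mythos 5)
There is a genuine gap: your plan circles the right obstacle (the example $f(T)=T^2-u$ over $\mathbb{F}_2(u)$, where the whole algebra is fixed by $\mathfrak{S}_2$) but never actually overcomes it. Your closing step claims that after passing to $D_A(f)/\sqrt{\langle 0\rangle}$ ``the symmetric-invariant subring is cleanly $A$ plus a radical ideal'' and that invariant idempotents then descend; this is false as stated, since the $\mathbb{F}_2(u)$ example is already reduced (it is a field) and its invariant subring is the entire algebra, not $A$ plus anything radical. The trace/rank computation via Lemma \ref{idematrix} only tells you $e\cdot D_A(f)$ has constant rank, which does not force $e\in A$ unless that rank is $0$ or $n!$. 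The appeal to Proposition \ref{factoridem} is also out of context: that result is proved under the standing hypothesis that $A$ is a nontrivial residually discrete local ring, whereas in this section $A$ is an arbitrary ring; and the maps you invoke from $D_A(f)$ onto $A[x_i]$ do not exist ($A[x_1]$ is a subalgebra, not a retract). So none of the three threads closes the argument.

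The actual proof is a one-line norm trick that you gesture at (``norm/averaging device'') but do not execute. Lift $e$ to \emph{any} polynomial $E\in A[X_1,\dots,X_n]$ (not required to be symmetric), and form $E^{*}=\prod_{\sigma\in\mathfrak{S}_n}E^{\sigma}$. This product \emph{is} a symmetric polynomial, so by Lemma \ref{fixSym} it lies in $A[S_1,\dots,S_n]$, and hence its image $e^{*}$ in $D_A(f)$ lies in $A$. Since the $\mathfrak{S}_n$-action commutes with the quotient map, $e^{*}=\prod_{\sigma}e^{\sigma}=e^{n!}=e$, using exactly the two hypotheses you identified as essential: invariance of $e$ in the quotient (each $e^{\sigma}=e$) and idempotence ($e^{n!}=e$). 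This is why the inseparability pathology is harmless: the descent happens at the level of the polynomial ring, where Lemma \ref{fixSym} is unconditional, and idempotence is only used to collapse the $n!$-th power at the end.
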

\begin{proof} 
Let \(E \in A[X_1, \ldots, X_n]\), be such that its image in \(\D(f)\)
is \(e\). By the elementary symmetric functions theorem
\(E^{*}:=\prod_{\sigma\in \S_n} E^{\sigma} \in A[S_1,\dots,S_n]\).  We
call \(e^{*}\) the image of \(E^{*} \) in \(\D(f)\), we get
\(e^{*}= \prod_{\sigma\in \S_n}e^{\sigma}=e^{n!}=e\), and we are done. 
\end{proof}


When \(A\) is discrete, so are \(\B(A)\), \(\D(f)\) and
\(\B(\D(f))\). Here is a more subtle result.

\begin{lemma}\label{booldisc} Assume \(A\neq 0\) has only \(0\) and \(1\) as 
idempotents. Then   \(\B(\D(f))\) is discrete.
\end{lemma}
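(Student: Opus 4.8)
The plan is to show that the boolean algebra $\B(\D(f))$ is discrete, i.e., that we can decide equality of any two idempotents $e,e'\in\D(f)$. Since $e=e'$ iff $e\oplus e'=0$ and $e\oplus e'$ is again an idempotent, it suffices to give a test deciding whether a given idempotent $e\in\D(f)$ is $0$. Because $\D(f)$ is a free $A$-module of known finite rank $n!$, with the explicit basis of power products, writing an element as $0$ amounts to deciding whether finitely many coordinates in $A$ vanish; but $A$ need not be discrete, so we cannot test this directly. Instead, the idea is to reduce to the idempotents fixed by $\S_n$, where Lemma~\ref{idemfix} applies and brings us back into $A$, and then to use the hypothesis that $A$ has only $0$ and $1$ as idempotents (so $\B(A)=\{0,1\}$ is discrete).

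First I would recall the norm-type construction from the proof of Lemma~\ref{idemfix}: given an idempotent $e\in\D(f)$, its ``norm'' $e^{*}:=\prod_{\sigma\in\S_n}e^{\sigma}$ is an idempotent fixed by $\S_n$, hence lies in $A$ by Lemma~\ref{idemfix}, and therefore equals $0$ or $1$ by hypothesis on $A$. The key observation is then the dichotomy: if $e^{*}=1$, then each factor $e^{\sigma}$ is invertible (it divides $1$), hence $e^{\sigma}=1$ for all $\sigma$ (an invertible idempotent is $1$), so in particular $e=1$. So from the test ``$e^{*}=0$ or $e^{*}=1$'' in $A$ we get the partial information: either $e^{*}=1$, in which case $e=1$; or $e^{*}=0$. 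The remaining task is to decide, when $e^{*}=0$, whether $e=0$ or $e\neq 0$ --- but this is not automatic, since $e$ being a zero-divisor annihilated by its conjugates does not force $e=0$.

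To finish I would iterate/refine this idea using the structure of $\D(f)$ as built from a tower of universal decomposition algebras, or equivalently exploit that $\D(f)$ is itself (a quotient presentation of) an algebra to which the structure theory applies after passing modulo a suitable ideal; the cleanest route is probably: consider, for each idempotent $e$, not just the full norm but the partial products over the cosets of the stabilizer, or apply the argument recursively to the ``localized'' pieces $\D(f)[1/e]$ and $\D(f)[1/(1-e)]$, reducing the rank at each stage. Concretely, one can argue by induction on $n$: the map $\D_A(f)\to\D_{A[x_1]}(f/(T-x_1))$ exhibits $\D(f)$ as a universal decomposition algebra of degree $n-1$ over $A[x_1]=A[X]/\la f\ra$, and idempotents of the latter correspond (Proposition~\ref{factoridem}) to monic factorizations of $\ov f$ over the residue data; combined with the fact that $A[x_1]$, while not having only $0,1$ as idempotents in general, has $\B(A[x_1])$ controlled by factorizations of $f$, one pushes the discreteness down. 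The main obstacle I expect is exactly this inductive bookkeeping: making the passage from ``$A$ has only trivial idempotents'' to a usable hypothesis on $\B(A[x_1])$ precise enough that the norm trick can be reapplied, without circularity, at each level of the tower --- in other words, finding the right induction hypothesis that is preserved under the extension $A\rightsquigarrow A[X]/\la f\ra$.
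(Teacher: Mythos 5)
There is a genuine gap, and you have located it yourself: the norm trick $e^{*}=\prod_{\sigma}e^{\sigma}$ only decides whether $e=1$ (and, applied to $1-e$, whether $e=0$ \emph{in the favourable branch}); when both products come out $0$ you learn nothing, and your plan for the remaining case --- partial products over cosets, or an induction along the tower $D_A(f)\cong D_{A[x_1]}(f/(T-x_1))$ --- is left as an unresolved ``bookkeeping'' problem. It is not clear it can be made to work without circularity: the induction needs discreteness of $\B(A[x_1])$, which is not given by the hypothesis that $A$ has only trivial idempotents, so you would first have to prove a statement of the same difficulty as the lemma itself one level down.

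The missing idea is Lemma~\ref{idematrix}, and it bypasses the Galois action entirely. Since $D_A(f)$ is a \emph{free} $A$-module of rank $m=n!$, multiplication by an idempotent $e$ is given by an idempotent matrix $F\in A^{m\times m}$, and the coefficients of $P_F(T)=\det\bigl(\I_m+(T-1)F\bigr)=\sum_{i}e_iT^i$ form a basic system of orthogonal idempotents of the base ring $A$. By hypothesis each $e_i$ is (decidably) $0$ or $1$, and exactly one of them is $1$ since they are orthogonal and sum to $1$; hence $P_F(T)=T^{r}$ with $r$ explicitly computable. Finally $e=0$ if and only if $r=0$: indeed $e_0=\det(\I_m-F)$, and if this is invertible then $(\I_m-F)F=0$ forces $F=0$, i.e.\ $e=0$. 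Combined with your (correct) reduction of discreteness to the test ``$e=0$'' via $e\oplus e'$, this finishes the proof. Note that this argument uses nothing about the universal decomposition algebra beyond its being a finitely generated free $A$-module, and it proves the stronger remark following the lemma in the paper: if $\B(A)$ is discrete then so is $\B(D_A(f))$ --- which is exactly the induction hypothesis your tower argument was missing.
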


\begin{proof} 
Let \(e\in\B(\D(f))\). Consider the matrix \(F\) representing the
multiplication by \(e\). Then \(e=0\) if and only if the projective
module \(e \D(f)\) has constant rank \(0\), which is equivalent
to \(P_{F}(T)=1\). And we know from Lemma \ref{idematrix} that
\(P_{F}(T)=T^r\) for some \(r\).
\end{proof}

A similar proof shows more generally that if \(\B(A)\) is discrete
then  \(\B(\D(f))\) is discrete.

\begin{defi} An idempotent in \(\D(f)\) is said to be a
\emph{Galois idempotent} when its orbit is a basic system of
orthogonal idempotents. 
\end{defi}

\begin{lemma}\label{bool} Assume \(A\) has only \(0\) and \(1\) as 
idempotents, and let   \(e\in\D(f)\) be an idempotent. Then there
exists a Galois idempotent \(h\in\D(f)\) such that  
\(e\) is a sum of conjugates of~\(h\).
\end{lemma}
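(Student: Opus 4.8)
The plan is to work with the Boolean algebra $\B(\D(f))$, which by Lemma~\ref{booldisc} is discrete, and to exploit the $\S_n$-action. Given the idempotent $e$, I would first consider the finite set of conjugates $\{e^\sigma : \sigma \in \S_n\}$; since $\B(\D(f))$ is discrete this is a finitely enumerable (hence bounded) subset of a discrete Boolean algebra, so the subalgebra it generates is a finite discrete Boolean algebra. Therefore it is isomorphic to some $\F_2^m$ and admits a basic system of orthogonal indecomposable idempotents $\{r_1,\dots,r_m\}$, each $r_i$ being a "conjunction" (product of some $e^\sigma$ and some $1-e^\tau$) of the conjugates of $e$. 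The key observation is that the $\S_n$-action permutes the conjugates $\{e^\sigma\}$, hence permutes the atoms of the generated subalgebra: $\S_n$ acts on the index set $\{1,\dots,m\}$.

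Next I would decompose this permutation action of $\S_n$ on $\{r_1,\dots,r_m\}$ into orbits. For each orbit $O \subseteq \{1,\dots,m\}$, the element $h_O := \sum_{i\in O} r_i$ is an idempotent whose $\S_n$-orbit is exactly $\{h_O : O' \text{ an orbit}\}$-translates — more precisely its stabilizer is large and its conjugates $\{h_O^\sigma\}$ are pairwise orthogonal (they are sums over disjoint orbits' translates... actually sums over the single orbit, and distinct $\sigma$ either fix $h_O$ or move it to a sum over the same orbit, so all conjugates coincide). Hmm — I need to be more careful: I want a Galois idempotent, i.e. one whose orbit is a basic system of orthogonal idempotents summing to $1$. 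The right construction: pick the atom $r_1$ (say $1 \in \{1,\dots,m\}$ lies in the support of $e$, which I can arrange after discarding the trivial cases $e=0$, handled by taking $h=0$, and $e=1$, handled by $h=1$ since by Lemma~\ref{idemfix} a fixed idempotent lies in $A$ which has only $0,1$); then set $h := r_1$ and let $h_1=r_1,\dots,h_k$ be the distinct conjugates of $r_1$ under $\S_n$. Because $r_1$ is an atom, any two conjugates are either equal or orthogonal, and their sum $\sum_j h_j$ is $\S_n$-invariant, idempotent, hence by Lemma~\ref{idemfix} lies in $A$, hence equals $0$ or $1$; it is nonzero since $r_1 \neq 0$, so it equals $1$. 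Thus $\{h_1,\dots,h_k\}$ is a basic system of orthogonal idempotents, i.e. $h := r_1$ is a Galois idempotent.

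Finally, I must show $e$ is a sum of conjugates of $h$. Since $e$ lies in the subalgebra generated by its conjugates, $e = \sum_{i \in J} r_i$ for some subset $J \subseteq \{1,\dots,m\}$; but every $r_i$ is a conjugate of $r_1 = h$ (because $\S_n$ acts transitively on... no — the $r_i$ need not form a single orbit). This is the main obstacle: $e$ is a sum of atoms, but those atoms need not all be conjugate to the particular atom $r_1$ I chose. The fix is to choose $r_1$ more cleverly: take $h$ to be a \emph{common refinement}, e.g. an atom of the subalgebra generated by $\{e^\sigma\}$ that lies below $e$; then $e = \sum(\text{atoms below } e)$, and the atoms below $e$ are precisely the $\S_n$-translates of $h$ that happen to lie below $e$ — but I still need all atoms of the subalgebra to be conjugate. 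They are: $\S_n$ acts on the generating set $\{e^\sigma\}$ transitively (it's an orbit), and a standard argument shows it therefore acts transitively on the atoms of the generated subalgebra — an atom is determined by the sign pattern $(\epsilon_\sigma)$ with $r = \prod_\sigma (e^\sigma)^{\epsilon_\sigma}$, and applying $\tau \in \S_n$ permutes these patterns; transitivity on atoms would need that all realized patterns lie in one orbit, which is \emph{not} automatic. So the honest route is: let $h$ be one atom below $e$, form its orbit $\{h_1,\dots,h_k\}$ which is a basic system by the $A = \{0,1\}$ argument above, and observe $h \preceq e$ forces (applying $\sigma$) that each $h_j \preceq e^\sigma$ for suitable $\sigma$, but we want $h_j \preceq e$. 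Instead: set $h' := \bigvee_{j} h_j \wedge e$... I would ultimately argue that $e \wedge h_j \in \{0, h_j\}$ for each $j$ since $h_j$ is an atom of a refinement of the algebra containing $e$, so $e = \sum_{j : h_j \preceq e} h_j$ is exactly a sum of conjugates of $h$. Ensuring $h$ is an atom of a subalgebra refining the one generated by the conjugates of $e$ (e.g. the subalgebra generated by all conjugates of all atoms $r_i$, which is still finite and discrete and $\S_n$-stable) is what makes every $e \wedge h_j$ land in $\{0,h_j\}$, and that bookkeeping is the delicate part of the write-up.
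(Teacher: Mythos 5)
You take essentially the same route as the paper: pass to the Boolean subalgebra $B$ of $\B(\D(f))$ generated by the $\S_n$-orbit of $e$ (finitely generated and discrete by Lemma~\ref{booldisc}), pick an atom $h$ of it, observe that the orbit of $h$ consists of pairwise orthogonal idempotents whose sum is $\S_n$-invariant and hence equals $1$ by Lemma~\ref{idemfix}, and write $e$ as the sum of those conjugates of $h$ lying below it. Two remarks on the loose ends you flag. First, the ``delicate bookkeeping'' at the end is unnecessary: since the generating set (the orbit of $e$) is $\S_n$-stable, $B$ itself is $\S_n$-stable and each $\sigma$ restricts to an automorphism of $B$; hence every conjugate $h_j$ of the atom $h$ is again an atom of the \emph{same} algebra $B$, so $e\cdot h_j\in\{0,h_j\}$ automatically and $e=\sum_j e\cdot h_j=\sum_{h_j\preceq e}h_j$. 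No refinement is needed, no transitivity of $\S_n$ on the atoms is needed, and $h$ need not even be chosen below $e$ (the paper takes an arbitrary atom of $B$). Second, disposing of the case $e=0$ by ``taking $h=0$'' is a slip: $0$ is not a Galois idempotent, since its orbit sums to $0$ rather than $1$; but $e=0$ is the empty sum of conjugates of any Galois idempotent, and in any case the uniform argument above already covers it.
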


\begin{proof} 
Thanks to Lemma \ref{booldisc} the boolean algebra \(\B(\D(f))\) is
discrete.  Let \(B\) be the boolean subalgebra of \(\B(\D(f))\)
generated by the orbit of \(e\) under \(\S_{n}\).  Since \(B\) is
discrete and finitely generated we can find an indecomposable element
of \(B\).  Let \(h_{1}\) be such a minimal nonzero element of \(B\).
For every \(g\in B\), we have \(g h_{1}=0\) or \(h_{1}\).  In
particular the orbit \(h_{1},\ldots,h_{m}\) of \(h_{1}\) is made of
orthogonal idempotents.  So the sum of this orbit is a nonzero
idempotent of \(A\), necessarily equal to 1 by Lemma \ref{idemfix}.
So \(h_{1}\) is a Galois idempotent and for every \(g\in B\),
\(g=\sum_{i=1}^m g h_{i}=\sum_{g h_{i}\neq 0}h_{i}\).
\end{proof}


\section{Henselian Local Rings}
\label{SecHenselian}

\begin{defi}\label{defHensel} 
Let \(A\) be a local ring with maximal ideal \(\m\). We
say that \(A\) is {\em Henselian} if every monic polynomial
\(f(X)=X^n+\cdots+a_1 X+a_0\in A[X]\) with \(a_1\in A^\times\) and
\(a_0\in\m\) has a root in \(\m\).
\end{defi}

It is easy to show that if \(f(X)=X^n+\cdots+a_1 X+a_0\) with
\(a_1\in A^\times\) and \(a_0\in\m\) has a root in \(\m\), then this
root is unique (see Proposition \ref{hensel-nonmonic}).

\begin{context} \label{context5}
In the whole Section \ref{SecHenselian} \(A\) will 
be a nontrivial Henselian local ring with maximal ideal \(\m\) and
residue field \(\k\). We denote by \(a\in A\mapsto \ov a\in\k\) the
quotient map, and extend it to a map \(A[X]\dans \k[X]\) by setting
\(\ov{\sum_i a_i X^i}=\sum{\ov{a_i} X^i}\).
\end{context}

Note that a nontrivial quotient ring of a Henselian local ring is
also a Henselian local ring, with same residue field.
So, as noticed at the beginning of Section \ref{JRFA}, we can
restrict our attention to $A$-algebras containing $A$.

\ms Definition \ref{defHensel} says that it is possible to lift
in the local Henselian ring
a residual simple root of a monic polynomial. 
In this section we show that more general liftings are available
in a Henselian local ring.

Paragraph \ref{AFUG} is devoted to the lifting of any residual simple root
of any univariate polynomial.

In Paragraph \ref{subsecFact} we prove the lifting of coprime factorizations of a monic univariate polynomial and the lifting of idempotents in finite $A$-algebras.

In Paragraph \ref{subsecFact2} we prove the lifting of coprime factorizations of any univariate polynomial.

\subsection{A first useful generalization} \label{AFUG}

\begin{defi} We shall denote \(A(X)\) the Nagata localization of
\(A[X]\), \ie the localization with respect to the monoid of primitive
polynomials (a polynomial is primitive when its coefficients generate
the ideal \(\la 1 \ra\)). It is well known that \(A[X]\subseteq
A(X)\).
\end{defi}

\begin{lemma}[context \ref{context5}] Let \(f(X)=a_n X^n +\cdots+a_1 X+a_0\), with
\(a_1\in A^\times\) and \(a_0\in\m\). There exists a monic polynomial
\(g(X)\in A[X]\), \(g(X)=X^n+\cdots+b_1 X+b_0\), with \(b_1\in
A^\times\) and \(b_0\in\m\), such that the following equality holds in
\(A(X)\):
\[a_0 g(X)=(X+1)^n f\left( {-a_0 a_1^{-1}\over X+1}
\right).\] 
\end{lemma}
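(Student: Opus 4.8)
The plan is to verify the claimed identity by a direct algebraic manipulation, and then read off the required properties of the coefficients $b_0,b_1$ from it. First I would write $f(T)=\sum_{i=0}^{n}a_i T^{i}$ and substitute $T=-a_0 a_1^{-1}/(X+1)$. Multiplying by $(X+1)^n$ clears all denominators, giving
\[
(X+1)^n f\!\left(\frac{-a_0 a_1^{-1}}{X+1}\right)=\sum_{i=0}^{n} a_i\,(-a_0 a_1^{-1})^{i}\,(X+1)^{n-i},
\]
which is a genuine polynomial in $A[X]$ — note no inversion of a primitive polynomial is needed here, only the invertibility of $a_1$ in $A$, so the equality even holds in $A[X]$. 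The coefficient of the top-degree term $X^n$ in this sum is $a_0$ (the $i=0$ term), so dividing the whole expression by $a_0$ — which is legitimate in $A(X)$ once we check the result is a well-defined element there; in fact it will turn out to be a genuine polynomial — produces a \emph{monic} polynomial $g(X)$ of degree $n$. This defines $g$ and establishes the displayed identity $a_0\cdot g(X)=(X+1)^n f(-a_0 a_1^{-1}/(X+1))$.

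Next I would extract $b_0=g(0)$ and $b_1=$ (coefficient of $X$ in $g$) and check $b_0\in\m$, $b_1\in A^\times$. Setting $X=0$ in the identity gives $a_0\cdot b_0 = 1^n\cdot f(-a_0 a_1^{-1}) = \sum_{i\ge 0} a_i(-a_0 a_1^{-1})^i = a_0 + \sum_{i\ge 1} a_i(-a_0 a_1^{-1})^i$; since every term on the right is divisible by $a_0$, we may cancel $a_0$ (this cancellation is exactly where passing through $A(X)$, or rather just recognizing $g\in A[X]$ and using that $A[X]$ embeds in $A(X)$, makes things clean) and obtain $b_0 = 1 + \sum_{i\ge 1} a_i(-1)^i a_0^{i-1} a_1^{-i}$. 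Every summand with $i\ge 2$ lies in $\m$ because $a_0\in\m$; the $i=1$ summand is $-a_1\cdot a_1^{-1}=-1$; so $b_0 = 1 - 1 + (\text{terms in }\m) = (\text{terms in }\m)\in\m$, as desired. For $b_1$ I would differentiate the identity in $X$ (formally) and evaluate at $X=0$, or equivalently expand $(X+1)^{n-i}=1+(n-i)X+\cdots$ and collect the linear term; the linear coefficient of $a_0 g(X)$ is then $\sum_{i=0}^n a_i(-a_0 a_1^{-1})^i(n-i)$, whose $i=0$ term is $na_0$ and whose $i=1$ term is $a_1(-a_0 a_1^{-1})(n-1)=-(n-1)a_0$, with all $i\ge 2$ terms in $a_0\m$; dividing by $a_0$ gives $b_1 = n - (n-1) + (\text{terms in }\m) = 1 + (\text{terms in }\m)$, hence $b_1\in A^\times$ since $A$ is local.

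The only genuinely delicate point is the legitimacy of the division by $a_0$: one must argue that $(X+1)^n f(-a_0a_1^{-1}/(X+1))$ is divisible by $a_0$ in $A[X]$ (so that $g\in A[X]$ and the identity holds literally in $A[X]\subseteq A(X)$), rather than merely as a formal fraction. This is immediate once one has the explicit expansion $\sum_i a_i(-a_0a_1^{-1})^i(X+1)^{n-i}$, since for each $i$ the factor $(-a_0a_1^{-1})^i(X+1)^{n-i}+$ (from the matching low-order terms of $f$) combines so that the whole sum starts with $a_0X^n$ and, term by term after regrouping, is manifestly a multiple of $a_0$; indeed $g(X)=\sum_{i=0}^n a_i(-1)^i a_0^{i-1}a_1^{-i}(X+1)^{n-i}$ visibly has coefficients in $A$. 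So the proof reduces to this single explicit computation plus the two evaluations at $X=0$ and of the linear coefficient; everything else is bookkeeping. I would present it essentially in that order: display the expansion, define $g$, then read off $b_0$ and $b_1$.
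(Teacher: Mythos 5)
Your proposal is correct and follows essentially the same route as the paper: expand \((X+1)^n f\bigl(-a_0a_1^{-1}/(X+1)\bigr)=\sum_i a_i(-a_0a_1^{-1})^i(X+1)^{n-i}\), observe that the \(i=0\) and \(i=1\) terms contribute \(a_0\bigl((X+1)^n-(X+1)^{n-1}\bigr)\) while the \(i\ge 2\) terms carry a factor \(a_0^2\), define \(g\) by the resulting explicit formula, and read off \(b_0\in\m\) and \(b_1\in 1+\m\subseteq A^\times\). Your closing paragraph correctly identifies and resolves the only delicate point (that ``dividing by \(a_0\)'' must mean exhibiting \(g\) explicitly in \(A[X]\), not cancelling a possibly non-regular element), which is exactly how the paper's proof handles it via its polynomial \(h(X)=X^n-X^{n-1}+a_0\ell(X)\) and \(g(X)=h(X+1)\).
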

\begin{proof}
We have 
\[\begin{aligned}
X^n f\left(\frac{-a_{0}\cdot
a_{1}^{-1}}{X}\right) & = a_{0} \left(X^n-X^{n-1}+a_{0}\cdot
\sum_{j=2}^{n}(-1)^j a_{j}a_{0}^{j-2} a_{1}^{-j} \cdot
X^{n-j}\right)\\
 & = a_{0} h(X)
\end{aligned}\]
with
\[ h(X)= X^n-X^{n-1}+a_{0}\cdot
\sum_{j=2}^{n}(-1)^j   a_{j}a_{0}^{j-2} a_{1}^{-j}  X^{n-j}
= X^n-X^{n-1}+a_{0} \ell(X) \]
We let \(g(X)=h(X+1)=X^n+\cdots+b_1 X+b_0\). It is a monic polynomial,
with constant term \(b_0=g(0)=h(1)=a_{0} \ell(1)\in\m\), and linear
term \(b_1=g'(0)=h'(1)=1+a_{0} \ell'(1)\in1+\m\).
\end{proof}

\begin{proposition}\label{hensel-nonmonic}
Let \(f(X)=a_n X^n +\cdots+a_1 X+a_0\), with \(a_1\in A^\times\)
and
\(a_0\in\m\). Then \(f\) has a unique root in \(\m\).
\end{proposition}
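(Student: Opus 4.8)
The plan is to reduce the non-monic statement to the already-established monic case (Definition \ref{defHensel}) via the auxiliary polynomial $g$ produced by the preceding lemma, and then separately handle uniqueness. For existence, the preceding lemma gives a monic $g(X)=X^n\dd+b_1\cdot X+b_0$ with $b_1\in A^\times$, $b_0\in\m$, satisfying $a_0\cdot g(X)=(X+1)^n\cdot f\bl(-a_0\cdot a_1^{-1}/(X+1)\br)$ in $A(X)$. Since $A$ is Henselian, $g$ has a root $\beta\in\m$; moreover $\beta+1\in A^\times$, so I can substitute $X=\beta$ into the displayed identity and deduce $a_0\cdot g(\beta)=0=(\beta+1)^n\cdot f\bl(-a_0\cdot a_1^{-1}/(\beta+1)\br)$, hence $f$ vanishes at $\alpha:=-a_0\cdot a_1^{-1}/(\beta+1)$. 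One must check that this $\alpha$ lies in $\m$: indeed $a_0\in\m$ and $a_1^{-1},(\beta+1)^{-1}\in A^\times$, so $\alpha\in\m$ as required.

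A small subtlety is that the identity lives in $A(X)$ rather than $A[X]$, so one should be careful that ``substituting $X=\beta$'' is legitimate: the Nagata localization $A(X)$ is obtained by inverting primitive polynomials, and evaluation at $\beta$ is only defined when the relevant primitive denominators become units in $A$ after the substitution. Here the only denominator appearing is a power of $X+1$, which is primitive, and $\beta+1\in A^\times$ since $\beta\in\m$; so the evaluation homomorphism $A(X)\to A$, $X\mapsto\beta$ is well-defined and the identity transports. Alternatively, to sidestep $A(X)$ entirely, one can clear denominators and verify the polynomial identity $a_0^{n}\,g(X)=(X+1)^n\cdot\tilde f$ directly in $A[X]$ where $\tilde f$ is $f$ with its argument rescaled and homogenized; this is the cleanest way to justify the evaluation.

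For uniqueness, suppose $\alpha_1,\alpha_2\in\m$ both satisfy $f(\alpha_i)=0$. I would write $0=f(\alpha_1)-f(\alpha_2)=(\alpha_1-\alpha_2)\cdot\bl(a_1+\sum_{k\ge 2}a_k\cdot(\text{sum of monomials of degree }k-1\text{ in }\alpha_1,\alpha_2)\br)$. Each such monomial is a product of elements of $\m$ (of total degree $k-1\ge 1$), hence lies in $\m$, so the second factor is $a_1+(\text{element of }\m)$; since $a_1\in A^\times$, this factor is a unit. Therefore $\alpha_1-\alpha_2=0$. This also recovers the uniqueness assertion in the monic case promised after Definition \ref{defHensel}.

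The main obstacle, such as it is, is purely bookkeeping: making the passage from the identity in $A(X)$ to a genuine evaluation at $\beta$ fully rigorous (the $A(X)$ vs.\ $A[X]$ point above), and keeping track that $\beta+1$ is invertible so that the formula for $\alpha$ makes sense and lands in $\m$. No deep idea is needed beyond the Henselian hypothesis applied to $g$; everything else is the elementary algebra of reducing the non-monic ``simple root'' situation to the monic one.
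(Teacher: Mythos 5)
Your proposal is correct and follows essentially the same route as the paper: existence by applying the Henselian hypothesis to the auxiliary monic polynomial $g$ from the preceding lemma and transporting the root back via $\alpha=-a_0\cdot a_1^{-1}/(\beta+1)$, and uniqueness by factoring $f(\alpha_1)-f(\alpha_2)$ as $(\alpha_1-\alpha_2)$ times a unit of the form $a_1+(\text{element of }\m)$, which is the same computation the paper performs via the expansion $f(X+Y)=f(X)+Y(f'(X)+Yf_2(X,Y))$. Your extra care about evaluating the $A(X)$-identity at $X=\beta$ (noting that both sides are in fact polynomials once denominators are cleared, and that $\beta+1$ is a unit) is a welcome clarification of a point the paper passes over silently.
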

\begin{proof} First we prove the uniqueness. Let us write $f(X+Y)=f(X)+Y(f'(X)+Y f_2(X,Y))$. If $f(a)=f(a+\mu)$ with $\mu\in m$ and $f'(\mu)\in A^\times$, we replace $X$
by $a$ and $Y$ by $\mu$ and we get $b \mu=0$ with $b\in A^\times$, so $\mu=0$.

Let \(g(X)\) be the polynomial associated to \(f\) by the
previous lemma, and \(\alpha\in\m\) its root. Then \((1+\alpha)\in
A^\times\); we put \(\beta={-a_0 a_1^{-1}\over \alpha+1}\), and we have
\(-a_0 g(\alpha)=(\alpha+1)^n f\left(\beta\right)\), so that
\(f(\beta)=0\).
\end{proof}

\begin{coro}[context \ref{context5}]\label{hensel-roots}
Let \(f(X)=a_n X^n +\cdots+a_0\in A[X]\) such that \(\ov f(X)\in\k[X]\) has a
simple root \(a\in\k\). Then there exists a unique root \(\alpha\in A\)
of \(f\) such that \(\ov\alpha=a\).
\end{coro}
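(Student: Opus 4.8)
The plan is to reduce Corollary \ref{hensel-roots} to Proposition \ref{hensel-nonmonic} by translating the polynomial so that the simple residual root $a$ is moved to $0$, and then checking that the translated polynomial has the shape required by that proposition (invertible linear coefficient, constant coefficient in $\m$).

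First, since $A$ is a nontrivial residually discrete local ring, we may lift $a\in\k$ to some element $\tilde a\in A$ with $\ov{\tilde a}=a$. Set $h(X)=f(X+\tilde a)=b_n\cdot X^n\dd+b_1\cdot X+b_0$. Then $\ov h(X)=\ov f(X+a)$, so $\ov h(0)=\ov f(a)=0$, which gives $b_0\in\m$; and $\ov h'(0)=\ov f{}'(a)$, which is nonzero because $a$ is a \emph{simple} root of $\ov f$ — hence $\ov{b_1}=\ov h'(0)\neq 0$, i.e.\ $b_1\in A^\times$. Thus $h$ satisfies exactly the hypotheses of Proposition \ref{hensel-nonmonic}, so $h$ has a unique root $\beta\in\m$. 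Setting $\alpha=\beta+\tilde a$, we get $f(\alpha)=h(\beta)=0$ and $\ov\alpha=\ov\beta+a=a$ (since $\ov\beta=0$).

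For uniqueness: if $\alpha'\in A$ is another root of $f$ with $\ov{\alpha'}=a$, then $\alpha'-\tilde a\in\m$ and $h(\alpha'-\tilde a)=f(\alpha')=0$, so $\alpha'-\tilde a$ is a root of $h$ in $\m$; by the uniqueness part of Proposition \ref{hensel-nonmonic} we get $\alpha'-\tilde a=\beta$, i.e.\ $\alpha'=\alpha$.

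I do not expect any serious obstacle here; the one point that must be handled with a little care in the constructive setting is the lifting of $a\in\k$ to $A$, which is available precisely because the Context for Section \ref{SecHenselian} assumes $A$ is residually discrete (indeed $\k=A/\m$, so an element of $\k$ is by definition given as the class of an element of $A$). The other point worth stating explicitly is why $\ov{b_1}\neq 0$: this is exactly the meaning of ``$a$ is a simple root of $\ov f$'', and it is needed to conclude $b_1\in A^\times$ via Lemma \ref{fideleplat}-style reasoning — concretely, $b_1\in A^\times$ iff $\ov{b_1}\in\k^\times$ iff $\ov{b_1}\neq 0$ since $\k$ is a discrete field. So the proof is essentially a one-line reduction once the translation is set up.
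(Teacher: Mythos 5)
Your proof is correct and is exactly the paper's argument: the paper's one-line proof reads ``Replace $X$ by $X+\gamma$ where $\ov\gamma=a$ and use the previous proposition,'' and you have simply spelled out the translation, the verification of the hypotheses of Proposition \ref{hensel-nonmonic}, and the uniqueness transfer. (The only cosmetic quibble: the equivalence $b_1\in A^\times\Leftrightarrow\ov{b_1}\neq 0$ is just invertibility modulo the Jacobson radical in a residually discrete local ring, not anything to do with Lemma \ref{fideleplat}.)
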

\begin{proof}
Replace $X$ by $X+\gamma$ where $\ov \gamma=a$ and use the previous proposition.
\end{proof}
%

\subsection{Universal decomposition algebra over a Henselian local ring}

Let \(f(T)=T^n+\cdots+a_1 T+a_0\in A[T]\) be a monic polynomial of degree
\(n\), and let \(D=\D(f)=A[x_1,\dots,x_n]\) be its universal decomposition
algebra.

It is easy to check that \(D/\m D\) is (isomorphic to) \(\Dk(\ov
f)\). The permutation group \(\S_n\) acts both on \(D\) and \(D/\m D\).

For every \(r(\X)=r(X_1,\dots,X_n)\in A[X_1,\dots,X_n]\) we denote by
\(r(\x)=r(x_1,\dots,x_n)\) its image in \(A[x_1,\dots,x_n]=D\), and
its image under the quotient map \(D\dans D/\m D\) is
\(\ov{r(\x)}=\ov{r}(\ov\x)\). The canonical map from \(\k[\X]=\k[X_1,\dots,X_n]\) to
\(D/\m D\) is denoted by \(\ov r(\X)\mapsto \ov{r(\x)}=\ov{r}(\ov\x)\).

The situation is summed-up by the following commutative diagram.
\[\begin{CD} 
         r=r(\X)\in A[\X] @>>> r(\x)\in D \\
             @VVV                   @VVV \\
\ov r=\ov r(\X)\in \k[\X] @>>> \ov{r(\x)}\in D/\m D 
\end{CD}\]

In Proposition \ref{idemlift2} we show that \(D\) 
admits lifting of idempotents modulo
\(\m D\).
This has tight connection with \cite[12.27]{LafonMarot}, 
which settles that  \(D\) is a finite product of local rings.
The result in \cite{LafonMarot} cannot be reached constructively, but  
Proposition \ref{idemlift2} implies that  \(D\) 
is decomposable, and Proposition 
\ref{extension2} tells us that if
\(\B(D)\) is finite (we only know it is bounded) then  
 \(D\) is a finite product of local rings. 
So we see that we will get finally good constructive versions of \cite{LafonMarot}'s 
result, but the general organization of the material is slightly 
different. 
 
Propositions \ref{idemgaloislift} and \ref{idemlift2}
give a useful constructive substitute 
for  \cite[12.27]{LafonMarot}.
Their proofs can be seen as extracting the constructive content of
the proof of \cite{LafonMarot}.

\begin{proposition}[context \ref{context5}, lifting a Galois idempotent of $D/\m D$]\label{idemgaloislift} ~

\noindent Let \(r(\X)\in A[\X]\) be such
that \(\ov{r(\x)}\in D/\m D\) is a Galois idempotent. Then there
exists \(e(\X)\in A[\X]\) such that \(e(\x)\) is a Galois idempotent of \(D\) with
\(\ov{e(\x)}=\ov{r(\x)}\).
More precisely, if the orbit of $\ov{r(\x)}$ is $[\ov{r(\x)},\sigma_2(\ov{r(\x)}),\ldots,\sigma_h(\ov{r(\x)})]$, then the orbit of \(e(\x)\) is $[e(\x),\sigma_2(e(\x)),\ldots,\sigma_h(e(\x))]$.
\end{proposition}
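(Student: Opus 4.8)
The plan is to lift the Galois idempotent one orthogonal piece at a time, using the ordinary Hensel property (Definition~\ref{defHensel}) packaged as idempotent lifting modulo nilpotents, and then to exploit the $\S_n$-action to turn a single lifted idempotent into a full basic system of orthogonal idempotents. First I would observe that $D$ is a free $A$-module of finite rank $n!$, so multiplication by $r(\x)$ is represented by a matrix $F\in A^{n!\times n!}$ with $F^2-F\in\m\cdot A^{n!\times n!}$ (because $\ov{r(\x)}$ is idempotent in $D\big/\m\cdot D$). By Corollary~\ref{inverse}~(1), each entry of $F^2-F$ lies in $\m$, and since $A$ is Henselian we have $\J_A=\m$ and, by Proposition~\ref{radical}~(1) applied to a suitable finite algebra (or directly by the matrix version of Cayley--Hamilton), the relevant obstruction is nilpotent in the appropriate quotient. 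Concretely, I would pass to $D\big/\m\cdot D=\Dk(\ov f)$, note $\ov{r(\x)}$ is already idempotent there, and instead lift through the chain of ideals: the key point is that in a Henselian local ring one can lift idempotents modulo $\m\cdot D$ for any \emph{finite} algebra $D$, and this is exactly what Proposition~\ref{idemlift2} will assert; but here we are proving the \emph{Galois} refinement, so I should not invoke \ref{idemlift2} and must argue by hand.

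So the honest route is: apply Corollary~\ref{hensel-roots} (lifting of simple roots) to the polynomial $P_F(T)-\text{(its reduction)}$, or better, use the Newton-iteration idea of Lemma~\ref{idemnilpot2} in the form adapted to $\m$. Since $A$ is Henselian, the univariate polynomial $Y^2-Y$ has, near each residual idempotent value, a lifting: more precisely, given $e_0(\x)\in D$ with $e_0(\x)^2-e_0(\x)\in\m\cdot D$, one wants $e(\x)\in D$ with $e^2=e$ and $e-e_0\in\m\cdot D$. Here I would use that $D$ is finite over the Henselian ring $A$: the element $e_0(\x)$ satisfies a monic polynomial over $A[\text{coefficients}]$, and one reduces the lifting of $e_0$ to lifting a simple residual root of a \emph{univariate} polynomial with coefficients in $D$ itself — namely $X^2-X$ has residual roots $0,1$ in $D\big/\m\cdot D$ localized appropriately, which are simple since the derivative $2X-1$ is a unit there. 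Applying Corollary~\ref{hensel-roots} over the (still Henselian, by Proposition~\ref{radical}~(4) and the Context remarks) algebra gives the idempotent lift $e(\x)$, and by construction $\ov{e(\x)}=\ov{r(\x)}$.

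For the Galois refinement, I would proceed as follows. Fix $\sigma\in\S_n$. The conjugate $\sigma(e(\x))$ is again an idempotent of $D$, and its reduction is $\sigma(\ov{r(\x)})=\sigma_j(\ov{r(\x)})$ for the appropriate $j$. Since the reductions $\ov{r(\x)},\sigma_2(\ov{r(\x)}),\dots,\sigma_h(\ov{r(\x)})$ form a basic system of orthogonal idempotents in $D\big/\m\cdot D$ (that is what ``Galois idempotent'' means, per the Definition preceding Lemma~\ref{bool}), the products $\sigma_i(e(\x))\cdot\sigma_j(e(\x))$ for $i\neq j$ lie in $\m\cdot D$, and are idempotents, hence lie in $\J_D\cap(\text{idempotents})$; by the ``idempotents are isolated'' Lemma~\ref{idemnilpot} applied in $D$ (using $\J_D=\sqrt{\m\cdot D}$ from Proposition~\ref{radical}~(1)), each such product is $0$. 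Similarly $\sum_i\sigma_i(e(\x))$ is an idempotent congruent to $1$ modulo $\m\cdot D$, hence equals $1$ by the same lemma. Therefore the orbit $[e(\x),\sigma_2(e(\x)),\dots,\sigma_h(e(\x))]$ is a basic system of orthogonal idempotents, i.e.\ $e(\x)$ is a Galois idempotent, and its orbit reduces entrywise to the orbit of $\ov{r(\x)}$ as claimed. The main obstacle is the first step — producing \emph{some} idempotent lift $e(\x)$ of $\ov{r(\x)}$ in $D$ using only the bare Henselian hypothesis on $A$ (rather than a black-box ``finite algebras over Henselian rings lift idempotents'' statement, which is logically downstream); once that is in hand, the orthogonality and summation-to-one facts are immediate consequences of Lemma~\ref{idemnilpot} and Proposition~\ref{radical}, and the orbit of a lift reduces to the orbit of the reduction because conjugation commutes with the quotient map $D\to D\big/\m\cdot D$.
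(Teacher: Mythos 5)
There is a genuine gap, and it sits exactly where you flagged ``the main obstacle'': producing \emph{some} idempotent lift of \(\ov{r(\x)}\) in \(D\). None of the three routes you sketch for this step works. The Newton iteration of Lemma~\ref{idemnilpot2} only converges when \(e_0^2-e_0\) is nilpotent; here it merely lies in \(\m\cdot D\), which is not nilpotent, so the iteration improves the approximation modulo \(\m^{2^k}\cdot D\) without ever terminating (the ring is not complete). Applying Corollary~\ref{hensel-roots} to ``\(X^2-X\) over \(D\)'' requires \(D\) to be a Henselian local ring, but \(D\) is neither local (it is only semi-local, precisely because \(\ov{r(\x)}\) is a nontrivial idempotent of \(D\big/\m\cdot D\)) nor known to be Henselian; the statement that finite local \(A\)-algebras are Henselian is Theorem~\ref{localImpliqueHenselian}, which depends on Theorem~\ref{idemlift3}, which depends on Proposition~\ref{monicfactorlift}, which depends on the proposition you are proving --- so that appeal is circular. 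The Hensel hypothesis on \(A\) only lets you lift simple residual roots of \emph{univariate polynomials with coefficients in \(A\)}, and the missing idea is precisely how to descend the lifting problem from \(D\) to \(A\). The paper does this with the symmetric-function trick: set \(G=\mathrm{Stab}_{\S_n}(\ov{r(\x)})\), \(c_1(\X)=\prod_{\sigma\in G}\sigma r(\X)\) (so \(\ov{c_1(\x)}=\ov{r(\x)}\)), \(c_i(\X)=\sigma_i c_1(\X)\), and form \(P(T)=\prod_{i=1}^h\bigl(T-c_i(\X)\bigr)\). Its coefficients are \(\S_n\)-invariant, hence polynomials in the elementary symmetric functions by Lemma~\ref{fixSym}, so specializing \(S_i\mapsto(-1)^i a_{n-i}\) yields \(p(T)\in A[T]\) with \(\ov p(T)=T^h-T^{h-1}\). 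The simple residual root \(1\) lifts to \(\alpha\in A\) by Corollary~\ref{hensel-roots}, and the idempotents are then the Lagrange-type products \(e_i(\x)=p'(\alpha)^{-1}\prod_{j\neq i}(\alpha-c_j(\x))\). Without some version of this descent your proof does not get off the ground.

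The second half of your argument --- granting an idempotent lift \(e(\x)\) with \(\ov{e(\x)}=\ov{r(\x)}\), deduce that its orbit is a basic system of orthogonal idempotents --- is correct and in fact slicker than the paper's direct verification: \(\sigma_i(e)\sigma_j(e)\) for \(i\neq j\) is an idempotent lying in \(\m\cdot D\subseteq\J_D\) (Proposition~\ref{radical}~(1)), hence zero; the sum of the orbit is then an idempotent congruent to \(1\), hence equal to \(1\); and Lemma~\ref{idemnilpot} shows the stabilizers of \(e(\x)\) and \(\ov{r(\x)}\) coincide, so the orbit is exactly \([e(\x),\sigma_2(e(\x)),\ldots,\sigma_h(e(\x))]\). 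This would be a worthwhile complement to the paper's proof, but it cannot substitute for the construction of the lift itself.
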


\def\stab{\mathop{\mathrm{Stab}}}

\begin{proof} 
By Lemma \ref{booldisc} \(\B(D_{\k}(\ov f))\) is discrete. Let
\(\ov{r_1(\x)}=\ov{r(\x)},\allowbreak\ov{r_2(\x)},\allowbreak\dots,
\allowbreak\ov{r_h(\x)}\) be the orbit of \(\ov{r(\x)}\) under the
action of \(\S_n\); let \(\sigma_2,\dots,\sigma_h\in\S_n\) such that
\(\ov{r_i(\x)}=\sigma_i\ov{r(\x)}\).

Let \(G=\stab_{\S_n}\l(\ov{r(\x)}\r)=
\l\{\sigma\in\S_n\tq\sigma\ov{r(\x)}=\ov{r(\x)}\r\}\).


Let \(c_1(\X)=\prod_{\sigma\in G} \sigma r(\X)\). Then \(\ov{c_1(\x)}=
\l(\ov{r_1(\x)}\r)^{|G|}=\ov{r_1(\x)}\).

For \(i=2,\dots,h\), let \(c_i(\X)=\sigma_i c_1(\X)\). We have
\(\ov{c_i(\x)}=\ov{r_i(\x)}\).

Let \(P(T)=\prod_{i=1}^h (T-c_i(\X))\in A[\X][T]\). The coefficients of
\(P(T)\) are invariant under the action of \(\S_n\); so \(P(T)\in
A[S_1,\dots,S_n][T]\). We write \(P(T)=R(S_1,\dots,S_n,T)\).

So let \(p(T)=R(s_1,\dots,s_n,T)\), where 
\(s_{i}=S_{i}(\x)=(-1)^ia_{n-i}\).

We get \(p(T)=\prod_i (T-c_i(\x))\in A[T]\) 
(remember that \(f(T)=T^n+\cdots+a_1 T+a_0\)). Modulo \(\m\), we have
\(\ov p(T)=\l(T-\ov{r_1(\x)}\r)\cdots\l(T-\ov{r_h(\x)}\r)=T^h-T^{h-1}\in\k[T]\).

So \(\ov p(T)\in\k[T]\) admits \(1\in\k\) has a simple root. We can lift
it to a root \(\alpha\in A\) of \(p\), such that \(\ov\alpha=1\). We have
\(\ov{p'(\alpha)}=1\), so that \(p'(\alpha)\in A^\times\); let \(\lambda\in
A\) be its inverse (we have \(\ov\lambda=1\)).

Let \(e_i(\x)=\lambda \prod_{j\neq i}(\alpha-c_j(\x))\in D\).

We have, for \(i\neq k\), \(e_i(\x) e_k(\x)=0\), and \(\sum
e_i(\x)=\lambda p'(\alpha)=1\); hence \(e_i(\x)^2=e_i(\x)\). Moreover,
\(\ov{e_i(\x)}=\prod_{j\neq i}(1-\ov{r_j(\x)})=\ov{r_i(\x)}\).
\end{proof}

\begin{proposition}\label{idemlift2} \emph{(lifting an arbitrary idempotent of $D/\m D$)}

\noindent  Let \(r(\X)\in A[\X]\) be such 
that
\(\ov{r(\x)}\in D/\m D\) is an idempotent. Then there exists
\(e(\X)\in A[\X]\) such that \(e(\x)\) is an idempotent of \(D\) with
\(\ov{e(\x)}=\ov{r(\x)}\).
\end{proposition}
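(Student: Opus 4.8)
The plan is to reduce the general idempotent case to the Galois idempotent case handled in Proposition~\ref{idemgaloislift}, using the decomposition result Lemma~\ref{bool}. First I would note that $D\big/\m\cdot D$ is (isomorphic to) $\Dk(\ov f)$, and that $\k$ has only $0$ and $1$ as idempotents since it is a field; so Lemma~\ref{bool} applies to the ring $\Dk(\ov f)$. Hence, given that $\ov{r(\x)}$ is an idempotent of $D\big/\m\cdot D$, there is a Galois idempotent $\ov{h}\in D\big/\m\cdot D$ such that $\ov{r(\x)}$ is a sum of conjugates of $\ov h$, say $\ov{r(\x)}=\sum_{\sigma\in T}\sigma(\ov h)$ for some finite subset $T$ of $\S_n$ picking out distinct conjugates.

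Next I would lift the Galois idempotent. Since $\ov h$ lives in $D\big/\m\cdot D=\Dk(\ov f)$, write $\ov h=\ov{q(\x)}$ for some $q(\X)\in\k[\X]$, and pick any $s(\X)\in A[\X]$ reducing to $q(\X)$; then $\ov{s(\x)}=\ov h$ is a Galois idempotent of $D\big/\m\cdot D$. Applying Proposition~\ref{idemgaloislift} to $s$, I obtain $e_0(\X)\in A[\X]$ such that $e_0(\x)$ is a Galois idempotent of $D$ with $\ov{e_0(\x)}=\ov h$, and moreover the orbit of $e_0(\x)$ maps bijectively onto the orbit of $\ov h$, with $\sigma_i(e_0(\x))$ lifting $\sigma_i(\ov h)$ for the permutations $\sigma_i$ describing the orbit. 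Then I would set
\[
e(\x)=\sum_{\sigma\in T}\sigma(e_0(\x))\in D,
\]
choosing $T$ so that the $\sigma(\ov h)$, $\sigma\in T$, are exactly the distinct conjugates summing to $\ov{r(\x)}$; correspondingly $e(\X)=\sum_{\sigma\in T}\sigma(e_0(\X))\in A[\X]$.

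It remains to check that $e(\x)$ is an idempotent of $D$ and that $\ov{e(\x)}=\ov{r(\x)}$. The second is immediate: reducing mod $\m\cdot D$ commutes with the $\S_n$-action, so $\ov{e(\x)}=\sum_{\sigma\in T}\sigma(\ov h)=\ov{r(\x)}$. For the first, since $e_0(\x)$ is a Galois idempotent its orbit is a basic system of orthogonal idempotents, so any sum of distinct members of that orbit is again an idempotent; one must only be sure the $\sigma(e_0(\x))$ for $\sigma\in T$ are pairwise orthogonal, which follows because their reductions $\sigma(\ov h)$ are pairwise distinct members of the basic system of orthogonal idempotents that is the orbit of $\ov h$, hence the products $\sigma(e_0(\x))\tau(e_0(\x))$ for distinct $\sigma,\tau\in T$ reduce to $0$; being idempotents in the Jacobson radical of $D$ (note $\m\cdot D\subseteq\J_D$ by Proposition~\ref{radical}), they vanish by Lemma~\ref{idemnilpot}. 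Thus $e(\x)^2=\sum_{\sigma\in T}\sigma(e_0(\x))^2=\sum_{\sigma\in T}\sigma(e_0(\x))=e(\x)$.

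The main obstacle I anticipate is the bookkeeping of orbits: matching the distinct conjugates of $\ov h$ appearing in $\ov{r(\x)}$ with the corresponding conjugates of the lift $e_0(\x)$, and verifying that distinctness downstairs (which is what Lemma~\ref{bool} and the discreteness of $\B(\Dk(\ov f))$ give us) transfers to the orthogonality relations needed upstairs. This is exactly where the ``more precisely'' clause of Proposition~\ref{idemgaloislift} — that the orbit of the lift maps bijectively onto the orbit of the residual idempotent via the same permutations — is essential, and where one must be careful that no two of the chosen $\sigma(e_0(\x))$ accidentally coincide or fail to be orthogonal. Everything else is routine manipulation of orthogonal idempotents together with the fact that idempotents are isolated modulo $\J_D$.
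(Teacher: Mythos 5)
Your proposal is correct and follows exactly the paper's own (much terser) proof: apply Lemma \ref{bool} to $D\big/\m\cdot D$, lift the resulting Galois idempotent by Proposition \ref{idemgaloislift}, and take the corresponding sum of conjugates. Your extra verification of orthogonality upstairs (via Lemma \ref{idemnilpot} and $\m\cdot D\subseteq\J_D$) is a valid filling-in of the step the paper leaves implicit.
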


\begin{proof} Lemma \ref{bool} says that $\ov{r(\x)}$ is a sum of conjugates
of a Galois idempotent of $D/\m D$.
Proposition \ref{idemgaloislift} allows us to lift this Galois idempotent.
The corresponding sum of conjugates is an idempotent
which is a lifting of $\ov{r(\x)}$.
\end{proof}

\subsection{Fundamental theorems}\label{subsecFact}

\setcounter{context}{2}
\begin{context} \label{context5b}
In Sections \ref{subsecFact} 
and \ref{subsecFact2} \(A\) will 
be a nontrivial Henselian residually discrete local ring with maximal ideal \(\m\) and
residue field \(\k\).
\end{context}

\begin{proposition}\label{monicfactorlift} Let \(f,g_0,h_0\in A[T]\) be 
monic
polynomials, such that \(\ov f = \ov g_0 \ov h_0\) in \(\k[T]\) and
\(\gcd(\ov g_0,\ov h_0)=1\). Then there exist monic polynomials \(g,h\in
A[T]\) such that \(f=g h\) and \(\ov g=\ov g_0\), \(\ov h=\ov h_0\).
Moreover this factorization is unique.
\end{proposition}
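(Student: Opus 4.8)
The natural strategy is to reduce the factorization problem for $f$ to an idempotent of the universal decomposition algebra $D=\D_A(f)$, which we know how to lift by Proposition~\ref{idemlift2}. First I would observe that, since $A$ is residually discrete and $\gcd(\ov g_0,\ov h_0)=1$ in $\k[T]$, Proposition~\ref{factoridem} applied over the residue field $\k$ gives us an idempotent $\ov\varepsilon$ of $\k[X]\flr{\ov f}$ corresponding to the factor $\ov g_0$; in fact one can describe it explicitly using the B\'ezout relation $\ov u\,\ov g_0+\ov v\,\ov h_0=1$ as $\ov\varepsilon=\ov u\,\ov g_0$. The ring $B:=A[X]\flr{f}=A[x]$ is a finite $A$-algebra with $B\big/\m\cdot B\cong\k[X]\flr{\ov f}$, so $\ov\varepsilon$ is an idempotent of $B\big/\m\cdot B$.

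The second step is to lift $\ov\varepsilon$ to a genuine idempotent of $B$. Here I would use that $B$ is a quotient of $D=\D_A(f)$ — more precisely, $B$ embeds in $D$ (sending $x$ to $x_1$), or one argues directly that $D$ is faithfully flat over $B$ — together with Proposition~\ref{idemlift2}, which provides lifting of idempotents modulo $\m\cdot D$ in $D$. Pulling a lifted idempotent of $D$ back to $B$ via Lemma~\ref{idemlift} (with $C=B$, using that $B\supseteq A$ is a subalgebra of the finite algebra $D$ and $\m\subseteq\J_B$) yields an idempotent $e(x)\in B$ with $\ov{e(x)}=\ov\varepsilon$. Alternatively, and perhaps more cleanly, one can invoke Proposition~\ref{radical}(1): $B$ has the property of idempotents lifting iff one can lift idempotents modulo $\m\cdot B$, and the latter follows from the lifting in $D$. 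Once we have $e(x)\in B$ idempotent lifting $\ov\varepsilon$, Proposition~\ref{factoridem} applied over $A$ produces monic polynomials $g,h\in A[T]$ with $f=g\cdot h$ and $\la g(x)\ra=\la e(x)\ra$ in $B$, and by the compatibility of that bijection with reduction mod $\m$ we get $\ov g=\gcd(\ov e,\ov f)=\ov g_0$ (after normalizing so degrees match), hence $\ov h=\ov h_0$.

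For uniqueness, suppose $f=g\cdot h=g'\cdot h'$ are two such monic factorizations with the same residues. Then $\la g(x)\ra$ and $\la g'(x)\ra$ are both idempotent ideals of $B$ reducing to $\la\ov g_0(\ov x)\ra=\la\ov\varepsilon\ra$ modulo $\m\cdot B$; the corresponding idempotents $e,e'\in B$ satisfy $e-e'\in\m\cdot B\subseteq\J_B$, so $e=e'$ by Lemma~\ref{idemnilpot}. Since the map from idempotents to monic factors in Proposition~\ref{factoridem} is a bijection, $g=g'$ and $h=h'$.

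The main obstacle is the lifting step: one must be careful that Proposition~\ref{idemlift2} is stated for the universal decomposition algebra $D$, not for an arbitrary finite $A$-algebra such as $B$, so the proof must genuinely route the idempotent of $B\big/\m\cdot B$ through $D\big/\m\cdot D$ and then back down — this is exactly what Lemma~\ref{idemlift} was designed to handle, but checking its hypotheses ($e^2-e\in\m\cdot C$, $h-e\in\m\cdot B$, $h^2=h$) requires identifying the lifted idempotent of $D$ with one sitting over the image of an element of $B$. A secondary point to get right is the bookkeeping on degrees and leading coefficients so that the factors extracted from the idempotent are genuinely monic of the prescribed degrees $\deg\ov g_0$ and $\deg\ov h_0$; this is routine given the explicit description in the proof of Proposition~\ref{factoridem}.
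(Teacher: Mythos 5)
Your proposal is correct and follows essentially the same route as the paper: translate the residual coprime factorization into an idempotent of \(B\big/\m\cdot B\) via Proposition~\ref{factoridem}, lift it inside the universal decomposition algebra \(D=\D(f)\supseteq B\) using Proposition~\ref{idemlift2}, descend to \(B\) with Lemma~\ref{idemlift}, and deduce uniqueness from the bijection of Proposition~\ref{factoridem} together with Lemma~\ref{idemnilpot} and Proposition~\ref{radical}~(1). The points you flag as delicate (routing through \(D\) and back, and the degree bookkeeping) are exactly the ones the paper's argument handles the same way.
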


\begin{proof} Let \(B=A[t]=A[T]\flr{f(T)}\). From Proposition 
\ref{factoridem}
we see that it is enough to show that given an idempotent \(\ov e(t)\in
B/\m B\), one can lift it to an idempotent \(e'(t)\in B\).

Let \(D=\D(f)\). It is an extension ring of \(B\). The situation is the
following:

\def\darrow{\vtop to 10pt{%
\vss%
\hbox{\(\left\downarrow\vrule width 0pt depth 6pt height 0pt\right.\)}%
\kern6pt\vss}}
\[\begin{CD}
    A @>>> B               @>>> D \\
 @VVV      @VVV                 @VVV\\
   \k @>>> B/\m B @>>> D/\m D 
\end{CD}\]

Thanks to Proposition \ref{idemlift2} there exists an idempotent \(e'\in
D\) such that \(e-e'\in\m D\). Then Lemma \ref{idemlift} shows that \(e'\in
B\), and we are done.

The unicity comes from Proposition \ref{factoridem},
Lemma \ref{idemnilpot} and Proposition \ref{radical}~(1). 
\end{proof}

We can now lift idempotents in all finite \(A\)-algebras.
With Theorem \ref{localImpliqueHenselian} 
this is the main result of the paper.

\begin{theorem}[context \ref{context5b}]\label{idemlift3} Any finite
\(A\)-algebra \(B\supseteq A\) 
has the property of idempotents lifting. 
More precisely for any \(e\in B\) such that \(e^2-e\in \m B\)
we can construct an idempotent \(e'\in B\) such that \(e-e'\in \m B\).
\end{theorem}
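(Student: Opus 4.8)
The plan is to reduce the case of an arbitrary finite $A$-algebra $B$ to the case of a quotient of a polynomial ring, which is already handled by Proposition \ref{monicfactorlift} (via Proposition \ref{factoridem}). The key observation is that, although $B$ need not be generated by a single element, we can use the universal decomposition algebra machinery developed in Section 4 together with the relative version of idempotent lifting provided by Lemma \ref{idemlift}.

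First I would argue that it suffices to treat the case where $B$ is a monogenic extension $A[x]=A[X]/\langle f(X)\rangle$ with $f$ monic: indeed $B$ is generated as an $A$-module by finitely many elements $b_1,\ldots,b_m$; each $b_i$ is integral over $A$ (Corollary \ref{inverse}(1)), say a root of a monic polynomial $f_i$. One cannot immediately conclude because $B$ is not a priori a single $A[X]/\langle f\rangle$. So instead, given $e\in B$ with $e^2-e\in\m\cdot B$, I would consider the finite $A$-subalgebra $C=A[e]\subseteq B$. Since $e$ is integral over $A$ there is a monic $f\in A[X]$ with $f(e)=0$, hence a surjection $A[X]/\langle f\rangle\twoheadrightarrow C$; but lifting through a quotient is not automatic either. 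The cleanest route: let $C=A[e]$, let $\widetilde f\in A[X]$ be monic with $\widetilde f(e)=0$, and set $\widetilde C=A[X]/\langle\widetilde f\rangle=A[y]$ with a surjection $\pi:\widetilde C\to C$, $y\mapsto e$. Then $\widetilde C$ is a finite monogenic $A$-algebra, so $\widetilde C\big/\m\widetilde C$ is a finite $\k$-algebra and idempotents lift modulo $\m\widetilde C$ by Proposition \ref{monicfactorlift} (through Proposition \ref{factoridem}). The element $y\in\widetilde C$ satisfies $y^2-y\in\pi^{-1}(\m\cdot C)$, which contains $\m\widetilde C$ but a priori more; one must be a little careful, but since $\m\subseteq\J_{\widetilde C}$ (Proposition \ref{radical}, as $\widetilde C$ is finite over the local ring $A$) and $\pi$ is a quotient map, we actually have that the image of any $\m\widetilde C$-lift of $y^2\mapsto y$ works after passing to $C$.

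More directly, here is the approach I would commit to. Pass to $\widetilde C=A[X]/\langle\widetilde f\rangle$ where $\widetilde f$ is the characteristic polynomial of multiplication by $e$ on $B$ (so $\widetilde f(e)=0$ by Cayley--Hamilton, Lemma \ref{charpol}); reducing mod $\m$, $\ov{\widetilde f}$ has $\ov e$ among its roots and, since $\ov e$ is idempotent in $B\big/\m B$, one sees $\ov{\widetilde f}(X)$ is divisible by a nontrivial coprime factorization reflecting the splitting $X\mid$ vs $X-1\mid$. Apply Proposition \ref{monicfactorlift} to lift this coprime factorization $\ov{\widetilde f}=\ov g_0\ov h_0$ (with $\ov g_0,\ov h_0$ the pieces supported at $0$ and $1$) to $\widetilde f=gh$ over $A$; by Proposition \ref{factoridem} this corresponds to an idempotent $\varepsilon\in\widetilde C$ with $\ov\varepsilon$ the idempotent picking out the "$1$-part", i.e.\ $\ov\varepsilon=\ov y$ in $\widetilde C\big/\m\widetilde C$. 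Now map this idempotent forward: set $e'=\pi(\varepsilon)\in C\subseteq B$. It is an idempotent, and $\ov{e'}$ and $\ov e$ agree in $C\big/\m C$, hence in $B\big/\m B$; equivalently $e-e'\in\m\cdot B$.

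The main obstacle I anticipate is the bookkeeping around the residue-level splitting of $\ov{\widetilde f}$: I need that $\ov e$ being idempotent in the finite $\k$-algebra $B\big/\m B$ forces the minimal/characteristic polynomial of $\ov e$ there to factor as a product of a power of $X$ and a power of $X-1$ (this is exactly Corollary \ref{structthm1}(3) applied to the element $\ov e$ of the finite $\k$-algebra $B\big/\m B$: there is an idempotent $\epsilon\in\k[\ov e]$ with $\ov e$ nilpotent on one piece and invertible on the other --- but since $\ov e$ is already idempotent, $\ov e=\epsilon$, and the characteristic polynomial splits cleanly), and then transport this to a coprime factorization of $\ov{\widetilde f}$ over $\k$ with the $0$- and $1$-parts genuinely coprime. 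Once that is in hand, Propositions \ref{monicfactorlift} and \ref{factoridem} and the functoriality of $\pi$ finish the argument. Alternatively, and perhaps more cleanly in the write-up, one invokes Lemma \ref{idemlift} directly with $C=A[e']$-type subalgebras to avoid ever mentioning $\pi$; I expect the authors to phrase it via embedding into a universal decomposition algebra $D=D_A(\widetilde f)$, lifting a Galois idempotent there (Propositions \ref{idemgaloislift}, \ref{idemlift2}), then descending to $B$ using Lemma \ref{idemlift}, exactly as in the proof of Proposition \ref{monicfactorlift} but with $B$ in the role of the intermediate ring $C$.
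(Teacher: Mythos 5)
Your committed route is in essence the paper's: find a monic $F\in A[T]$ with $F(e)=0$ (finiteness plus Cayley--Hamilton), factor $\ov F=T^\ell\cdot(T-1)^k\cdot H$ with $H$ coprime to $T\cdot(T-1)$, lift this coprime factorization by Proposition \ref{monicfactorlift}, and extract from the lifted factors an idempotent congruent to $e$. Two steps in your write-up are not correct as stated, though. First, the claim that the idempotent $\varepsilon$ of $\widetilde C=A[T]\flr{F}$ satisfies $\ov\varepsilon=\ov y$ in $\widetilde C\big/\m\widetilde C=\k[T]\flr{\ov F}$ is false in general: the class of $T$ there is an idempotent only if $\ov F$ divides $T^2-T$, so there is no idempotent ``$\ov y$'' to lift inside $\widetilde C$. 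Relatedly, $\ov F$ need not split as $T^a(T-1)^b$: the reduction of the characteristic polynomial of multiplication by $e$ on $B$ is the characteristic polynomial of a \emph{lift} of $\mu_{\ov e}$ to $\k^n$, not of $\mu_{\ov e}$ itself, so a nontrivial factor $H$ coprime to $T(T-1)$ genuinely occurs and must be carried along (this is why the paper keeps the third factor $h$ and discards it only after checking $h(e)\in B^\times$).

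What is actually needed, and what is true, is that the image of $\varepsilon$ under the evaluation $A[T]\flr{F}\to B$, $y\mapsto e$, is congruent to $e$ (or to $1-e$) modulo $\m\cdot B$; verifying this requires a short computation carried out in $B\big/\m\cdot B$ (where $\ov e$ \emph{is} idempotent), not in $\widetilde C\big/\m\widetilde C$ or $C\big/\m\cdot C$. The paper performs exactly this computation directly, bypassing Proposition \ref{factoridem}: writing $F=a\cdot b\cdot h$ with $\ov a=T^\ell$, $\ov b=(T-1)^k$, $\ov h=H$, it notes that $h(e)$ is invertible, hence $a(e)\cdot b(e)=0$, that $a(e)+b(e)=\mu\in 1+\m\cdot B$, and that $\nu\cdot a(e)$ and $\nu\cdot b(e)$ (with $\nu=\mu^{-1}$) are complementary idempotents with $\nu\cdot a(e)\equiv e$. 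Finally, your closing ``alternative'' --- embedding $B$ into $\D(\widetilde f)$ and applying Lemma \ref{idemlift} with $B$ as the intermediate ring --- does not work as stated: $B$ is not a subalgebra of the universal decomposition algebra of $\widetilde f$, so the inclusion hypothesis of Lemma \ref{idemlift} fails; that lemma is the right tool for Proposition \ref{monicfactorlift} (where the intermediate ring is $A[T]\flr{f}\subseteq\D(f)$), but not here.
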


\noindent 
\emph{Remark}. Since \(B\) is semi-local (Proposition \ref{radical}~(2)), it is decomposable. So we get in classical mathematics the theorem that any finite
\(A\)-algebra is decomposed (\cite{LafonMarot}).  
\begin{proof} We denote by \(b\in B\mapsto \ov b\in
    B/\m B\) the quotient
map.  Using finiteness of \(B\), we find a monic polynomial \(F(T)\in
A[T]\) such that \(F(e)=0\). Its image modulo \(\m A[T]\) is \(\ov
F(T)\in\k[T]\).

Now \(\ov F(\ov e)=\ov 0\) in \(B/\m B\) which is a finite \(\k\)-algebra,
and \(\ov e^2=\ov e\).

We write \(\ov F(T)= T^\ell (T-1)^k   H(T)\) with
\(k,\ell\geq 0\) and \(H\) prime with
\(T\) and \(T-1\). 
If $\ell=0$, $\ov e$ is invertible, so $\ov e= \ov 1$ is lifted as $1$.
Similarly, if $k=0$, $\ov e= \ov 0$ is lifted as $0$.
If $\ell>0,\,k>0$, using Proposition \ref{monicfactorlift}, we lift the
factorization to \(F=a b h\), with 
\(\ov a=T^\ell\), \(\ov b=(T-1)^k\), \(\ov h=H\). 
Since $T(T-1)U+HV=1$ and \(\ov e^2-\ov e=\ov 0\), $\ov{h(e)}=H(\ov e)$
is invertible, so $h(e)$ is invertible, and $F(e)=0$ implies
$a(e)b(e)=0$. Moreover $\ov{a(e)}=\ov{e}^\ell=\ov e $ 
and $\ov{b(e)}=\ov{(1-e)}^k=\ov 1- \ov e$ (since $k,\,\ell>0$).
So $a(e)+b(e)=\mu\in 1+\m B$, which has an inverse $\nu\in 1+\m B$.
Finally $\nu a(e)$ and  $\nu b(e)$ are complementary idempotents
with $\ov{\nu a(e)}= \ov \nu \ov{a(e)}=\ov e$.
\end{proof}

We have also an easy converse result (\eg using Proposition 
\ref{factoridem},
but a direct proof is simpler). 

\begin{proposition} \label{Propreciproque}
    Let \(B\) be a nontrivial residually discrete local ring
such that every finite \(B\)-algebra has the property of idempotents
lifting. Then \(B\) is Henselian.
\end{proposition}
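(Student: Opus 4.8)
The plan is to prove the contrapositive direction of Definition~\ref{defHensel}: given a monic polynomial $f(X)=X^n+\cdots+a_1X+a_0$ with $a_1\in B^\times$ and $a_0\in\m_B$, we must produce a root of $f$ in $\m_B$. The natural move is to look at the residue polynomial $\ov f\in\k[X]$. Since $\ov{a_1}\neq 0$ and $\ov{a_0}=0$, we have $\ov f(0)=0$ and $\ov f{}'(0)=\ov{a_1}\neq 0$, so $0$ is a \emph{simple} root of $\ov f$ in $\k$; equivalently $X$ divides $\ov f$ exactly once, and we may write $\ov f = X\cdot\ov h$ with $\ov h(0)\neq 0$, so that $\gcd(X,\ov h)=1$ in $\k[X]$ (here we use that $B$ is residually discrete, so $\k$ is a discrete field and such gcd computations and factor extractions make sense).

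Now I would like to lift this coprime factorization $\ov f = X\cdot \ov h$ to a factorization $f = (X-\beta)\cdot h$ over $B$ with $\ov\beta=0$, i.e. $\beta\in\m_B$, which would give $\beta$ as the desired root. The obstruction is that Proposition~\ref{monicfactorlift} is stated for a \emph{Henselian} local ring $A$, and $B$ is exactly the ring whose Henselianity we are trying to establish — so we cannot invoke it directly. Instead I would route through the hypothesis: form the finite $B$-algebra $C=B[X]\flr{f(X)}=B[\xi]$, which is free of rank $n$ over $B$, hence a finite $B$-algebra containing $B$ (it is nontrivial since $f$ is monic of degree $n\ge 1$). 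By Proposition~\ref{factoridem} (applied with $A:=B$, which is legitimate since that proposition only needs $B$ to be a nontrivial residually discrete local ring, not Henselian — indeed Lemma~\ref{resultanttrick} needs only the residue field), the coprime factorization $\ov f=\ov g\cdot\ov h$ with $\ov g=X$ corresponds to an idempotent $\ov e\in C\big/\m_B\cdot C$. By hypothesis every finite $B$-algebra has the idempotent-lifting property, so $\ov e$ lifts to an idempotent $e\in C$; then Proposition~\ref{factoridem} turns $e$ back into a genuine factorization $f=g\cdot h$ over $B$ with monic $g,h$ and $\ov g=X$. Since $g$ is monic of degree $1$ (because $\deg g=\deg\ov g=1$) we have $g=X-\beta$ for some $\beta\in B$, and $\ov g=X$ forces $\ov\beta=0$, i.e. $\beta\in\m_B$. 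Plugging into $f=g\cdot h$ gives $f(\beta)=0$, so $\beta$ is a root of $f$ lying in $\m_B$, as required.

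The main thing to be careful about is precisely the applicability of the machinery of Section~\ref{FAI} to the base ring $B$: the \textbf{Context} paragraph of that subsection assumes $A$ is a nontrivial residually discrete local ring (not Henselian), so Propositions~\ref{radical} and~\ref{factoridem} and Lemma~\ref{resultanttrick} are all available verbatim with $A$ replaced by $B$. One should also check the degenerate case $n=1$: then $f=X+a_0$ with $a_0\in\m_B$, whose root $-a_0\in\m_B$ is exhibited directly, and the general argument above also goes through since $\ov h$ is then a nonzero constant and the factorization is trivial. Finally, uniqueness of the root is not asked for here, so nothing further is needed; the construction above is explicit once the idempotent-lifting algorithm for $C$ is supplied by the hypothesis.
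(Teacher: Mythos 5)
Your proof is correct and follows exactly the route the paper itself suggests: the paper states this proposition without proof, remarking only that it can be obtained ``e.g.\ using Proposition~\ref{factoridem}, but a direct proof is simpler,'' and your argument is a careful realization of that first suggestion (residual coprime factorization $\ov f=X\cdot\ov h$, pass to the idempotent of $B[X]\flr{f}$ modulo $\m$, lift it by hypothesis together with Proposition~\ref{radical}~(1), and convert back to a degree-one monic factor $X-\beta$ with $\beta\in\m$). The applicability of Proposition~\ref{factoridem} over the non-Henselian base $B$ is indeed the point to check, and you address it correctly.
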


We get now the basic ingredient for the construction of the 
strict Henselization
of a residually discrete local ring.
\begin{theorem}[context \ref{context5b}]\label{localImpliqueHenselian}
Every nontrivial finite local \(A\)-algebra \(B\)   
is a Henselian residually discrete local ring.
\end{theorem}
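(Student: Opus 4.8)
The plan is to deduce this from the two main results already established: Theorem~\ref{idemlift3}, which says every finite $A$-algebra containing $A$ has the property of idempotents lifting, and Proposition~\ref{radical}, which tells us that a finite algebra over $A$ is local iff its quotient by $\m\cdot B$ is local, and that a local finite algebra over $A$ is residually discrete. First I would reduce to the case $B\supseteq A$: by the remark at the beginning of Section~\ref{JRFA}, if $\varphi\colon A\to B$ is not injective we may replace $A$ by $A_1=\varphi(A)$, which is again a nontrivial residually discrete local ring (with residue field $\k$), $B$ is a finite $A_1$-algebra containing $A_1$, and moreover $A_1$ is Henselian since it is a nontrivial quotient of the Henselian ring $A$ (using the observation in the Context that nontrivial quotients of Henselian local rings are Henselian). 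So we may assume $A\subseteq B$.

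Next I would establish that $B$ is residually discrete and has residue field a finite extension of $\k$. Since $B$ is a finite local $A$-algebra, Proposition~\ref{radical}~(4) gives that $B$ is residually discrete; write $\gl=B/\J_B$ for its residue field. By Proposition~\ref{radical}~(1), $\J_B=\sqrt{\m\cdot B}$, so $\gl=B/\J_B$ is a quotient of the finite $\k$-algebra $B/\m\cdot B$, hence a finite $\k$-algebra which, being a field, is a finite field extension of $\k$.

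The core of the argument is to verify Definition~\ref{defHensel} for $B$: given a monic $f(X)=X^n+\cdots+b_1 X+b_0\in B[X]$ with $b_1\in B^\times$ and $b_0\in\m_B$, I must produce a root of $f$ in $\m_B$. By Corollary~\ref{hensel-roots} (or Proposition~\ref{hensel-nonmonic} after the translation $X\mapsto X$) it suffices to exhibit a simple root of $\ov f\in\gl[X]$ in $\gl$, together with the information needed to lift it; concretely, $\ov f(X)=X^n+\cdots+\ov{b_1}X$ has $0\in\gl$ as a root, and it is simple because $\ov{b_1}=\ov f{}'(0)\neq 0$ in $\gl$ as $b_1\in B^\times$. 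Now apply the Henselian property machinery to $B$ directly: once we know that a residual simple root of a monic polynomial over $B$ lifts, we are done — but this is exactly the statement of Proposition~\ref{hensel-nonmonic}/Corollary~\ref{hensel-roots}, whose proofs use only that $B$ is Henselian, which is what we are trying to prove. So instead I would invoke Proposition~\ref{Propreciproque}: it is enough to show that \emph{every finite $B$-algebra has the property of idempotents lifting}. But a finite $B$-algebra is in particular a finite $A$-algebra (composition of finite morphisms is finite), so this follows immediately from Theorem~\ref{idemlift3}.

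\textbf{Main obstacle.} The only delicate point is the reduction and the recognition that Theorem~\ref{idemlift3} applies to finite $B$-algebras: one must note that finiteness is transitive and that the hypotheses of Theorem~\ref{idemlift3} (namely that the base ring is a nontrivial residually discrete Henselian local ring) are not yet known for $B$ — but Proposition~\ref{Propreciproque} is stated precisely so as to require only the idempotent-lifting property of finite $B$-algebras, not Henselianity of $B$ as an input, so the circularity is avoided. Everything else is a routine combination of Proposition~\ref{radical} and the transitivity of finiteness; no computation is needed.
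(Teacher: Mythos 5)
Your proposal is correct and follows essentially the same route as the paper: residual discreteness from Proposition~\ref{radical}~(4), then the observation that any finite $B$-algebra is a finite $A$-algebra, so Theorem~\ref{idemlift3} gives idempotent lifting and Proposition~\ref{Propreciproque} yields Henselianity. The only difference is presentational — you spell out the reduction to $B\supseteq A$ and explicitly flag the avoided circularity, both of which the paper handles by appeal to its standing Context.
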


\begin{proof} 
By Proposition \ref{radical}~(4) \(B\) is residually discrete.  Let
\(C\) be a finite \(B\) algebra; it is a finite
\(A\)-algebra as well. So by Theorem \ref{idemlift3} 
it admits lifting of idempotents modulo
\(\m C=(\m B) C\). Hence
by Proposition \ref{Propreciproque} \(B\) is Henselian.
\end{proof}

The following corollary gives some precision in a particular case.

\begin{coro}\label{extension1}
Let \(f(X)\in A[X]\) be a monic polynomial such that \(\ov
f(X)\in\k[X]\) is  (a power of) an irreducible $h(X)\in\k[X]$.  Let \(B\)
be the quotient algebra \(B=A[x]=A[X]\flr{f(X)}\);
\(B\) is a local Henselian ring with residue field \(\k[X]\flr{h(X)}\).
\end{coro}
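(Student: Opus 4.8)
The statement to prove is Corollary~\ref{extension1}: for a monic $f\in A[X]$ with $\ov f$ equal to a power of an irreducible $h\in\k[X]$, the algebra $B=A[x]=A[X]\flr{f(X)}$ is a local Henselian ring with residue field $\k[X]\flr{h(X)}$. The plan is to reduce everything to the two fundamental theorems just proved, Proposition~\ref{radical} and Theorem~\ref{localImpliqueHenselian}. First I would compute $B\big/\m\cdot B\cong \k[X]\flr{\ov f(X)}$, which is a finite $\k$-algebra. Since $\ov f = h^e$ for some $e\geq 1$ with $h$ irreducible in $\k[X]$, the ring $\k[X]\flr{h^e}$ is local: its only idempotents are $0$ and $1$ (equivalently, every element is invertible or nilpotent, the nilpotents being exactly the multiples of $\ov h$). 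This uses Corollary~\ref{structthm1}~(4) applied to the finite $\k$-algebra $\k[X]\flr{h^e}$, together with the fact that $\k[X]$ is a discrete PID so its zero-dimensional quotients by powers of an irreducible are local.

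Next, having shown $B\big/\m\cdot B$ is local, Proposition~\ref{radical}~(3) immediately gives that $B$ itself is local, and then Proposition~\ref{radical}~(4) gives that $B$ is residually discrete. The residue field of $B$ is $(B\big/\m\cdot B)\big/\J_{B\big/\m\cdot B}$; since $\J_{B\big/\m\cdot B}=\sqrt{\la 0\ra}$ by Corollary~\ref{structthm1}~(2), and in $\k[X]\flr{h^e}$ the nilradical is $\la \ov h\ra$, we get residue field $\k[X]\flr{h^e}\big/\la \ov h\ra\cong \k[X]\flr{h(X)}$, as claimed. Finally, $B$ is a nontrivial finite local $A$-algebra, so Theorem~\ref{localImpliqueHenselian} applies directly and tells us $B$ is Henselian (and residually discrete, consistent with the above).

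The only genuine content beyond citing these results is the purely residual computation: that $\k[X]\flr{h^e}$ is a local ring with residue field $\k[X]\flr{h}$ when $h$ is irreducible over the discrete field $\k$. I expect this to be the main (minor) obstacle, and it is handled constructively since $\k$ is discrete: one can perform division with remainder by $h$, so membership in $\la \ov h\ra$ is decidable, an element is invertible iff its remainder mod $h$ is nonzero (as $h$ is irreducible and $\k[X]\flr{h}$ is then a discrete field by Lemma~\ref{resultanttrick}-type coprimality / Bézout), and nilpotent otherwise; the quotient map to $\k[X]\flr{h}$ kills exactly the nilpotents. Everything else is a short invocation of Proposition~\ref{radical} and Theorem~\ref{localImpliqueHenselian}, so the corollary follows in a few lines.
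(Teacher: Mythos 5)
Your proposal is correct and follows essentially the same route as the paper: the paper's own proof is just ``By Proposition \ref{radical}~(3) $B$ is local, so apply Theorem \ref{localImpliqueHenselian}.'' Your additional material --- the explicit verification that $\k[X]\flr{h^e}$ is local with residue field $\k[X]\flr{h}$, using discreteness of $\k$ and Corollary~\ref{structthm1} --- merely fills in details the paper leaves implicit, and does so correctly.
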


\begin{proof} By Proposition \ref{radical} (3) \(B\) is local,
    so apply Theorem \ref{localImpliqueHenselian}.
\end{proof}

Here we get, within precise constructive hypotheses, the
analogue of the characterization of Henselian rings in \cite{LafonMarot}
as local rings satisfying ``every finite algebra is
a finite product of local rings''.

\begin{proposition}\label{extension2}  
Let \(B\) be a finite algebra over \(A\).  Assume that the Boolean algebra \(\B(B/\m B)\) is finite (a priori, we only know it is bounded).  Then
\(B\) is a finite product of local Henselian rings.
\end{proposition}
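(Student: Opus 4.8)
The plan is to reduce the statement to the already-established lifting of idempotents together with the structural results on decomposable rings. First I would set $C = B/\m\cdot B$, a finite $\k$-algebra over the discrete field $\k$. By Proposition \ref{radical}~(1) we have $\J_B = \sqrt{\m\cdot B}$, so the canonical map $\B(B)\to\B(C)$ has the property that its composite with $\B(C)\to\B(C/\sqrt{\la 0\ra})$ is the map $\B(B)\to\B(B/\J_B)$; by Lemmas \ref{idemnilpot2} and Theorem \ref{idemlift3} this map is bijective. So $\B(B)\cong\B(C)$, and since $\B(C)$ is assumed finite, $\B(B)$ is finite.

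Next I would invoke the structure theorem: by Corollary \ref{structthm1}~(6), since $\B(C)$ is finite, $C$ is a product of finitely many finite local $\k$-algebras, say $C\cong C_1\times\cdots\times C_r$; equivalently there is a basic system of orthogonal idempotents $\ov{e}_1,\ldots,\ov{e}_r$ of $C$ with each $\ov{e}_i$ indecomposable. Using Theorem \ref{idemlift3} I would lift each $\ov{e}_i$ to an idempotent $e_i\in B$; because the lifting map $\B(B)\to\B(C)$ is a bijective morphism of boolean algebras, the $e_i$ again form a basic system of orthogonal idempotents, each indecomposable in $\B(B)$. Hence $B\cong B_1\times\cdots\times B_r$ with $B_i = B[1/e_i]$, and each $B_i$ is a finite $A$-algebra whose boolean algebra of idempotents is $\F_2$ (since $e_i$ is indecomposable), i.e. $B_i$ has no nontrivial idempotents. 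By Proposition \ref{radical}~(3) applied to $B_i$ — its reduction mod $\m$ is $C_i$, which is local, being a finite local $\k$-algebra — each $B_i$ is a local ring.

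Finally, each $B_i$ is a \emph{nontrivial} finite local $A$-algebra, so by Theorem \ref{localImpliqueHenselian} it is a Henselian residually discrete local ring. This exhibits $B$ as the finite product $B_1\times\cdots\times B_r$ of local Henselian rings, as required. The main obstacle I anticipate is purely bookkeeping: making sure the passage from the finite orthogonal idempotent decomposition of $C$ to one of $B$ is constructively valid — in particular that lifting preserves orthogonality and indecomposability — but this is exactly guaranteed by the bijectivity in Theorem \ref{idemlift3} combined with Lemma \ref{idemnilpot} (idempotents differing by an element of $\J_B$ are equal), so no real difficulty remains. One should also note that residual discreteness of each $B_i$ is automatic from Proposition \ref{radical}~(4) once locality is known, which is subsumed in the appeal to Theorem \ref{localImpliqueHenselian}.
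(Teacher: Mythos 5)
Your proposal is correct and follows essentially the same route as the paper: decompose \(B/\m\cdot B\) via Corollary \ref{structthm1}~(6), lift the resulting basic system of orthogonal idempotents by Theorem \ref{idemlift3}, and conclude with Proposition \ref{radical}~(3) and Theorem \ref{localImpliqueHenselian}. The extra bookkeeping you supply (injectivity from Lemma \ref{idemnilpot}, preservation of orthogonality and indecomposability under the bijection \(\B(B)\cong\B(B/\m\cdot B)\)) is exactly what the paper leaves implicit.
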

\begin{proof} By Corollary \ref{structthm1} \(B/ \m B\) is 
a finite product of finite local \(\k\)-algebras.  We lift the
idempotents by Theorem \ref{idemlift3} and we conclude by Proposition
\ref{radical}~(3) and Theorem~\ref{localImpliqueHenselian}.
\end{proof}

\subsection{Factorization  of non-monic polynomials}\label{subsecFact2}

~

\ss Now we turn to the case of non-monic polynomials. 
We want to lift a residual factorization in two coprime polynomials
when one residual factor is monic.
Since the polynomial we hope to factorize is not monic we cannot apply 
Proposition~\ref{monicfactorlift}.
%
\begin{lemma}\label{IfInfinite} Let \(f,g_0,h_0\in A[X]\) such that
\(\ov f=\ov g_0 \ov h_0\) with \(\gcd(\ov g_0,\ov h_0)=1\)
and \(g_0\) is monic.
If  \(f(0)\in A^\times\), then there exist \(g,h\in A[X]\) with
\(g\) monic, such that \(f=g h\), \(\ov g=\ov g_0\) and \(\ov
h=\ov h_0\). Moreover this factorization lifting is unique.
\end{lemma}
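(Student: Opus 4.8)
The idea is to reduce the non-monic factorization problem to the monic case already settled in Proposition~\ref{monicfactorlift} by a reciprocal-polynomial trick, exploiting that $f(0)\in A^\times$. Write $n=\deg f$ and let $f^{\ast}(X)=X^n\cdot f(1/X)$ be the reciprocal polynomial; since the leading coefficient of $f$ need not be a unit, $f^{\ast}$ need not be monic either, but its constant term is the leading coefficient of $f$. That is not yet enough, so instead I would proceed as follows: because $f(0)\in A^{\times}$, we may divide through and assume $f(0)=1$ without loss of generality (this only rescales $h$), and then $f^{\ast}(X)=X^n f(1/X)$ has constant term $1$ and leading coefficient $f(0)=1$, hence $f^{\ast}$ \emph{is} monic of degree $n$. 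Reducing mod $\m$, $\ov{f^{\ast}}=X^n\ov f(1/X)$; the hypothesis $\ov f=\ov{g_0}\cdot\ov{h_0}$ with $g_0$ monic and $\gcd(\ov{g_0},\ov{h_0})=1$ transports to a factorization $\ov{f^{\ast}}=\ov{g_0^{\ast}}\cdot\ov{h_0^{\ast}}$ of the monic polynomial $\ov{f^{\ast}}$ into coprime factors, where one must check that taking reciprocals preserves coprimality (it does, because $X$ is coprime to both $\ov{g_0}$ and $\ov{h_0}$ — here $g_0(0)$ and $h_0(0)$ are units since $f(0)$ is, so $0$ is not a root of either reduced factor, and reciprocal-taking of a polynomial with nonzero constant term is an involution up to units that permutes the roots by $x\mapsto 1/x$).

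\textbf{Key steps.} First, normalize so that $f(0)=1$. Second, pass to the reciprocal polynomial $f^{\ast}$, which is now monic of degree $n$, and record that $g_0^{\ast}$, $h_0^{\ast}$ are (after normalizing their constant terms, which are units mod $\m$) monic polynomials with $\ov{f^{\ast}}=\ov{g_0^{\ast}}\,\ov{h_0^{\ast}}$ and $\gcd(\ov{g_0^{\ast}},\ov{h_0^{\ast}})=1$. Third, apply Proposition~\ref{monicfactorlift} to obtain a \emph{unique} monic factorization $f^{\ast}=G\cdot H$ with $\ov G=\ov{g_0^{\ast}}$, $\ov H=\ov{h_0^{\ast}}$. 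Fourth, take reciprocals back: set $g(X)=X^{\deg G}G(1/X)$ and $h(X)=X^{\deg H}H(1/X)$; then $f(X)=X^n f^{\ast}(1/X)=g(X)\cdot h(X)$ up to a unit constant that is absorbed (using $f(0)=1$ and $G(0)H(0)=f^{\ast}(0)=1$, both $G(0),H(0)$ are units, so $g$ has leading coefficient $G(0)$ — to make $g$ monic one rescales, $g:=G(0)^{-1}\cdot X^{\deg G}G(1/X)$, and compensates in $h$). One checks $g$ is monic of degree $\deg G=\deg g_0$, that $\ov g=\ov{g_0}$ and $\ov h=\ov{h_0}$ by reducing the construction mod $\m$. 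Finally, uniqueness of the lifting follows from the uniqueness clause in Proposition~\ref{monicfactorlift}: any two liftings $f=g\cdot h=g'\cdot h'$ with the prescribed residues produce, via the reciprocal correspondence, two monic factorizations of $f^{\ast}$ with the same residues, forcing $G=G'$, $H=H'$, hence $g=g'$, $h=h'$.

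\textbf{Main obstacle.} The delicate point is bookkeeping the units and degrees through the reciprocal correspondence: the reciprocal of a polynomial is only well-behaved when the relevant constant term is a unit, and here one needs $g_0(0)$, $h_0(0)$ (equivalently $G(0)$, $H(0)$) to be units, which is exactly what $f(0)\in A^{\times}$ guarantees modulo $\m$ — one must verify these are units in $A$, not just mod $\m$, which follows since $A$ is local and they are units mod $\m$. A secondary subtlety is that $\deg G$ could in principle drop below $\deg g_0^{\ast}$ if the leading coefficient of $g_0^{\ast}$ were not forced to be a unit; but $g_0^{\ast}$ is arranged monic, and $\gcd(\ov{g_0^{\ast}},\ov{h_0^{\ast}})=1$ together with $\deg\ov{g_0^{\ast}}=\deg\ov{g_0}$ (since $g_0(0)$ is a unit mod $\m$) pins the degrees down. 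Once the degree and unit bookkeeping is in place, everything else is a mechanical translation of Proposition~\ref{monicfactorlift} and its uniqueness through the involution $X\mapsto 1/X$.
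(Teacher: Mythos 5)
Your proof follows essentially the same route as the paper's: the paper also reduces to Proposition~\ref{monicfactorlift} by reversing coefficients, setting \(p(X)=f(0)^{-1}\cdot X^d\cdot f(1/X)\) (monic exactly because \(f(0)\in A^\times\)), lifting the corresponding residual factorization of \(p\), and reversing back. One concrete slip in your unit bookkeeping, which you yourself identify as the delicate point: after normalizing \(f(0)=1\), the \emph{constant term} of \(f^{\ast}\) is the \emph{leading} coefficient of \(f\), not \(f(0)\); so the identity \(G(0)H(0)=f^{\ast}(0)=1\) is false, and \(H(0)\) need not be a unit --- indeed, in the interesting case where \(f\) is not residually monic, the leading coefficient of \(f\) lies in \(\m\) and so does \(H(0)\). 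This does not break the argument: only \(G(0)\in A^\times\) is needed (to renormalize \(g\) into a monic polynomial), and that follows from \(\ov{G(0)}=\ov{g_0^{\ast}(0)}=\ov{g_0(0)}^{-1}\neq 0\) (note \(\ov{g_0(0)}\,\ov{h_0(0)}=\ov{f(0)}\neq 0\)) together with \(A\) being residually discrete local; the cofactor \(h\) is not claimed to be monic, so \(H(0)\) is allowed to be a non-unit. With that correction the degree and residue checks go through as you describe, and the uniqueness transfers from Proposition~\ref{monicfactorlift} exactly as in the paper.
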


\begin{proof} If \(f(X)\) is monic, this is Proposition
\ref{monicfactorlift}.

If \(f(X)\) is not monic, then let \(d=\deg f\) and
\(p(X)=f(0)^{-1} X^d f(1/X)\). Let \(n=\deg g_0\) and
\(q_0(X)=X^n g_0(1/X)\). Then \(\ov q_0\) divides \(\ov p\),
which is monic; if we write \(\ov p=\ov q_0 \ov r_0\), we have
\(\ov r_0(X)=X^{d-n}\ov h_0(1/X)\), so that \(\gcd(\ov q_0,\ov
r_0)=1\). By Proposition \ref{monicfactorlift}, we find \(q,r \in
A[X]\) such that \(p=q r\) and \(\ov q=\ov q_0\).

Let \(g(X)=1/X^n q(X)\). Then \(g(X)\) divides \(f(X)\) and \(\ov
g=\ov g_0\). We let \(h\in A\) be such that \(f=g h\).

The unicity comes from the unicity in Proposition~\ref{monicfactorlift}.
\end{proof}

We drop the extra-hypothesis ``\(f(0)\in A^\times\)''.

\begin{proposition}\label{factor-non-monic} Let \(f,g_0,h_0\in A[X]\) such that
\(\ov f=\ov{g_0} \ov{h_0}\) with \(\gcd(\ov{g_0},\ov{h_0})=1\) and
\(g_0\) is monic.    There exist \(g,h\in A[X]\) with
\(g\) monic, such that \(f=g h\), \(\ov g=\ov{g_0}\) and \(\ov
h=\ov{h_0}\). Moreover this factorization lifting is unique.
\end{proposition}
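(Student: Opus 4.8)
The plan is to reduce the general (non-monic) statement to Lemma~\ref{IfInfinite}, where we already know how to factor under the extra hypothesis \(f(0)\in A^\times\). The obstacle is precisely that we cannot assume \(f(0)\in A^\times\): in general \(f(0)\in\m\), and the leading coefficient of \(f\) may also lie in \(\m\), so neither \(f\) nor its reciprocal polynomial is monic. The idea is to perform a translation of the variable to move to a situation where the reciprocal trick of Lemma~\ref{IfInfinite} applies, using that \(\k\) is discrete to locate a suitable residual shift.

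First I would pass to the residue field. Since \(\gcd(\ov{g_0},\ov{h_0})=1\) and \(\ov{g_0}\) is monic, we may write \(\ov f=\ov{g_0}\cdot\ov{h_0}\) in \(\k[X]\). Because \(\ov{g_0}\) is monic of degree \(n=\deg g_0\), and \(\k\) is a discrete field, I would like to find \(\gamma\in A\) such that, after the substitution \(X\mapsto X+\gamma\), the polynomial \(f(X+\gamma)\) has invertible value at \(0\), i.e. \(\ov f(\ov\gamma)\neq 0\) in \(\k\). This is possible exactly when \(\ov f\) is not the zero polynomial and \(\k\) has ``enough'' elements — which is not guaranteed for a finite residue field. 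So the genuinely delicate point is handling a small residue field: one enlarges \(\k\) harmlessly by adjoining a root of a suitable polynomial, or more precisely one uses a polynomial change of variable of the form \(X\mapsto X+Y\) over the polynomial ring \(A[Y]\), working in \(A[Y](X)\) or passing through the finite \(A\)-algebra \(A[y]=A[Y]\flr{\pi(Y)}\) for an appropriate monic \(\pi\) whose image in \(\k[Y]\) is irreducible of large degree, so that by Corollary~\ref{extension1} this algebra is again a Henselian residually discrete local ring with residue field a (finite) extension of \(\k\) in which \(\ov f\) acquires a non-root.

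Concretely, I would argue as follows. If \(\ov f\) has a non-root \(a\in\k\), lift \(\gamma\in A\) with \(\ov\gamma=a\), set \(f^\sharp(X)=f(X+\gamma)\), \(g_0^\sharp(X)=g_0(X+\gamma)\), \(h_0^\sharp(X)=h_0(X+\gamma)\); then \(\ov{f^\sharp}=\ov{g_0^\sharp}\cdot\ov{h_0^\sharp}\) with \(\gcd=1\), \(g_0^\sharp\) monic, and \(f^\sharp(0)=f(\gamma)\) is a unit since its image \(\ov f(a)\) is nonzero. Apply Lemma~\ref{IfInfinite} to get \(f^\sharp=g^\sharp\cdot h^\sharp\) with \(g^\sharp\) monic, \(\ov{g^\sharp}=\ov{g_0^\sharp}\), \(\ov{h^\sharp}=\ov{h_0^\sharp}\); then untranslate by \(X\mapsto X-\gamma\) to obtain the desired \(g=g^\sharp(X-\gamma)\), \(h=h^\sharp(X-\gamma)\), which are monic resp.\ of the right residues, and \(f=g\cdot h\). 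If instead \(\k\) is too small to guarantee a non-root of \(\ov f\), I would first base-change to the finite local Henselian \(A\)-algebra \(A'=A[y]=A[Y]\flr{\pi(Y)}\) described above (using Corollary~\ref{extension1} and Theorem~\ref{localImpliqueHenselian}), perform the factorization over \(A'\) by the argument just given, and then descend: uniqueness of the lifting over \(A'\) forces the Galois conjugates of the factors to coincide, hence the coefficients of \(g\) and \(h\) are fixed by the \(A\)-automorphisms of \(A'\) and therefore lie in \(A\) — here one invokes the faithful-flatness/units-reflecting machinery of Section~2 (Lemmas~\ref{fideleplat1},~\ref{fideleplat}) together with the idempotent-invariance results (Lemma~\ref{idemfix}) to carry out the descent constructively. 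Finally, uniqueness of the factorization over \(A\) is inherited from the uniqueness clause of Lemma~\ref{IfInfinite} (applied after translation, which is a bijection on factorizations) and from uniqueness of the lift under base change.

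The step I expect to be the main obstacle is precisely the small-residue-field case and the attendant descent argument: arranging the auxiliary extension \(A'\) so that it is still a residually discrete Henselian local ring, that \(\ov f\) genuinely gains a non-root in the larger residue field, and that the resulting factors really descend to \(A[X]\) without any classical (nonconstructive) appeal to splitting fields. Everything else — the reciprocal polynomial trick, the translations, and the bookkeeping of degrees and residues — is routine given Lemma~\ref{IfInfinite} and Proposition~\ref{monicfactorlift}.
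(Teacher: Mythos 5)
Your first case (find a residual non-root $a$ of $\ov f$, translate by a lift $\gamma$, apply Lemma~\ref{IfInfinite} to $f(X+\gamma)$, untranslate) is exactly the paper's argument, and your instinct that the small-residue-field case is the real difficulty is right. But your descent step there has a genuine gap. You factor $f$ over a single auxiliary Henselian extension $A'=A[Y]\flr{\pi(Y)}$ and then claim the factors descend to $A[X]$ because their coefficients are ``fixed by the $A$-automorphisms of $A'$'', citing Lemmas~\ref{fideleplat1}, \ref{fideleplat} and \ref{idemfix}. None of these does that job: faithful flatness gives injectivity and reflection of units, not a computation of the fixed ring; and Lemma~\ref{idemfix} concerns only \emph{idempotents} of the universal decomposition algebra invariant under $\S_n$. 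To make your route work you would have to (i) lift the residual automorphisms of $\k'/\k_0$ to $A$-automorphisms of $A'$ (plausible via Hensel root lifting, but not established in the paper), and (ii) prove constructively that the fixed ring of this action is exactly $A$ (resp.\ $B_0$) — a separate Galois-descent argument of roughly the same weight as the proposition itself. You also leave vague how one \emph{decides} constructively whether $\k$ is ``too small''; the paper makes this decidable by passing to the finitely generated subfield $\k_0$ spanned by the coefficients of $\ov{g_0},\ov{h_0}$: either one finds a non-root of $\ov f$ or one certifies that $\k_0$ is finite.

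The paper avoids descent altogether with a different trick: working over the Henselian subring $B_0\subseteq A$ with finite residue field $\k_0$, it constructs \emph{two} finite extensions $B_1=B_0[T_1]\flr{p_1}$ and $B_2=B_0[T_2]\flr{p_2}$ whose residue fields $\k_1,\k_2$ each contain a non-root of $\ov f$ and satisfy $\k_1\cap\k_2=\k_0$ (easy over a finite field, e.g.\ coprime degrees). Lemma~\ref{IfInfinite} gives a factorization in each $B_i[X]$; embedding both into $B\simeq B_1\otimes_{B_0}B_2$ and invoking the \emph{uniqueness} clause of Lemma~\ref{IfInfinite} forces the two factorizations to coincide, so the factors lie in $B_1[X]\cap B_2[X]=B_0[X]\subseteq A[X]$. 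This replaces your fixed-ring computation by a single application of uniqueness plus an intersection of free modules, which is the missing idea in your write-up.
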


\begin{proof} Assume first that the discrete residual field has at
least \(1+\deg \ov f\) elements. We have some \(a\in A\) such that \(f(a)\in A^\times\),
so we can apply Lemma \ref{IfInfinite} to \(f(X+a)\).

In the general case we consider the subfield $\k_0$ generated
by the coefficients of $\ov{g_0}$ and $\ov{h_0}$. Since $\k$ is discrete
we are able either to find an element $a\in A$ such that    \(f(a)\in A^\times\)
or to assert that $\k_0$ is finite. In this case, we consider the subring
$A_0$ generated by the coefficients of $g_0$ and $h_0$, we localize this ring at 
the prime $\m \cap A_0$, and 
we consider the henselian subring $B_0\subseteq A$ it
generates. The morphism
$B_0\to A$ is local and the residue field of $B_0$ is $\k_0$.
We construct two finite extensions $\k_1$ and $\k_2$ of $\k_0$, each one 
containing an element which is not a root of $\ov f$. Moreover
$\k_1\cap\k_2=\k_0$. Let $p_i\in A_0[T_i]$ $(i=1,2)$ be monic polynomials
such that 
$\k_i=\k_0[T_i]\flr{\ov{p_i}(T_i)}$. Let $B_i=B_0[T_i]\flr{p_i(T_i)}$. 
By Corollary \ref{extension1}, $B_1$ and $B_2$ are Henselian.
By Lemma \ref{IfInfinite} we get a factorization $f(X)=g_i(X) h_i(X)$ inside each $B_i[X]$.
We have 
$$
B_i\subset B= B_0[T_1,T_2]\flr{p_1(T_1),p_2(T_2)}\simeq B_1\otimes_{B_0}B_2,
$$  which is a free $B_0$-module of rank $\deg(p_1) \deg(p_2)$. 
Inside $B[X]$ we get (by
unicity in Lemma \ref{IfInfinite}) $g_1=g_2$ and $h_1=h_2$,
and $g_i(X),h_i(X)\in B_1[X]\cap B_2[X]=B_0[X]\subset A[X]$.    
\end{proof}


\section{Henselization and strict Henselization of a local ring}

\begin{context} \label{context6}
In Section 6, \(A\) is a residually discrete
local ring with maximal ideal \(\m\) and residual field \(\k\).
\end{context}

\subsection{The Henselization}

 In this section we construct the Henselization of $A$ as a direct limit
 of extensions of $A$ that are obtained by adding inductively Hensel roots of monic  polynomials.  
 
 This kind of construction works for two reasons: the first one is that we are
 able to make a ``simple'' extension in a universal way. The second one is that 
 the universal property of simple extensions allows us to give canonical isomorphisms between two ``multiple'' extensions. In conclusion the system of multiple extensions
that we construct is an inductive system and does have a direct limit.

\def\A#1{A_#1}
\def\Af{\A f}

\subsubsection{One step}

\begin{defi} Let \(f(X)=X^n+\cdots+ a_1 X+a_0\in A[X]\) a monic
polynomial with \(a_1\in A^\times\) and \(a_0\in\m\). Then we denote
by \(\Af\) the ring defined as follows: if \(B=A[x]=A[X]\flr{f(X)}\)
(where \(x\) is the class of \(X\) in the quotient ring), let
\(S\subseteq B\) be the multiplicative part of \(B\) defined by
\[S=\{ g(x)\in B \tq g(X)\in A[X],\ g(0)\in A^\times \}.\]
Then by definition \(\Af\) is \(B\) localized in \(S\), that is
\(\Af=S^{-1} B\).
\end{defi}

We fix a polynomial \(f(X)\in A[X]\) such as in the above definition.

\begin{lemma} \label{DeAaAf}
The ring \(\Af\) is a residually discrete local ring.  Its maximal
ideal is \(\m \Af\).  Its residual field is (canonically
isomorphic to) \(\k\).  It is faithfully flat over \(A\). In
particular we can identify \(A\) with its image in \(\Af\), and write
\(A\subseteq\Af\).
\end{lemma}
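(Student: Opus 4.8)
The plan is to analyze the structure of $\Af = S^{-1}B$ where $B = A[X]/\langle f(X)\rangle$ directly, using the special shape of $f$ (namely $a_1 \in A^\times$, $a_0 \in \m$, so that $\ov f(X) = X^{n}\dd+\ov{a_1}X = X(X^{n-1}\dd+\ov{a_1})$ has $0$ as a simple residual root). First I would identify the residue field: since $x$ is a root of $f$ and $\ov f$ has $0$ as a simple root, the element $x$ should map to $0$ in the residue field, and $B/\m B \cong \k[X]/\langle \ov f\rangle$ decomposes (as a $\k$-algebra) as a product with one factor equal to $\k$ coming from the simple root $0$; the localization at $S$ kills the other factor. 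Concretely, I would show $S^{-1}(B/\m B) \cong \k$: an element $g(x)$ with $g(0)\in A^\times$ becomes, modulo $\m$, $\ov g(\ov x)$ with $\ov g(0) \neq 0$; the ideal $\langle \ov x\rangle$ together with such invertible-at-$0$ elements generates the unit ideal's complement appropriately, forcing $B/\m B$ localized at $\ov S$ to collapse onto the residue factor $\k$. This gives a surjection $\Af \to \k$ whose kernel contains $\m\cdot\Af$ and $x$; I would check the kernel equals $\m\cdot\Af$.

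Next I would establish that $\Af$ is local with maximal ideal $\m\cdot\Af$. The key point: every element of $B$ is of the form $b = \sum_{i<n} c_i x^i$ with $c_i\in A$; its ``constant part'' is $c_0$. If $c_0 \in A^\times$ then $b$ is (up to an element of $\langle x\rangle$) a unit, but more to the point $b \in S$ when written suitably — actually I should argue: if $b \equiv c_0 \pmod{\langle x \rangle}$ with $c_0 \in A^\times$, then since $x \in \m\cdot\Af$ (because $x$ maps to $0$ in $\k$ and I will have shown $\ker(\Af\to\k) = \m\Af$), $b$ is a unit plus something in $\m\Af$, hence a unit in $\Af$. If instead $c_0 \in \m$ (using that $A$ is residually discrete, every $c_0\in A$ is either a unit or in $\m$), then $b \in \m\cdot\Af + \langle x\rangle \subseteq \m\cdot\Af$. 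To make this airtight I first need $x\in\m\cdot\Af$: this follows because $f(x)=0$ gives $x(x^{n-1}\dd+a_1) = -a_0 \in \m$, and $x^{n-1}\dd+a_1 \equiv a_1 \in A^\times \pmod{\langle x\rangle}$, so $x^{n-1}\dd+a_1$ is a unit in $\Af$ by the previous dichotomy applied to its constant part; hence $x \in \m\cdot\Af$. So every element of $\Af$ is either invertible or in $\m\cdot\Af$, proving $\Af$ is local and residually discrete with $\m_{\Af} = \m\cdot\Af$ and residue field $\k$.

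For faithful flatness: flatness of $\Af$ over $A$ follows since $B$ is free (hence flat) over $A$ of rank $n$, and localization $B \to S^{-1}B$ is flat (Example after the flatness definition), and composition of flat morphisms is flat. Since $\Af$ is local and $A\to\Af$ is flat, by Lemma \ref{fideleplat1} it suffices that $\varphi$ reflects units, equivalently (Lemma \ref{fideleplat}) that it is a local morphism. But I have just shown $\m_{\Af} = \m\cdot\Af$, and a ring map from a residually discrete local ring to a residually discrete local ring sending $\m_A$ into $\m_{\Af}$ is local — this is exactly the converse remark recorded right after the definition of a local morphism. So $A\to\Af$ is faithfully flat, in particular injective, and we may write $A\subseteq\Af$.

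The main obstacle I anticipate is the bookkeeping in showing $\ker(\Af\to\k) = \m\cdot\Af$ and, intertwined with it, that $x\in\m\cdot\Af$ — these are mutually reinforcing and must be sequenced carefully so the argument is not circular. The cleanest route is probably: (i) define the map $\Af\to\k$ by $g(x)/s(x)\mapsto \ov g(0)/\ov s(0)$ (well-defined because $s(0)\in A^\times$ forces $\ov s(0)\neq 0$, and because $f(0)=a_0\in\m$ so $\ov f(0)=0$ makes the relation $f(x)=0$ consistent with $x\mapsto 0$), (ii) note $x$ is in the kernel, (iii) show an element with invertible constant part is a unit in $\Af$ (directly: if $g(0)\in A^\times$ then $g(x)\in S$ so $g(x)$ is already invertible in $\Af$!), which immediately handles the dichotomy without needing $x\in\m\Af$ first, (iv) deduce $x^{n-1}\dd+a_1$ is a unit (its constant term is $a_1\in A^\times$), hence $x = -a_0(x^{n-1}\dd+a_1)^{-1}\in \m\cdot\Af$, (v) conclude every element is a unit or in $\m\cdot\Af$ and that the kernel is exactly $\m\cdot\Af$.
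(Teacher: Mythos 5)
Your proposal is correct and, once you discard the initial detour through the product decomposition of $B/\m\cdot B$ in favour of the direct argument in your final paragraph, it is essentially the paper's own proof: the same dichotomy on the constant coefficient of the numerator (unit in $S$ versus in $\m$), the same identity $x\cdot(x^{n-1}+a_{n-1}x^{n-2}+\cdots+a_1)=-a_0$ to get $x\in\m\cdot\Af$, the same residue map $r(x)/s(x)\mapsto\ov{r(0)}/\ov{s(0)}$, and the same route to faithful flatness via flatness of free modules and localizations together with Lemma \ref{fideleplat}. The only detail to flesh out in a final write-up is the well-definedness of the residue map, which the paper checks explicitly from an equality of fractions in $\Af$.
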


\begin{proof} 
Since \(\Af\) is a localization of an algebra which is a free
\(A\)-module, \(\Af\) is flat over \(A\).  The elements of \(\Af\) can
be written formally as fractions \(r(x)/s(x)\) with \(r,s\in A[X]\),
\(s(0)\in A^\times\), \(r(x),s(x)\in B\).  Consider an arbitrary
\(a=r(x)/s(x)\in\Af\).  To prove that \(\Af\) is local and residually
discrete, we show that \(a\in\Af^\times\) or \(a\in\Rad(\Af)\).  If
\(r(0)\in A^\times\), then \(a\in\Af^\times\); if \(r(0)\in \m_{A}\), then
consider an arbitrary \(b=q(x)/s'(x)\in\Af\), we have
\(1+a b=(s(x) s'(x)+r(x) q(x))/(s(x)\cdot
s'(x))=p(x)/v(x)\) and \(p(0)\in A^\times\) so \(1+ab\in \Af^\times\),
and we are done.

We have shown that the morphism \(A\to\Af\) is local, so \(\Af\) is
faithfully flat over \(A\) (see lemma \ref{fideleplat}) and we
consider \(A\) as a subring of \(\Af\).

We have also shown that \(\m_{\Af}\) is the set of \(r(x)/s(x)\) with
\(r(0)\in\m\) (in particular \(\m\subseteq\m_{\Af}\)) and
\(\Af^\times\) is the set of \(r(x)/s(x)\) with \(r(0)\in A^\times\).
So in order to see that \(\m_{\Af}=\m \Af\) it is sufficient to
show that \(x/1\in \m \Af\).  Let
\[  y = x^{n-1} + a_{n-1} x^{n-2}+\cdots+ a_2 x + a_1 \]
We have \(y\in\Af^\times\), and \(y x=-a_0\), so \(x=
-a_0 y^{-1}\in\m \Af\).

An equality \(r(x)/s(x)=q(x)/u(x)\in \Af\) means an equality
\[v(X) (s(X) q(X)-u(X) r(X))\in \la f(X)\ra\]
in \(A[X]\) with \(v(0)\in A^{\times}\) and this implies that
\(s(0)q(0)-u(0)r(0)\in\m\). We deduce that the map \(\Af \ni
r(x)/s(x)\mapsto \ov {r(0)/s(0)}\in \k\) is a well defined ring
morphism. As its kernel is \(\m_{\Af}\) we obtain that the residual
field of \(\Af\) is canonically isomorphic to
\(\k\).
\end{proof}

In the following, as we did at the end of the proof, we denote \(x\)
for the element \(x/1\) of \(\Af\). It is a zero of \(f\) in
\(\m_{\Af}\).  But we note that \(A[x/1]\) as a subring of \(\Af\)
is a quotient of \(B=A[x]\).

\setlength{\unitlength}{1pt}
\def\hdarrow{{\begin{picture}(0,20)\put(2,16){\oval(4,4)[t]}
\put(0,16){\vector(0,-1){18}}
\end{picture}}}
\def\lnearrow_#1{{\begin{picture}(20,20)\put(0,0){\vector(3,2){20}}
\put(10,0){\(\sc #1\)}
\end{picture}}}
\def\larrow{\mathop{\begin{picture}(20,4)\put(0,2){\vector(1,0){20}}
\end{picture}}\limits}

\begin{lemma}[context \ref{context6}] \label{uniquembedding}
Let \(B,\m_B\) be a local ring and \(\phi:A\dans B\) a local
morphism. \\
Let \(f(X)=X^n+\cdots+a_1 X+a_0\in A[X]\) be a monic
polynomial with \(a_1\in A^\times\) and \(a_0\in\m\).
\\
If \(\phi(f)=X^n+\cdots+\phi(a_1) X+\phi(a_0)\in B[X]\) has a root
\(\xi\) in \(\m_B\), then there exists a unique local morphism
\(\psi:\Af\dans B\) such that \(\psi(x)=\xi\) and the following
diagram commutes:
\[\begin{matrix} A,\m & \larrow^\phi & B,\m_B\cr
        \hdarrow & \lnearrow_\psi \cr \Af,\m \Af
        \cr\end{matrix}\]
\end{lemma}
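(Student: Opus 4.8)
The plan is to exploit the concrete description of $\Af$ as a localization of $B = A[X]\flr{f(X)}$ worked out in Lemma \ref{DeAaAf}. An element of $\Af$ is a fraction $r(x)/s(x)$ with $r,s\in A[X]$ and $s(0)\in A^\times$, and we showed there that such a fraction lies in $\m_{\Af}$ iff $r(0)\in\m$ and in $\Af^\times$ iff $r(0)\in A^\times$. So to define $\psi$ I would first define a ring morphism $B=A[x]\to B$ (the target local ring) by sending $x$ to the root $\xi\in\m_B$; this is legitimate because $\phi(f)(\xi)=0$, so the substitution $X\mapsto\xi$ kills the ideal $\la f(X)\ra$ and factors through $B=A[X]\flr{f(X)}$. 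Call this morphism $\psi_0$.

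Next I would check that $\psi_0$ sends every element of the multiplicative set $S=\{g(x)\tq g\in A[X],\ g(0)\in A^\times\}$ to a unit of the target. Indeed, for $g(x)\in S$ we have $\psi_0(g(x))=\phi(g)(\xi)$, and modulo $\m_B$ this equals $\ov{\phi(g(0))}$. Since $\phi$ is local, $\phi(g(0))\in B^\times$ (as $g(0)\in A^\times$ and a local morphism reflects units — in fact sends units to units), hence $\phi(g)(\xi)\equiv\phi(g(0))\not\equiv 0\pmod{\m_B}$, so $\phi(g)(\xi)$ is a unit of the local ring $B$. By the universal property of localization, $\psi_0$ extends uniquely to a ring morphism $\psi:\Af=S^{-1}B\to B$ with $\psi(r(x)/s(x))=\phi(r)(\xi)/\phi(s)(\xi)$, and in particular $\psi(x)=\xi$. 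The commutativity of the diagram is immediate: on $A$, $\psi$ restricts to $\psi_0\circ(\text{inclusion }A\hookrightarrow B)=\phi$.

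It remains to see that $\psi$ is a \emph{local} morphism, i.e. reflects units. Take $a=r(x)/s(x)\in\Af$ with $\psi(a)\in B^\times$. If $r(0)\in A^\times$ then $a\in\Af^\times$ by Lemma \ref{DeAaAf} and we are done. Otherwise $r(0)\in\m$ (using that $\k$ is residually discrete — $A$ is residually discrete local, so we have the alternative $r(0)\in A^\times$ or $r(0)\in\m$), and then $\psi(a)=\phi(r)(\xi)/\phi(s)(\xi)$ has numerator $\phi(r)(\xi)\equiv\ov{\phi(r(0))}=0\pmod{\m_B}$ since $r(0)\in\m$ and $\phi$ is local (so $\phi(\m)\subseteq\m_B$); hence $\psi(a)\in\m_B$, contradicting $\psi(a)\in B^\times$. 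Therefore $r(0)\in A^\times$ in all cases and $a\in\Af^\times$, so $\psi$ reflects units.

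Finally, uniqueness: any local morphism $\psi':\Af\to B$ lifting $\phi$ with $\psi'(x)=\xi$ must agree with $\psi$ on the image of $B=A[x]$ (it is determined by its values on $A$ and on $x$), and since it is a ring morphism out of the localization $S^{-1}B$ it is forced on all of $\Af$ — for $g(x)\in S$, $\psi'(g(x)^{-1})=\psi'(g(x))^{-1}=\phi(g)(\xi)^{-1}$. Hence $\psi'=\psi$. The only mild subtlety — the step I'd flag as the place to be careful — is the verification that $\psi_0$ sends $S$ into $B^\times$, which is exactly where locality of $\phi$ and the residual-discreteness set-up are used; everything else is the universal property of localization together with the explicit computation of $\m_{\Af}$ and $\Af^\times$ from Lemma \ref{DeAaAf}.
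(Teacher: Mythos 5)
Your proof is correct and fills in exactly the argument the paper leaves implicit (its entire proof is ``\(\Af\) has been constructed exactly for this purpose''): define the evaluation \(x\mapsto\xi\) on \(A[X]\flr{f}\), check via locality of \(\phi\) and \(\xi\in\m_B\) that the multiplicative set \(S\) maps to units, invoke the universal property of localization, and verify locality and uniqueness using the description of \(\m_{\Af}\) and \(\Af^\times\) from Lemma~\ref{DeAaAf}. No gaps; the use of residual discreteness of \(A\) for the dichotomy \(r(0)\in A^\times\) or \(r(0)\in\m\) is legitimate since that is the standing hypothesis of the section.
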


\begin{proof} \(\Af\) has been constructed exactly for this purpose.
\end{proof}

\subsubsection{An inductive definition}

\def\S{\mathcal{S}}

We now define an inductive system. Let \(\S\) be the smallest family
of local rings \((B,\m B)\) such that
\begin{enumerate}
\item \((A,\m)\in\S\);
\item if \((B,\m_B)\in\S\), \(f(X)=X^n+\cdots+a_1 X+a_0\in B[X]\) with \(a_1\in
B^\times\) and \(a_0\in\m_B\), then \((B_f,\m_{B_f})\) is in~\(\S\).
\end{enumerate}

Now we see that \(\S\) is an inductive system. The ring \(A\) is
canonically embedded in each local ring \((B,\m_B)\) in \(\S\), and
\(\m_B=\m B\). In a similar way, every local ring in \(\S\) is
canonically embedded in the ones which are constructed from it.

Given two elements \((B,\m_B)\) and \((C,\m_C)\) in \(\S\), the first
one is constructed by adding Hensel roots of successive polynomials
\(f_{1},\ldots ,f_{k}\) in successive extensions, the second one is
constructed by adding Hensel roots of successive polynomials
\(g_{1},\ldots ,g_{\ell}\) in successive extensions.  Now we can add
successively the Hensel roots of polynomials \(f_{1},\ldots ,f_{k}\)
to \(C\) and add successively the Hensel roots of polynomials
\(g_{1},\ldots ,g_{\ell}\) to \(B\).  It is easy to see that the
extension \(C'\) of \(C\) and the extension \(B'\) of \(B\) we have
constructed are canonically isomorphic.  
So we have a filtered inductive system all of whose morphisms are 
injective
and the inductive limit is a local ring that ``contains'' all
the elements of \(\S\) as subrings.

This kind of machinery always works when we have the property of
``unique embedding'' described in Lemma \ref{uniquembedding}. A similar
example is given by the construction of the real closure of an ordered
field (see \eg \cite{LR91}).

So we can define the {\em Henselization} of \(A\) by
\[A^h = {\lim_{\longrightarrow}}_{\>B\in\S} B. \]

We have the following theorem.

\begin{theorem}[context \ref{context6}] \label{henselization}
The ring \(A^h\) is a Henselian local ring with maximal ideal
\(\m A_h\). If \((B,\m_B)\) is a Henselian local ring and
\(\phi:A\dans B\) is a local morphism then there exists a 
unique local morphism \(\psi\) such that the following diagram
commutes:
\[\begin{matrix} A,\m & \larrow^\phi & B,\m_B\cr
        \hdarrow & \lnearrow_\psi \cr
            A^h,\m A^h \cr\end{matrix}\]
\end{theorem}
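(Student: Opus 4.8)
The plan is to establish the two assertions of Theorem~\ref{henselization} separately: first that $A^h$ is Henselian with maximal ideal $\m\cdot A^h$, and then the universal property. For the first part, I would work directly from the construction as a filtered inductive limit. A monic polynomial $f(X)=X^n\dd+c_1\cdot X+c_0$ over $A^h$ with $c_1\in (A^h)^\times$ and $c_0\in\m\cdot A^h$ has, by finiteness of the data and the filtered nature of $\S$, all its coefficients already in some $B\in\S$; I would check that one can arrange $c_1\in B^\times$ and $c_0\in\m_B$ (using Lemma~\ref{DeAaAf}, each $B$ is a residually discrete local ring with $\m_B=\m\cdot B$, and the conditions on $c_0,c_1$ are residual conditions detected in $\k$, hence stable when descending to a large enough $B$). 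Then $B_f\in\S$ by construction, and the Hensel root $x\in\m_{B_f}\subseteq\m\cdot A^h$ of $f$ exists inside $A^h$. One must also verify $A^h$ is local with maximal ideal $\m\cdot A^h$: localness passes to filtered limits of local rings with local transition maps, and since each $\m_B=\m\cdot B$, the union of the $\m_B$ is $\m\cdot A^h$, which one checks is a proper ideal whose complement consists of units (again using that each transition morphism reflects units, by Lemma~\ref{DeAaAf} and Lemma~\ref{fideleplat}).

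For the universal property, given a Henselian local ring $(B,\m_B)$ and a local morphism $\phi:A\dans B$, I would construct $\psi:A^h\dans B$ by the standard inductive limit argument: it suffices to define compatible local morphisms $\psi_C:C\dans B$ for every $C\in\S$ and check they cohere. This is done by induction on the construction of $C$: the base case is $\psi_A=\phi$; and if $C=C'_{\,g}$ for some $C'\in\S$ and monic $g(X)=X^n\dd+a_1X+a_0\in C'[X]$ with $a_1\in (C')^\times$, $a_0\in\m_{C'}$, then applying the already-constructed $\psi_{C'}:C'\dans B$ we get $\psi_{C'}(g)\in B[X]$ with leading-to-constant-term data satisfying the hypotheses of Definition~\ref{defHensel} (the conditions are preserved because $\psi_{C'}$ is local), so since $B$ is Henselian $\psi_{C'}(g)$ has a root $\xi\in\m_B$; Lemma~\ref{uniquembedding} then yields a unique local $\psi_C:C=C'_{\,g}\dans B$ extending $\psi_{C'}$ with $\psi_C(x)=\xi$. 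Passing to the limit gives $\psi:A^h\dans B$, and $\psi$ is local because a filtered colimit of local morphisms reflecting units still reflects units. Uniqueness of $\psi$ follows from the uniqueness clause in Lemma~\ref{uniquembedding} applied at each stage, together with the fact that $A^h$ is generated over $A$ by the Hensel roots.

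The main obstacle I anticipate is the coherence bookkeeping in the inductive limit: one must be careful that the morphisms $\psi_C$ constructed at different stages of $\S$ agree on overlaps, and here the key point is exactly the uniqueness in Lemma~\ref{uniquembedding} — because the root $\xi$ lifting a given residual simple root is unique (Corollary~\ref{hensel-roots} / Proposition~\ref{hensel-nonmonic}), any two a priori different routes to the same element of $\S$ produce the same morphism to $B$. I would want to invoke the remark in the paper that ``this kind of machinery always works when we have the property of unique embedding,'' citing the analogous real-closure construction, rather than re-deriving the colimit formalism in detail. A secondary technical point to handle cleanly is verifying that $c_0\in\m\cdot A^h$ and $c_1\in(A^h)^\times$ actually descend to the corresponding conditions over a finite stage $B$; this uses residual discreteness and that $\k$ is the common residue field of all rings in $\S$, so the conditions ``$\ov{c_0}=0$'' and ``$\ov{c_1}\neq 0$'' are detected identically at every level.
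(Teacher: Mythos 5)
Your proposal is correct and is precisely the intended argument: the paper's proof is the single line ``Induction on the family $\S$,'' relying on the machinery set up just before the theorem (Lemmas~\ref{DeAaAf} and~\ref{uniquembedding} and the discussion of the filtered inductive system), and your write-up is a faithful, more detailed elaboration of exactly that induction. No gaps; the points you flag (descending the conditions on $c_0,c_1$ to a finite stage via locality of the transition maps, and coherence via the uniqueness clause of Lemma~\ref{uniquembedding}) are the right ones and are handled correctly.
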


\begin{proof} Induction on the family \(\S\).
\end{proof}

\subsection{The strict Henselization}

\noindent
A ring is called a \emph{strict Henselian local ring} when it is 
local Henselian and the residue field is separably closed.

We want to construct a  
strict Henselian local ring associated to $A$ satisfying a universal property
similar to that given in Theorem \ref{henselization}.

We give only the sketch of the construction, which is very similar to the Henselization.

Moreover, we will assume that a separable closure of the residual field is given.

We have at the bottom the Henselization \(A^h\) of \(A\).
We need to construct a natural extension of \(A^h\)
having as residual field a separable closure of $\k$. 
  
\subsubsection{One step}

Using Corollary \ref{extension1} we can make some ``One step'' part of
the strict Henselization when we know an irreducible separable
polynomial  \(f(T)\) in \(\k[T]\). 
Consider the finite separable extension $\k[t]=\k[T]\flr{f(T)}$ of $\k$.

If \(F(U)\in A[U]\) gives \(f(U)\) modulo \(\m\) we consider the quotient
algebra \(A^{(F)}=A^h[u]=A^h[U]\flr{F(U)}\).  
By Corollary \ref{extension1}  we know that it is an
Henselian local ring with residue field
\(\k[t]\). More precisely it is a universal object of this kind,
as expressed in the following lemma.

\begin{lemma}\label{uniquestrict} Let $\varphi: A\to B$ be a local morphism
where $B$ is Henselian with residue field $\gl$. Assume that $f(T)$
has a root $t'$ in $\gl$ through the residual
map $\k\to\gl$. Then there exists a unique local morphism $A^{(F)}\to B$
which maps residually $t$ on $t'$. 
\end{lemma}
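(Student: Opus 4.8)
The plan is to reduce the statement to the universal property of the simple extension over $A^h$, which is already packaged in Lemma~\ref{uniquembedding} for "Hensel root" extensions, combined with the structure of $A^{(F)}=A^h[U]\flr{F(U)}$ and the fact (Corollary~\ref{extension1}) that it is a local Henselian ring with residue field $\k[t]$. First, observe that a local morphism $\varphi\colon A\to B$ with $B$ Henselian factors uniquely through $A^h$ by Theorem~\ref{henselization}; call the induced local morphism $\varphi^h\colon A^h\to B$. So it suffices to construct a unique local morphism $\psi\colon A^{(F)}\to B$ over $A^h$ sending the class $u$ of $U$ to a lift $\xi\in B$ of $t'$. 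We are given a root $t'\in\gl$ of $f(T)$ via the residue map $\k\to\gl$; since $B$ is Henselian and $\ov{F}=f$ has the simple root $t'$ in $\gl$ (simple because $f$ is separable, hence $f'(t')\neq 0$), Corollary~\ref{hensel-roots} applied to $B$ provides a unique $\xi\in B$ with $F(\xi)=0$ and $\ov\xi=t'$.

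Next, the existence of $\psi$: because $A^{(F)}=A^h[U]\flr{F(U)}$ is the quotient of the polynomial ring by $F(U)$, and $F(\xi)=0$ in $B$, the assignment $U\mapsto\xi$ together with $\varphi^h$ on $A^h$ determines a well-defined ring morphism $\psi\colon A^{(F)}\to B$ over $A^h$ with $\psi(u)=\xi$. It remains to check that $\psi$ is a \emph{local} morphism, i.e.\ reflects units. Since $A^{(F)}$ and $B$ are both Henselian local, by Proposition~\ref{radical}~(4) they are residually discrete, so it is enough (by the remark after Definition~"local morphism of local rings") to check $\psi(\m_{A^{(F)}})\subseteq\m_B$; but $\m_{A^{(F)}}=\m\cdot A^{(F)}$ by Corollary~\ref{extension1}, and $\psi(\m\cdot A^{(F)})=\varphi(\m)\cdot B\subseteq\m_B$ since $\varphi$ is local. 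Hence $\psi$ is local.

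For uniqueness: any local morphism $\psi'\colon A^{(F)}\to B$ mapping $t$ residually to $t'$ restricts on $A^h\subseteq A^{(F)}$ to a local morphism $A^h\to B$ lying over $A\to B$, which must equal $\varphi^h$ by the uniqueness in Theorem~\ref{henselization}. Then $\psi'(u)$ is a root of $F$ in $B$ whose residue is $t'$, so by the uniqueness in Corollary~\ref{hensel-roots} we get $\psi'(u)=\xi=\psi(u)$; since $A^{(F)}$ is generated over $A^h$ by $u$, this forces $\psi'=\psi$.

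I do not expect a serious obstacle here: the content is entirely in the already-established universal properties (Theorem~\ref{henselization}, Corollary~\ref{extension1}, Corollary~\ref{hensel-roots}). The only point needing a little care is the verification that $\psi$ reflects units, i.e.\ that the evident algebra map is a local morphism of local rings; this is handled by the residual-discreteness reduction above, so it amounts to a one-line check that $\psi$ carries $\m\cdot A^{(F)}$ into $\m_B$. Thus the proof is essentially: factor through $A^h$, lift $t'$ to $\xi$ by Henselianity of $B$, send $u\mapsto\xi$, and invoke uniqueness of both the factorization through $A^h$ and the Hensel lift.
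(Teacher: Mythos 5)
Your proof is correct and follows exactly the route the paper intends (the paper in fact leaves this lemma without an explicit proof, treating it as the evident consequence of Theorem~\ref{henselization}, Corollary~\ref{extension1} and Corollary~\ref{hensel-roots}): factor $\varphi$ through $A^h$, lift the simple residual root $t'$ of the separable $f$ to a root $\xi$ of $F$ in $B$ by Henselianity, send $u\mapsto\xi$, and get uniqueness from the two uniqueness statements. The only cosmetic slip is citing Proposition~\ref{radical}~(4) for the residual discreteness of $B$ (it applies to finite algebras, hence to $A^{(F)}$ but not to the arbitrary Henselian $B$); this is harmless since only the residual discreteness of the source $A^{(F)}$ is actually needed to pass from $\psi(\m_{A^{(F)}})\subseteq\m_B$ to $\psi$ being local.
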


If \(F_{1}\in
A[V]\) gives also \(f(V)\) modulo \(\m\) let us call \(v\) the class of
\(V\) in \(A^{(F_1)}\). Lemma \ref{uniquestrict} 
implies that \(A^{(F)}=A^h[u]\)  and \(A^{(F_{1})}=A^h[v]\) are  canonically isomorphic: there is a root $u'$ of $F$ in  \(A^{(F_{1})}\) residually equal to $t$, and the isomorphism maps $u$ on $u'$.

In a similar way if \(x\in\k[z]\subseteq \k^{sep}\), \(x=p(z)\), and
\(G[T]\) is a polynomial giving modulo \(\m\) the minimal polynomial
of \(z\) we will have a canonical embedding of \(A^{(F)}\) in
\(A^{(G)}\) if we impose the condition that
\(P(\zeta)-\xi\in\m_{A^{(G)}}\) (here \(\zeta\) is the class of \(T\) 
in \(A^{(G)}\), and \(P\) is a polynomial giving \(p\) modulo \(\m\)).

\subsubsection{An inductive definition}

In order to have a construction of the strict Henselization 
as a usual ``static'' object we need a
\emph{separable closure} of \(\k\), \ie a discrete field
\(\k^{sep}\) containing \(\k\) with the following properties:
\begin{enumerate}
\item Every element \(x\in\k^{sep}\) is annihilated by an irreducible
separable polynomial in \(\k[T]\).
\item Every separable polynomial in \(\k[T]\) decomposes in linear
factors over~\(\k^{sep}\). 
\end{enumerate}

In that case we can define the strict Henselization through a new
inductive system, which is defined in a natural way from the
inductive system of finite subextensions of  \(\k^{sep}\).
We iterate the ``one step'' construction. The correctness of the glueing
of the corresponding extensions of \(A^h\) is obtained through 
Lemma~\ref{uniquestrict}.


\newpage

\small
\bibliographystyle{apalike}

\tableofcontents

\end{document}